\documentclass[12pt]{amsart}
\usepackage[a4paper,margin=45pt]{geometry}
\usepackage{amsmath,amssymb,amsthm}
\usepackage{enumitem}
\usepackage{xcolor}
\usepackage{times}
\usepackage[foot]{amsaddr}
\usepackage{mathrsfs}
\usepackage{yhmath}
\usepackage{hyperref}
\hypersetup{
  colorlinks=true,
  linkcolor=purple,
  citecolor=blue,
  urlcolor=blue
}
\numberwithin{equation}{section}
\usepackage{comment}
\usepackage{tikz}
\usetikzlibrary{arrows.meta,decorations.markings}

\definecolor{lime}{HTML}{A6CE39}
\DeclareRobustCommand{\orcidicon}{%
	\begin{tikzpicture}
	\draw[lime, fill=lime] (0,0) 
	circle [radius=0.16] 
	node[white] {{\sffamily \tiny ID}};
	\draw[white, fill=white] (-0.0625,0.095) 
	circle [radius=0.007];
	\end{tikzpicture}
	\hspace{-2mm}
}
\foreach \x in {A, ..., Z}{%
	\expandafter\xdef\csname orcid\x\endcsname{\noexpand\href{https://orcid.org/\csname orcidauthor\x\endcsname}{\noexpand\orcidicon}}
}

\newtheorem{theorem}{Theorem}[section]
\newtheorem{proposition}[theorem]{Proposition}
\newtheorem{lemma}[theorem]{Lemma}

\newcommand{\Z}{\mathbb Z}
\newcommand{\R}{\mathbb R}
\newcommand{\E}{\mathbb E}
\renewcommand{\P}{\mathbb P}
\newcommand{\1}{\mathbf 1}
\newcommand{\F}{\mathcal F}
\newcommand{\Rcal}{\mathcal R}
\newcommand{\abs}[1]{\left\lvert #1\right\rvert}
\newcommand{\norm}[1]{\left\lVert #1\right\rVert}
\newcommand{\rmd}{\,\mathrm d}
\renewcommand{\MR}[1]{
  \href{https://mathscinet.ams.org/mathscinet-getitem?mr=#1}{MR#1}
}
\newcommand{\arXiv}[1]{
  \href{https://arxiv.org/abs/#1}{arXiv:#1}
}

\title{Shape theorem for excited random walks on the integer lattice}
\author[Tuan-Minh Nguyen]{Tuan-Minh Nguyen\orcidA{}}
\address[T.-M. N]{School of Mathematics, Monash University, 9 Rainforest Walk, Clayton 3800, Victoria, Australia}
\email{tuanminh.nguyen@monash.edu}

\begin{document}

\begin{abstract}
We study the limit shape of the range of the random walk excited to the center on $\Z^d$, with $d\geq2$. A cookie is placed at each nonzero vertex. On its first visit to such a vertex, the walker consumes the cookie and takes its next nearest-neighbor step with a bias toward the origin. On a visit to the origin, or on later visits to a nonzero vertex, no cookie is available and the walker moves to a neighbor uniformly at random. We prove that the set of vertices visited up to time $n$, rescaled by $n^{-1/(d+1)}$, converges in probability in Hausdorff distance to an explicit $\ell^1$ ball. This answers a question raised by Kozma (2007).
\end{abstract}
\keywords{excited random walks, self-interacting random walks, limit shape theorem}
\subjclass{60J45, 60G17, 05C81, 82B41}
\maketitle

\section{Introduction}
\subsection{Description of the model}
We study the \emph{random walk excited to the center} on $\Z^d$, with $d\geq2$. First mentioned by Kozma (see \cite[Problem session]{CZ2007} and \cite[Section~2]{Kozma}), it is a multidimensional variant of the \textit{excited random walk} introduced by Benjamini and Wilson~\cite{BW2003}. We describe its transition rule by placing one cookie at each nonzero vertex. On its first visit to a nonzero vertex, it consumes the cookie there and chooses its next nearest-neighbor step randomly with a bias toward the origin. When it visits a vertex where no cookie is available, i.e., on any visit to the origin or a subsequent visit to a nonzero vertex, it chooses uniformly among its $2d$ nearest neighbors.

Fix $\theta\in(0,1]$. The random walk excited to the center $\mathbf{X}=(X_n)_{n\ge 0}$ is formally defined as follows. It is a nearest-neighbor random walk taking values in $\Z^d$ with initial position $X_0=0$.  For $x=(x_1,\cdots, x_d)\in\Z^d$, let $\|x\|_{1}:=\sum_{i=1}^d |x_i|$ be the $\ell_1$-norm of $x$ and let 
\begin{align*}
  \kappa(x):={\rm card}\{1\leq i\leq d:x_i\neq0\}
\end{align*}
denote the number of nonzero coordinates of $x$. For $x,y\in \Z^d$, let
\begin{align*}
P(x,y):=\frac1{2d}\mathbf 1_{\{x\sim y\}}
\end{align*}
be the transition probability matrix of the \textit{simple symmetric random walk} on $\Z^d$. For $x\in \Z^d\setminus\{0\}$ and $y\in \Z^d$, define
\begin{align*}
K(x,y):= \begin{cases}
\dfrac{1-\theta}{2d}+\dfrac{\theta}{\kappa(x)} &\text{if }\, x\sim y\text{ and } \norm{y}_1=\norm{x}_1-1,\\ 
\dfrac{1-\theta}{2d} &\text{if }\, x\sim y \text{ and } \norm{y}_1=\norm{x}_1+1,\\
0, &\text{otherwise},
\end{cases}
\end{align*}
and set $K(0,y):=P(0,y)$. Then $K$ is the transition probability matrix of the \textit{random walk biased toward the origin}: from a nonzero vertex $x$, with probability $\theta$, it moves uniformly to one of its $\kappa(x)$ neighbors closer to the origin; with probability $1-\theta$, it moves uniformly to any of its $2d$ neighbors.
Let
\begin{align*}
 \ell_n(x):=\sum_{j=0}^{n-1}\1_{\{X_j=x\}}
\end{align*}
be the number of visits to vertex $x$ before time $n$. Conditionally on $\mathcal{F}_n:=\sigma(X_0,X_1,\ldots,X_n)$, the transition probability of the walk is given by
\begin{align}
 \mathbb{P}\!\left(X_{n+1}=y\mid\F_n\right)=\begin{cases}
 K(X_n,y) &\text{if }\, \ell_n(X_n)=0\text{ and } X_n\neq0,\\
 P(X_n,y) &\text{if }\, \ell_n(X_n)\geq1 \text{ or } X_n=0,
 \end{cases}
\label{eq:walk-transition}
\end{align}
for $y\in\Z^d$. In other words, on the first visit to a nonzero vertex $x$, it behaves as the random walk biased toward the origin. On a visit to the origin or later visits to a nonzero vertex, it behaves as the simple symmetric random walk.

Kozma \cite[Problem session]{CZ2007} conjectured that the walk exhibits a limit shape. Dembo, Groisman, Huang, and Sidoravicius~\cite[Sections~1 and~4]{DGHS} also refer to this process as an \emph{origin-excited random walk} and prove an averaging principle for a class of random growth models motivated by this process and the once-reinforced random walk.

Considerable attention has been devoted to one-dimensional nearest-neighbor excited random walks, where remarkable phase transitions have been established for their asymptotic behavior. These include criteria for recurrence/transience \cite{ABO, Z2005} and non-ballisticity/ballisticity \cite{BasS1, BasS2}, together with characterizations of the limit distribution in several specific regimes; see \cite{DK2012, KM2011, KZ2013, P2012, KP2017, KMP2022} for further results. One-dimensional excited random walks with non-nearest-neighbor steps have also been studied recently in \cite{DP2017, CHN2021, N2022}. In higher dimensions, results on recurrence, transience and speed of excited random walks on $\Z^d$ can be found in \cite{BW2003, Z2006, ABK2008, Hofstad2010, Menshikov2012, AHR2023}, while excited random walks on trees were studied in \cite{V2003, BS2009, LN}. For a literature review, we refer the reader to \cite{KZ2013}.

\subsection{Main result}

\begin{figure}[ht]
 \centering
 \includegraphics[scale=0.8]{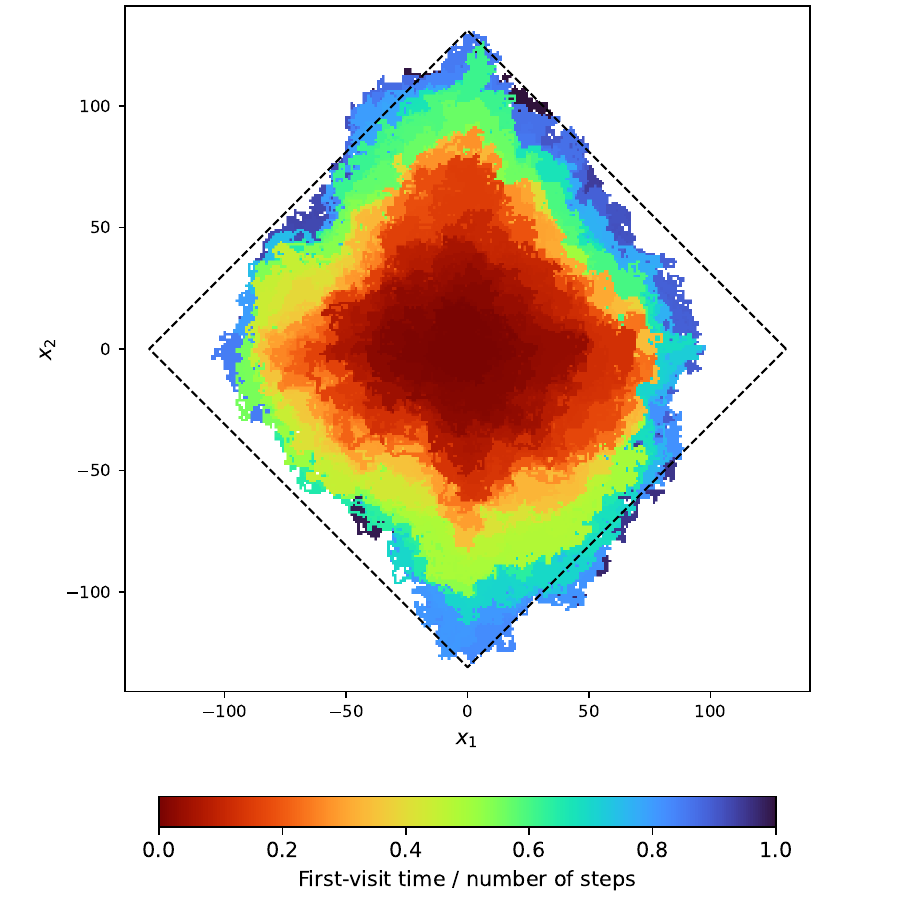}
 \caption{Simulation of the random walk excited to the center on
 $\Z^2$ with $\theta=0.5$ after $n=1{,}500{,}000$ steps.
 Colors indicate the first-visit time divided by $n$.
 The dashed curve is the boundary of the predicted $\ell^1$ ball
 of radius $(3n/(4\theta))^{1/3}$.}
 \label{fig:walk}
\end{figure}

Let
\begin{align*}
 \Rcal_n:=\{X_0,\ldots,X_n\}
\end{align*}
denote the range of the walk at time $n$. Let
\begin{align*}
 \mathcal{B}_1:=\{z\in\R^d:\norm{z}_1<1\}.
\end{align*}
For every $t>0$, set
\begin{align*}
 \alpha_t:=\frac12\left((d+1)!\frac{t}{\theta}\right)^{1/(d+1)}.
\end{align*}
For $z\in\R^d$ and a nonempty set $E\subset\R^d$, let
$\operatorname{dist}(z,E):=\inf_{y\in E}\norm{z-y}_2$, where $\|\cdot\|_2$ is the Euclidean norm. Let $\overline{E}$ denote the closure of $E$. For nonempty compact sets $E,F\subset\R^d$, we define their Hausdorff distance by
\begin{align*}
 \operatorname{dist}(E,F):=\max\left\{
 \sup_{z\in E}\operatorname{dist}(z,F),
 \sup_{z\in F}\operatorname{dist}(z,E)
 \right\}.
\end{align*}
\begin{theorem}[Shape theorem]\label{thm:shape}
For every $t>0$, we have
\begin{align}
\lim_{R\to\infty}\operatorname{dist}\big(R^{-1}\Rcal_{\lfloor tR^{d+1}\rfloor}, \alpha_t\overline{ \mathcal{B}}_1\big)= 0\quad\text{in probability.}
 \label{eq:shape}
\end{align}
\end{theorem}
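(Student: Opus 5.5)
The plan is to extract from the dynamics a family of exact martingale identities (discrete quadrature identities) and then run an inner/outer bound argument in the spirit of Lawler--Bramson--Griffeath for internal DLA.

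\emph{Step 1: the basic martingale and volume/radius scaling.} For $x\neq0$ with $\kappa(x)$ nonzero coordinates, one step of $P$ changes $\norm{x}_2^2$ by $1$ in expectation. The biased component of $K$ moves along one of the $\kappa(x)$ nonzero coordinates toward $0$, changing $\norm{x}_2^2$ by $-2|x_i|+1$, which averages to $-2\norm{x}_1/\kappa(x)+1$. Hence a cookie step has expected increment $1-2\theta\norm{x}_1/\kappa(x)$. Therefore
\begin{align*}
M_n:=\norm{X_n}_2^2-n+2\theta\sum_{x\in\Rcal_{n-1}\setminus\{0\}}\frac{\norm{x}_1}{\kappa(x)}
\end{align*}
is a martingale with increments of order $\norm{X_n}_2$. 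I will first show a crude a priori bound $\max_{k\le n}\norm{X_k}_1=O(n^{1/(d+1)})$ with high probability. For this I would try a supermartingale built from $\norm{x}_1$: once the range contains a large $\ell^1$ ball, every excursion outside it eats cookies that each push back by $\theta$. Given the a priori bound, $\norm{X_n}_2^2$ and the quadratic variation of $M$ are $o(n)$ in the relevant normalisation. The conclusion is that $\sum_{x\in\Rcal_n}\norm{x}_1/\kappa(x)=n/(2\theta)+o(n)$. Points with $\kappa(x)<d$ are negligible in number. For the ball of radius $\rho$ the sum equals $2^{d}\rho^{d+1}/(d\cdot(d-1)!\,(d+1))\cdot(1+o(1))$. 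Setting this equal to $tR^{d+1}/(2\theta)$ gives exactly $\rho=\alpha_tR$, which is consistent with the theorem and fixes the constant.

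\emph{Step 2: quadrature identities for general test functions.} For any function $h$ that is discrete harmonic on a neighbourhood of the range, the same computation shows the following. The process $h(X_n)+\theta\sum_{x\in\Rcal_{n-1}}D h(x)$ is a martingale, where
\begin{align*}
Dh(x):=\frac{1}{\kappa(x)}\sum_{i:x_i\ne0}\big(h(x)-h(x-\mathrm{sgn}(x_i)e_i)\big)\approx \frac1d\,\mathrm{sgn}(x)\cdot\nabla h(x).
\end{align*}
Since $h(X_n)$ is of lower order than the sum, we obtain in the limit that the rescaled range $A$ satisfies $\int_A \mathrm{sgn}(z)\cdot\nabla h(z)\,dz\approx0$ for harmonic $h$ (a weighted quadrature identity). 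We also have the normalisation from Step 1, which in fact corresponds to $h=\norm{x}_2^2$ made harmonic by subtracting $n$. The $\ell^1$ ball satisfies these identities: slicing by level sets $\{\norm{z}_1=s\}$, the weight $\mathrm{sgn}(z)$ is the (non-normalised) outer normal, and Gauss's theorem gives $0$. Together with the monotonicity $\Rcal_n\uparrow$, this identifies the $\ell^1$ ball as the only candidate.

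\emph{Step 3: inner and outer bounds.} For the inner bound, fix $z$ with $\norm{z}_1<(1-\varepsilon)\alpha_t$. Suppose $Rz$ is not visited by time $tR^{d+1}$. I would choose $h$ as a discrete Green-type function with a pole just outside $A$ near $Rz$, a discrete analogue of the LBG test functions. In the identity of Step 2 the contributions from the missing region near $Rz$ and from the rest cannot balance, and I would make this quantitative via a martingale large-deviation bound and a union bound over the $O(R^d)$ candidate points. For the outer bound, if the range reaches $\norm{x}_1\ge(1+\varepsilon)\alpha_tR$, I would combine the inner bound (the ball of radius $(1-\varepsilon)\alpha_t R$ is full) with Step 1. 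A thin tentacle of the range sticking out to that distance requires the walk to cross many fresh cookies, each with drift $\theta$ back toward $0$. Combined with the fixed budget $\sum\norm{x}_1$, I would show that such excursions are exponentially unlikely, again via a supermartingale in $\norm{X}_1$ restricted to excursions outside the full ball.

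\emph{Main obstacle.} The main obstacle is Step 3, especially the inner bound. Unlike internal DLA, the "drift" here is only the first-order operator $\mathrm{sgn}(x)\cdot\nabla$ and is absent on the coordinate hyperplanes. This makes the discrete test functions with the right sign properties delicate to construct. I would first try $h(x)=$ discrete Green's function of the ball $\{\norm{x}_1<\alpha_tR\}$ evaluated at $Rz$, controlling its gradient near the boundary. If that fails, I would fall back on a bootstrap: prove a weaker inner bound at radius $c\alpha_tR$ from recurrence-type estimates for the simple random walk inside the range, then iterate using the quadrature identities.
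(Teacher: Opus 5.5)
\textbf{There is a genuine gap: nothing in your plan forces the limit to be the \emph{full} $\ell^1$ ball.} Your Step~1 martingale computation is exactly the paper's Lemma~\ref{lem:rad}, including the computation of $\alpha_t$. Your identity from a Green function with a pole at an unvisited site $Rz$ is the Green function representation \eqref{eq:green-representation} evaluated where $\ell_n(Rz)=0$.

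The quadrature identities of Step~2 cannot identify the limit. Let $f:[0,\infty)\to[0,1]$ have compact support and set $\rho(z)=f(\norm{z}_1)$. Then $\rho\nabla\norm{z}_1=\nabla F(\norm{z}_1)$ with $F(s)=-\int_s^\infty f(r)\rmd r$. Hence $\int\rho\nabla\norm{z}_1\cdot\nabla h\rmd z=-\int F(\norm{z}_1)\Delta h\rmd z=0$ for every harmonic $h$. So fractional densities $c\1_{r\mathcal{B}_1}$ with $c<1$, annuli, and increasing families of these normalised by $\int\rho\norm{z}_1\rmd z=dt/(2\theta)$ all satisfy your identities, your normalisation and monotonicity.

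What excludes them is local-time information that you discard. The paper couples $\rho$ with the rescaled local time $u$ through $\Delta u=-2\theta\operatorname{div}(\rho\nabla\norm{z}_1)$. It uses $u\geq0$, and $\rho=1$ a.e.\ on $A=\{u>0\}$; the latter needs strong $L^2_{\mathrm{loc}}$ compactness of $u_R$ (Proposition~\ref{prop:strong-local-compactness}). Testing with $u$ and with $\norm{z}_1$ gives $\norm{\nabla u+2\theta\1_A\nabla\norm{z}_1}_2^2=4\theta^2d(\abs A-\int\rho)\leq0$. This forces $\rho=\1_A$ and then $A=\alpha_t\mathcal{B}_1$.

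Your inner bound (``the contributions cannot balance'') is the same uniqueness problem in disguise. On $\{Rz\notin\Rcal_n\}$ your identity reads $0=V_{\Rcal_{n-1}}(Rz)+(\text{martingale})+O(\log R)$. A contradiction needs $V_{\Rcal_{n-1}}(Rz)\geq cR$. Via $\abs{V_A-V_B}\leq C\abs{A\mathbin{\triangle}B}^{1/d}$, you only get this once $\abs{\Rcal_n\mathbin{\triangle}B_R}=o(R^d)$ is already known, so the argument is circular. Note also that the inner half of the Hausdorff distance needs only convergence in measure, not that every interior site is visited.

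\textbf{The outer bound has a second gap.} A tentacle reaching $\norm{x}_1\geq(\alpha_t+\varepsilon)R$ has only $O(R)$ sites. It costs $O(R^2)=o(R^{d+1})$ in the budget $\sum_{x\in\Rcal}\norm{x}_1$, and a full inner ball does not exclude it. Nor is there a supermartingale in $\norm{X}_1$ along excursions. Once its cookies are eaten, the walk moves on the tentacle as simple random walk, can retraverse it arbitrarily often, and can extend it one site at a time. On coordinate hyperplanes, $(P-I)\norm{x}_1=(d-\kappa(x))/d>0$, which is an outward push.

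The paper's mechanism is a level count (Lemma~\ref{lem:level}, with $r_J(x)=\sum_{i\in J}\abs{x_i}$). If every exterior site is visited at most $q$ times, the expected number of visited sites with $r_J\geq b+1$ is at most $\frac{q}{q+2\theta}$ times the number with $r_J\geq b$, plus $\frac{2\theta}{q+2\theta}$. Over order $\varepsilon R$ levels this beats the initial $O(R^d)$ count only if $q=o(R/\log R)$.

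Proving this exterior local-time bound (Proposition~\ref{prop:ext}) is the core of Section~4. It combines:
\begin{itemize}
\item the Green representation;
\item Freedman's inequality for $G*\vartheta_{n_R}$, given $\max_x\ell_{n_R}(x)=O_{\P}(R\log R)$;
\item a \emph{polynomial} rate $\abs{\Rcal_{n_R}\mathbin{\triangle}B_R}=O_{\P}(R^{d-c/2})$. Mere $o(R^d)$ would not give $C\abs{\Rcal_{n_R}\mathbin{\triangle}B_R}^{1/d}=o(R/\log R)$. This rate itself uses Lemma~\ref{lem:rad}, the rearrangement Lemma~\ref{lem:rear}, and a quantitative lower bound on $\abs{\Rcal_{n_R}}$ via an anisotropic isoperimetric inequality (Propositions~\ref{prop:vol} and~\ref{prop:cone}).
\end{itemize}
The hyperplane problem is handled by induction over coordinate sets $J$.

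Finally, the a priori bound in your Step~1 presupposes a filled inner ball, which is not available at that stage. The paper instead proves $\max_{j\leq n_R}\norm{X_j}_1=O_{\P}(R\log R)$, using a $\cosh$ Lyapunov function and the occupation estimate $\E\ell_n(x)\leq Cn^{1/(d+1)}\P(x\in\Rcal_{n-1})$. This weaker bound is all Step~1 needs.
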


Shape theorems have been established for the ranges of other walks. For an Eulerian walker on $\Z^2$ with clockwise rotation and independent uniform initial rotors, Bou-Rabee and Peres proved that its range, rescaled by $n^{-1/3}$, converges almost surely to a deterministic compact convex set \cite[Theorem~1.1]{BPEuler}. For a random walk penalized by its range, Bolthausen \cite{Bol94} in dimension two and Berestycki and Cerf \cite[Corollary~1.3]{BerestyckiCerf} in dimensions $d\geq3$ proved that the range is close to a Euclidean ball on the scale $n^{1/(d+2)}$, with probability tending to one; see also \cite[Theorems~A, 1.2 and~1.4]{DFSX20} for quantitative estimates and \cite[Theorem~1.2]{BiskupProcaccia} for walks penalized by the weighted external edge boundary of their range. Euclidean ball shape theorems are also known for internal diffusion-limited aggregation \cite{LBG92}, rotor-router aggregation, and the divisible sandpile \cite[Theorems~1.1 and~1.3]{LP09}; see also \cite[Theorem~1.4]{LP10} for common scaling limits with multiple point sources and \cite[Theorem~1.2]{KSU2026} for the limit shape of the abelian sandpile model on the two-dimensional comb lattice that agrees with those of these three models.

A closely related model is the once-reinforced random walk introduced by Davis \cite{D1990}. In this model, each edge initially has weight one, its weight increases permanently to a fixed value greater than one after its first crossing, and transition probabilities are proportional to the incident edge weights. Sidoravicius conjectured that sufficiently large reinforcement leads to a deterministic limiting shape for the range on $\Z^d$, with $d\geq3$ (see \cite[Section~2]{Kozma}). A heuristic renormalisation by Beffara \cite{B2011} also predicts that the radius of the range at time $n$ is of order $n^{1/(d+1)}$ and that the range size is of order $n^{d/(d+1)}$. Recent progress includes a stretched-exponential tail lower bound for the range on the scale $n^{d/(d+2)}$, for $d\geq 2$, established by Collevecchio and Tarr\`es~\cite[Theorem~1.1]{CollevecchioTarres}. Bou-Rabee and Peres later improved this by proving a lower bound for the expected range on the predicted scale $n^{d/(d+1)}$, together with a stretched-exponential tail estimate, for every reinforcement parameter and every $d\geq2$~\cite[Theorems~1.1 and~1.2]{BP}. For sufficiently small reinforcement and $d\geq6$, Elboim and Kozma proved transience and an invariance principle with a Brownian limit~\cite[Theorem~1]{ElboimKozma}.

\subsection{Strategy of the proof}
Our strategy of the proof for Theorem \ref{thm:shape} is described as follows.
\begin{itemize}[leftmargin=10pt]
\item In Section 2, we first bound the expected size of the range by using a martingale decomposition for the Euclidean distance of the walk from the origin (see Proposition~\ref{prop:range-moment}). We then bound the expected number of visits to a given site by comparison with the truncated Green function of the lazy random walk (see Proposition~\ref{prop:occupation}); this comparison uses an argument similar to that for once-reinforced random walks in \cite[Section~8]{BP}. We also derive a Green function representation of the local times by convolving their Poisson equation with the Green function (see Proposition~\ref{lem:green-representation}).

\item In Section 3, we study the rescaled range and rescaled local times. Using the Poisson equation associated with local times, we show that every joint scaling limit satisfies an integral equation relating the local times to the bias on first visits (see Proposition~\ref{prop:quantitative-equation} and Proposition~\ref{thm:weak-limits}). We then use Fourier estimates to establish tightness of the laws of the rescaled local times in $L^2$ on bounded sets (see Proposition~\ref{prop:strong-local-compactness}). Using this stronger convergence, we identify the unique limit: the limiting range density is the indicator of $\alpha_t\mathcal{B}_1$, and the limiting local times decrease linearly with the distance from the origin inside that ball (see Proposition~\ref{thm:identification}).

\item In Section 4, we prove Theorem~\ref{thm:shape} by showing that the two suprema defining the Hausdorff distance converge to zero in probability. We first show that the maximum distance from a point of the limiting ball to the rescaled range converges to zero in probability (see Proposition~\ref{cor:shape-in-measure}). To control the farthest visited sites, we prove a quantitative lower bound on the size of the range (see Proposition~\ref{prop:vol}). Combining this bound with a bound on the sum of the distances of visited sites from the origin (see Lemma~\ref{lem:rad}), we bound the number of sites visited beyond the limiting ball. Using the Green function representation of local times, we estimate the martingale term through differences of the Green function. We then apply Freedman's martingale inequality to obtain a uniform bound on local times beyond a fixed enlargement of that ball (see Proposition~\ref{prop:ext}). Combining this bound with the inward bias on first visits, we iterate the inequality for the expected number of visited sites in Lemma~\ref{lem:level}. We thereby show that, for every $\varepsilon>0$, the probability that the walk reaches a site at a distance greater than $(\alpha_t+\varepsilon)R$ from the origin by time $\lfloor tR^{d+1}\rfloor$ tends to zero. This controls the maximum distance from the rescaled range to the limiting ball and completes the proof of Theorem~\ref{thm:shape}.
\end{itemize}

\subsection*{AI disclosure} ChatGPT Education (GPT 5.6 Sol) was used to assist in drafting the proof of Proposition \ref{prop:cone} in the appendix (which is of independent interest). It also helped with proofreading, identified several major gaps in the weak convergence and functional analytic arguments in Section 3, and suggested substantial improvements. The author remains solely responsible for all mathematical content in this paper. 

\section{Preliminary results}

\subsection{Upper bound for the expected range size}
For a finite set $A$, we use $\operatorname{card}(A)$ or $\abs{A}$ to denote its cardinality, and for a Lebesgue measurable set $E\subset\R^d$, we use $\abs{E}:=\int_{\R^d}\1_E(z)\rmd z$ to denote its volume, with the meaning of $\abs{\cdot}$ determined by the context. 

Unless otherwise specified, we use $C$ and $c$ to denote generic positive constants that may change from line to line and depend only on fixed parameters. We use $O_{\mathbb{P}}$ and $o_{\mathbb{P}}$ as the analogues of the big-$O$ and small-$o$ notation, respectively, for asymptotic statements in $\mathbb{P}$-probability.

Let $e_1,\ldots,e_d$ be the standard unit vectors of $\Z^d$. For a function $f$ on $\Z^d$, define 
\begin{align*}
 Pf(x):=\frac1{2d}\sum_{y: y\sim x}f(y).
\end{align*}
For $x\neq0$, define
\begin{align}
 Kf(x):=(1-\theta)Pf(x) +\frac{\theta}{\kappa(x)}\sum_{i:x_i\neq0} f\bigl(x-\operatorname{sgn}(x_i)e_i\bigr),
 \label{eq:cookie-kernel}
\end{align}
and set $Kf(0):=Pf(0)$. Then $P$ and $K$ are the transition operators of the simple symmetric random walk and the random walk biased toward the origin, respectively. 

Recall that $\Rcal_n$ is the range at time $n$ and use the convention $\Rcal_{-1}:=\varnothing$. Define
\begin{align*}
 \eta_n:=\1_{\{X_n\notin\Rcal_{n-1},\,X_n\neq0\}}.
\end{align*}
Thus $\eta_n=1$ exactly when there is a cookie at $X_n$ right before time $n$.

Define $\varphi(x):=\norm{x}_2^2$. For $x\neq0$, we note that  $P\varphi(x)-\varphi(x)=1$, and thus
\begin{align}
K\varphi(x)-\varphi(x) =1-\frac{2\theta}{\kappa(x)}\norm{x}_1.
 \label{eq:squared-drift}
\end{align}

\begin{lemma}\label{lem:first-moment-range}
There is a constant $C=C(d)<\infty$ such that
\begin{align}
 \E\Big[\sum_{x\in\Rcal_{n-1}}\norm{x}_1\Big] \leq \frac{C n}{\theta}\quad\text{for each } n\geq1.
 \label{eq:radial-sum}
\end{align}
\end{lemma}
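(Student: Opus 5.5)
We use the Doob decomposition of $\varphi(X_n)=\norm{X_n}_2^2$. Since $\varphi\ge0$ and $\varphi(X_0)=0$, the total drift of $\varphi$ up to time $n$ has nonnegative expectation. On cookie steps ($\eta_j=1$) the walk uses $K$, and the drift is given by \eqref{eq:squared-drift}. On all other steps it uses $P$, and the drift is $P\varphi(x)-\varphi(x)=1$; this also holds at $x=0$, where $P\varphi(0)=1$. Hence
\begin{align*}
 M_n:=\varphi(X_n)-\sum_{j=0}^{n-1}\Big(1-\eta_j\frac{2\theta}{\kappa(X_j)}\norm{X_j}_1\Big)
\end{align*}
is an $(\F_n)$-martingale with $M_0=0$. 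It is integrable because the walk is nearest-neighbor, so $\norm{X_j}_1\le j$. Taking expectations gives
\begin{align*}
 0\le \E[\varphi(X_n)] = n-2\theta\,\E\Big[\sum_{j=0}^{n-1}\eta_j\frac{\norm{X_j}_1}{\kappa(X_j)}\Big].
\end{align*}

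Next we identify the sum over cookie times with a sum over the range. Each nonzero $x\in\Rcal_{n-1}$ is first visited at exactly one time $j\le n-1$, and at that time $\eta_j=1$. The origin contributes $\norm{0}_1=0$ to the sum in \eqref{eq:radial-sum}. Therefore
\begin{align*}
 \sum_{j=0}^{n-1}\eta_j\frac{\norm{X_j}_1}{\kappa(X_j)}=\sum_{x\in\Rcal_{n-1}}\frac{\norm{x}_1}{\kappa(x)}\ge\frac1d\sum_{x\in\Rcal_{n-1}}\norm{x}_1,
\end{align*}
using $\kappa(x)\le d$. Combining the two displays,
\begin{align*}
 \frac{2\theta}{d}\,\E\Big[\sum_{x\in\Rcal_{n-1}}\norm{x}_1\Big]\le n-\E[\varphi(X_n)]\le n,
\end{align*}
which is \eqref{eq:radial-sum} with $C=d/2$.

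The only step needing real care is the drift bookkeeping, specifically that the indicator $\eta_j$ correctly selects the kernel $K$ at time $j$. By definition, $\eta_j=1$ exactly when $X_j\neq0$ and $X_j\notin\Rcal_{j-1}$, which is equivalent to $\ell_j(X_j)=0$ and $X_j\ne0$. By \eqref{eq:walk-transition}, this is precisely the condition under which the step from time $j$ to $j+1$ uses $K$. So the conditional drift of $\varphi$ at time $j$ is $1-\eta_j 2\theta\norm{X_j}_1/\kappa(X_j)$. No stopping or localization argument is needed, since every quantity involved is bounded at each fixed time.
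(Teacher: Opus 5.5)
Your proof is correct and matches the paper's argument: the Doob decomposition of $\norm{X_n}_2^2$ using the drift identity \eqref{eq:squared-drift}, the bijection between cookie times before $n$ and nonzero vertices of $\Rcal_{n-1}$, and the bound $\kappa(x)\le d$. You also make explicit the constant $C=d/2$ and the fact that $\eta_j=1$ is exactly the condition in \eqref{eq:walk-transition} for using $K$.
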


\begin{proof}
By \eqref{eq:walk-transition} and \eqref{eq:squared-drift}, the conditional mean of $\norm{X_{j+1}}_2^2-\norm{X_j}_2^2$ is equal to $1$ when $\eta_j=0$, and is equal to $1-2\theta\norm{X_j}_1/\kappa(X_j)$ when $\eta_j=1$. Each nonzero vertex of $\Rcal_{n-1}$ occurs exactly once among the $X_j$ with $0\leq j<n$ and $\eta_j=1$. Summing these conditional means and taking expectations, we have
\begin{align}
 \E\norm{X_n}_2^2 =n-2\theta\E\Bigg[\sum_{x\in\Rcal_{n-1}\setminus\{0\}}\frac{\norm{x}_1}{\kappa(x)}\Bigg].
 \label{eq:quadratic-identity}
\end{align}
Since the left-hand side is nonnegative and $\kappa(x)\leq d$, we obtain \eqref{eq:radial-sum}.
\end{proof}

\begin{lemma}\label{lem:lattice-moment}
There is $c=c(d)>0$ such that every nonempty finite set $A\subset\Z^d$ satisfies
\begin{align*}
 \sum_{x\in A}(1+\norm{x}_1)\geq c\abs A^{1+1/d}.
\end{align*}
\end{lemma}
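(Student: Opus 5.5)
The plan is to use a rearrangement argument: among all sets of a given cardinality, the sum $\sum_{x\in A}(1+\norm{x}_1)$ is smallest when $A$ consists of the points closest to the origin in $\ell^1$. Combined with a volume count of $\ell^1$ balls, this gives the bound.

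Let $N=\abs A$ and $B_r:=\{x\in\Z^d:\norm{x}_1\le r\}$ for integers $r\ge0$. The key input is a two-sided count of $\abs{B_r}$. Each $x\in B_r$ has its unit cube $x+[0,1)^d$ contained in the real ball $\{\norm{z}_1\le r+d\}$. Therefore
\[
\abs{B_r}\le \frac{2^d(r+d)^d}{d!}\le C_1(1+r)^d,
\]
with $C_1=C_1(d)$.

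Next I split into two cases according to the size of $N$.

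\textbf{Case $N\le 2C_1$.} Every term is at least $1$, so the sum is at least $N$. Also $N\ge c' N^{1+1/d}$ for $c'=(2C_1)^{-1/d}$, which settles this case.

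\textbf{Case $N>2C_1$.} Choose the integer $r\ge0$ maximal with $C_1(1+r)^d\le N/2$. Since $N/2>C_1$, such an $r$ exists. Then $\abs{B_r}\le N/2$, so at least $N/2$ points of $A$ lie outside $B_r$. Each such point satisfies $\norm{x}_1\ge r+1$. By maximality of $r$ we have $C_1(2+r)^d>N/2$, which gives
\[
r+1\ge \tfrac12(r+2)\ge \tfrac12\Big(\frac{N}{2C_1}\Big)^{1/d}.
\]
Hence
\[
\sum_{x\in A}(1+\norm{x}_1)\ge \frac N2\cdot\frac12\Big(\frac N{2C_1}\Big)^{1/d}=c''N^{1+1/d}.
\]

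Taking $c=\min(c',c'')$ proves the lemma. There is no real obstacle here; the only point requiring care is the lattice-point bound for $\abs{B_r}$ and the handling of small $N$, both done above.
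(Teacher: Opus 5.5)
Your proof is correct and is essentially the paper's argument: bound the number of lattice points in an $\ell^1$ ball by $C(1+r)^d$, choose a radius of order $\abs A^{1/d}$ so that this ball contains at most half of $A$, and handle small $\abs A$ separately via $\sum_{x\in A}(1+\norm{x}_1)\geq\abs A$. The differences are cosmetic. You take the radius to be a maximal integer rather than $c_0\abs A^{1/d}$, and you justify the lattice-point count by cube packing. Also, the ``rearrangement'' you mention in your opening sentence is never actually used in the argument.
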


\begin{proof}
We use the fact that ${\rm card}\{x\in\Z^d:\norm{x}_1\leq r\}\leq C(1+r)^d$.
Choose $c_0=c_0(d)>0$ sufficiently small such that $C(2c_0)^d\leq1/2$, and set $r=c_0\abs A^{1/d}$. If $r\geq1$, then
\begin{align*}
 {\rm card}\{x\in A:\norm{x}_1\leq r\}  \leq C(1+r)^d  \leq C(2r)^d  =C(2c_0)^d\abs A  \leq\frac{\abs A}{2}.
\end{align*}
Hence, at least $\abs A/2$ vertices of $A$ have norm greater than $r$, and therefore
\begin{align*}
 \sum_{x\in A}(1+\norm{x}_1)  \geq\frac{\abs A r}{2}  =\frac{c_0}{2}\abs A^{1+1/d}.
\end{align*}
If $r<1$, then $c_0\abs A^{1/d}<1$, and therefore
\begin{align*}
 \sum_{x\in A}(1+\norm{x}_1)  \geq\abs A  >c_0\abs A^{1+1/d}.
\end{align*}
\end{proof}

\begin{proposition}\label{prop:range-moment}
There is $C=C(d)<\infty$ such that for $n\geq0$,
\begin{align*}
 \E\abs{\Rcal_n}\leq C\left(1+\frac n\theta\right)^{d/(d+1)}.
\end{align*}
\end{proposition}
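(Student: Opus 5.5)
Combine Lemma~\ref{lem:first-moment-range} and Lemma~\ref{lem:lattice-moment}, then pass the bound through the expectation with Jensen's inequality.

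\emph{Pathwise lower bound.} For $n\geq1$, apply Lemma~\ref{lem:lattice-moment} to the nonempty finite set $A=\Rcal_{n-1}$. Since $\abs{\Rcal_{n-1}}\leq n$, this gives
\begin{align*}
 c\abs{\Rcal_{n-1}}^{1+1/d}\leq \abs{\Rcal_{n-1}}+\sum_{x\in\Rcal_{n-1}}\norm{x}_1\leq n+\sum_{x\in\Rcal_{n-1}}\norm{x}_1 .
\end{align*}

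\emph{Taking expectations.} By Lemma~\ref{lem:first-moment-range},
\begin{align*}
 c\,\E\abs{\Rcal_{n-1}}^{1+1/d}\leq n+\frac{Cn}{\theta}\leq C'\Big(1+\frac n\theta\Big)n .
\end{align*}
Using $\theta\leq1$, we have $n\leq n/\theta$, so the right-hand side is at most $C''(1+n/\theta)$. The map $u\mapsto u^{1+1/d}$ is convex, so Jensen's inequality yields
\begin{align*}
 \E\abs{\Rcal_{n-1}}\leq \big(\E\abs{\Rcal_{n-1}}^{(d+1)/d}\big)^{d/(d+1)}\leq C\Big(1+\frac n\theta\Big)^{d/(d+1)} .
\end{align*}

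\emph{Shifting the index.} Since $\abs{\Rcal_n}\leq\abs{\Rcal_{n-1}}+1$, we get $\E\abs{\Rcal_n}\leq \E\abs{\Rcal_{n-1}}+1$. Replacing $n$ by $n+1$ only changes $1+n/\theta$ by a bounded factor, because $1+(n+1)/\theta\leq 2(1+n/\theta)$ once $\theta\leq1$ and $n\geq1$. The additive $1$ is absorbed since the right-hand side is at least $1$. The case $n=0$ is trivial because $\abs{\Rcal_0}=1$.

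The only step needing care is this constant bookkeeping: the term $n$ coming from the "$+1$" in Lemma~\ref{lem:lattice-moment} must be dominated by $n/\theta$, which holds because $\theta\in(0,1]$. With that checked, the proof is short.
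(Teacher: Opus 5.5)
Your argument is correct and is essentially the paper's proof. The paper applies Lemma~\ref{lem:lattice-moment} directly to $\Rcal_n$ and Lemma~\ref{lem:first-moment-range} with $n+1$ in place of $n$, which avoids your final index shift. One small slip: drop the stray factor $n$ in $C'(1+n/\theta)n$, since what you actually use is $n+Cn/\theta\leq(1+C)n/\theta\leq(1+C)(1+n/\theta)$.
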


\begin{proof} 
The bound holds trivially for $n=0$. Fix $n\geq1$. Applying Lemma~\ref{lem:lattice-moment} to $\Rcal_n$ and taking expectations, we have
\begin{align*}
 c\E\abs{\Rcal_n}^{1+1/d}  &\leq \E\abs{\Rcal_n}  +\E\Big[\sum_{x\in\Rcal_n}\norm{x}_1\Big] \leq n+1+\frac{C(n+1)}{\theta},
\end{align*}
where in the last inequality we use the fact that $\abs{\Rcal_n}\leq n+1$ and apply Lemma~\ref{lem:first-moment-range} with $n+1$ in place of $n$. This implies 
\begin{align*}
 \E\abs{\Rcal_n}^{1+1/d}\leq C\left(1+\frac n\theta\right).
\end{align*}
Finally, applying Jensen's inequality, we obtain
\begin{align*}
 \E\abs{\Rcal_n}  \leq\left(\E\abs{\Rcal_n}^{1+1/d}\right)^{d/(d+1)}
 \leq C\left(1+\frac n\theta\right)^{d/(d+1)}.
\end{align*}
\end{proof}

\subsection{Occupation measure estimate}

\begin{proposition}\label{prop:occupation}
There is $C=C(d)<\infty$ such that, for every $z\in\Z^d$ and all integers $m\geq0$ and $h\geq1$,
\begin{align}
 \E\Big[\sum_{j=m}^{m+h-1}\1_{\{X_j=z\}}\big|\F_m\Big]\leq Ch^{1/(d+1)}
 \quad\text{almost surely}.
 \label{eq:occupation}
\end{align}
The estimate holds for every deterministic starting vertex and every deterministic set of consumed cookies before time zero. It also holds when $m$ is replaced by an almost surely finite stopping time.
\end{proposition}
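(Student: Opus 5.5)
The plan is to use the Poisson equation satisfied by the occupation numbers in the time window. The correction coming from cookies has divergence form, so testing against a Green function of the simple random walk costs only a gradient of that Green function. The resulting bound is then optimized over a spatial scale $r$. The set of consumed cookies at time $m$ is $\F_m$-measurable, and only the Markov-type transition rule \eqref{eq:walk-transition} will be used. So it suffices to fix a deterministic starting vertex $X_0=x_0$ and a deterministic set of consumed cookies, and prove $\E\big[\sum_{j=0}^{h-1}\1_{\{X_j=z\}}\big]\leq Ch^{1/(d+1)}$. The stopping-time version then follows by the strong Markov property of the pair (position, set of consumed cookies).

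\textbf{Step 1: Poisson equation.} Let $L(y):=\sum_{j=0}^{h-1}\1_{\{X_j=y\}}$. For any bounded $g$ on $\Z^d$, the process
\[
g(X_n)-g(X_0)-\sum_{j<n}\big[(P-I)g(X_j)+\eta_j\,(K-P)g(X_j)\big]
\]
is a martingale. Taking expectations at $n=h$ gives
\[
\E\sum_y L(y)(I-P)g(y)=g(x_0)-\E g(X_h)+\E\sum_{j<h}\eta_j (K-P)g(X_j).
\]
By \eqref{eq:cookie-kernel}, $(K-P)g(x)=\theta\big[\kappa(x)^{-1}\sum_{i:x_i\neq 0} g(x-\operatorname{sgn}(x_i)e_i)-Pg(x)\big]$. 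This is bounded by $2\theta\max_{y\sim x}\abs{g(y)-g(x)}$, i.e.\ by a discrete gradient of $g$. Each site contributes at most once to the cookie sum, since $\eta_j=1$ only on first visits to nonzero sites.

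\textbf{Step 2: choice of test function.} Take $g(y)=G_r(y,z)$, the Green function of simple random walk killed on exiting the $\ell^2$ ball $B(z,r)$; use its lazy version if needed to avoid parity issues. Then $(I-P)g=\1_{\{z\}}$ inside the ball. This yields
\[
\E L(z)\leq \E\sum_{y\in\partial B(z,r)} L(y)\,\abs{(I-P)g(y)} + \sup g + 2\theta\sum_{x\in B(z,r+1)}\max_{y\sim x}\abs{g(y)-g(x)}.
\]
Since the boundary term is awkward, I would instead average $g$ over radii $r'\in[r,2r]$, or equivalently use a smooth cutoff $\psi((y-z)/r)$ multiplying the whole-space Green function. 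This replaces $(I-P)g$ by $\1_{\{z\}}$ plus an error of size $O(r^{-d})$ supported on the annulus $r\leq\norm{y-z}_2\leq 2r$. The standard estimates $G(y,z)\lesssim\norm{y-z}_2^{2-d}$ (or $\log(r/\norm{y-z}_2)$ in $d=2$) and $\abs{\nabla G}\lesssim \norm{y-z}_2^{1-d}$ then give:
\begin{itemize}
\item annulus term: at most $Cr^{-d}\,\E\sum_y L(y)\leq Ch\,r^{-d}$;
\item $\sup g$: at most $C\log r$ for $d=2$ and $C$ for $d\geq3$;
\item gradient (cookie) term: at most $C\sum_{\norm{x-z}_2\leq 2r}(1+\norm{x-z}_2)^{1-d}\leq Cr$.
\end{itemize}

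\textbf{Step 3: optimization.} Combining the three terms,
\[
\E L(z)\leq C\big(hr^{-d}+r+\log r\big).
\]
Choosing $r=h^{1/(d+1)}$ gives $\E L(z)\leq Ch^{1/(d+1)}$, uniformly in the starting vertex and the cookie configuration.

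\textbf{Main obstacle.} The delicate part is Step 2. In $d=2$ the Green function must be truncated, and the cutoff must produce an error of size $O(r^{-d})$ in $(I-P)g$ with the correct sign structure; sign matters only up to absolute values, which is fine. The gradient bound must also be checked at the cutoff, where $\nabla(\psi G)$ picks up an extra $r^{-1}G\lesssim r^{1-d}$ (times $\log r$ in $d=2$). Summed over the annulus this gives $O(r)$, up to a logarithm in $d=2$ that can be absorbed by taking $r=ch^{1/(d+1)}$. If this cutoff bookkeeping is troublesome, the fallback is the approach cited for once-reinforced walks: use the truncated Green function $G_T(y,z)=\sum_{k<T}\P_y(\text{lazy walk at } z \text{ at time } k)$ with $T=r^2$ as the test function, which satisfies $(I-P)G_T=\1_{\{z\}}-p_T(\cdot,z)$ with $p_T\lesssim T^{-d/2}$.
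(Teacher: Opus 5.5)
Your proof is correct, but its main route differs from the paper's. Your fallback is exactly what the paper does. The paper tests against $f=-g_N(\cdot-z)$, where $g_N$ is the time-truncated Green function of the lazy walk, so that $(P-I)f=2\bigl(\1_{\{\cdot=z\}}-\widetilde P^N(0,\cdot-z)\bigr)$ holds exactly on all of $\Z^d$. The error there is not localized, but it is uniformly at most $CN^{-d/2}$, so it costs $ChN^{-d/2}$. The cookie term is controlled by the edge total variation $\sum_{x\sim y}\abs{g_N(x)-g_N(y)}\leq C\sqrt N$, and the endpoint term by $\operatorname{osc}(g_N)$. All three estimates are quoted from Bou-Rabee and Peres (Lemma~\ref{lem:green}), and $N=\lceil h^{2/(d+1)}\rceil$ plays the role of your $r^2$.

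Your spatial cutoff $g=\psi_r G$ (with $z=0$ and $\psi_r=\psi(\cdot/r)$) needs only the pointwise bounds \eqref{eq:Green-inq} and the discrete product rule
\[
(I-P)g(y)=\1_{\{y=0\}}+G(y)(I-P)\psi_r(y)-\frac1{2d}\sum_{e}\bigl(G(y+e)-G(y)\bigr)\bigl(\psi_r(y+e)-\psi_r(y)\bigr).
\]
Its last two terms are $O(r^{-d})$ and supported on the annulus. The gradient sum is $O(r)$ by the same summation as in \eqref{eq:esum}. So your argument is self-contained modulo classical Green function asymptotics, at the price of the cutoff bookkeeping. The paper's is shorter because the heat-kernel work is outsourced to the cited lemma. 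Your reduction to deterministic initial data via the strong Markov property of the pair (position, consumed cookies) is equivalent to the paper's direct conditioning on $\F_m$.

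One correction to your last paragraph: a stray factor $\log r$ in $d=2$ cannot be absorbed by taking $r=ch^{1/(d+1)}$. It would leave a bound of order $h^{1/3}\log h$. The factor never arises, however, if you keep the normalization of your Step~2. In $d=2$, subtract from $G$ the constant $G(\lfloor r\rfloor e_1)$. This leaves $(I-P)G=\delta_0$ unchanged, and by the gradient bound in \eqref{eq:Green-inq} it makes $G=O(1)$ on $\{r-1\leq\norm{y}_2\leq 2r+1\}$. Then the annulus error is $O(r^{-2})$ and the cutoff contribution $\abs{G}\,\abs{\nabla\psi_r}$ is $O(r^{-1})$, with no logarithm. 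The only logarithm left is in the additive endpoint term. Bound that term by $\operatorname{osc}(g)\leq C\log r$ rather than by $\sup g$, since $g(x_0)-\E g(X_h)$ is what appears and the shifted $g$ can be negative.
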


Let $\widetilde P:=(I+P)/2$ be the probability transition matrix of the lazy simple random walk, and define the truncated Green function
\begin{align*}
 g_N(x):=\sum_{r=0}^{N-1}\widetilde P^r(0,x)\quad \text{with } N\geq1.
\end{align*}
For a bounded function $f$ on $\Z^d$, define
\begin{align*}
 \operatorname{osc}(f):=\sup_{x\in\Z^d}f(x)-\inf_{x\in\Z^d}f(x).
\end{align*}

To prove Proposition \ref{prop:occupation}, we use the following estimates proved by Bou-Rabee and Peres.

\begin{lemma}[Lemma~8.1 in \cite{BP}]
\label{lem:green}
For $d\geq2$ and $N\geq1$,
\begin{align}
 & \norm{\widetilde P^N(0,\cdot)}_\infty\leq CN^{-d/2},
 \quad \operatorname{osc}(g_N)\leq
 \begin{cases}
 C\log(N+1)&\text{for } d=2,\\
 C &\text{for } d\geq3,
 \end{cases}
\\
& \sum_{\{x,y\}:x\sim y}\abs{g_N(x)-g_N(y)}\leq C\sqrt N.
 \label{eq:green-variation}
\end{align}
\end{lemma}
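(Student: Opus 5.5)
The statement is Lemma~8.1 of \cite{BP}. The plan is to prove all three bounds from the Fourier representation of the lazy kernel, and to handle the gradient bound \eqref{eq:green-variation} by a coupling. Write
\begin{align*}
\phi(k):=\frac12\Big(1+\frac1d\sum_{i=1}^d\cos k_i\Big),\qquad k\in[-\pi,\pi]^d,
\end{align*}
so that $\widetilde P^r(0,x)=(2\pi)^{-d}\int_{[-\pi,\pi]^d}\phi(k)^r e^{-ik\cdot x}\rmd k$. Laziness gives $0\le\phi\le1$. Moreover $1-\phi(k)=\frac1{2d}\sum_i(1-\cos k_i)\ge c\norm{k}_2^2$ on $[-\pi,\pi]^d$, so $\phi(k)^N\le e^{-cN\norm{k}_2^2}$.

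\emph{Heat kernel bound.} Bound the integrand in absolute value:
\begin{align*}
\norm{\widetilde P^N(0,\cdot)}_\infty\le(2\pi)^{-d}\int e^{-cN\norm{k}_2^2}\rmd k\le CN^{-d/2}.
\end{align*}

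\emph{Oscillation.} Since $g_N\ge0$ and $g_N(x)=0$ for $\norm{x}_1\ge N$, the infimum of $g_N$ is $0$. Because $\phi^r\ge0$, we have $\widetilde P^r(0,x)=(2\pi)^{-d}\int\phi^r\cos(k\cdot x)\rmd k\le\widetilde P^r(0,0)$, so $\sup g_N=g_N(0)$. The first bound then gives
\begin{align*}
g_N(0)\le1+\sum_{r=1}^{N-1}Cr^{-d/2},
\end{align*}
which is $C\log(N+1)$ for $d=2$ and $C$ for $d\ge3$.

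\emph{Gradient bound.} Each unordered edge is $\{x,x+e_i\}$ for a unique $x$ and $i$. Summing over $r$ first,
\begin{align*}
\sum_{\{x,y\}:x\sim y}\abs{g_N(x)-g_N(y)}\le\sum_{i=1}^d\sum_{r=0}^{N-1}\sum_{x}\abs{\widetilde P^r(0,x+e_i)-\widetilde P^r(0,x)},
\end{align*}
and the inner sum equals $2\,d_{TV}\big(\mathcal L(Y_r),\mathcal L(Y_r-e_i)\big)$, where $Y$ is the lazy walk. It suffices to show this total variation is at most $Cr^{-1/2}$, since then $\sum_{r<N}Cr^{-1/2}\le C\sqrt N$.

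To bound the total variation, couple $Y$ started at $0$ with $Y'$ started at $-e_i$ as follows. Both walks use the same decision of whether to move and in which coordinate. When coordinate $i$ is selected, they take independent $\pm1$ steps until their $i$-th coordinates meet, and identical steps afterwards. On an $i$-step, each walk moves in coordinate $i$ with probability $\frac1{2d}$ per step, and the two walks jointly pick one of four sign combinations. Hence the difference of the $i$-th coordinates, observed at $i$-steps, is a lazy symmetric walk on $\Z$ started at $1$. This difference cannot jump over $0$: it changes by $0$ or $\pm2$ only when both coordinates move, which has even parity issues. To avoid this, I would instead let only one of the two walks move at each $i$-step, chosen by a fair coin. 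Then the difference performs a simple symmetric walk with steps $\pm1$ and cannot skip $0$, while the marginals remain lazy walks. The number of $i$-steps by time $r$ is $\mathrm{Bin}(r,1/d)$, which exceeds $r/(2d)$ except with exponentially small probability. The gambler's-ruin estimate says a simple walk from $1$ avoids $0$ for $m$ steps with probability at most $Cm^{-1/2}$. Therefore $\P(Y_r\ne Y'_r)\le Cr^{-1/2}$, which bounds the total variation.

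The main obstacle is exactly this coupling step: the marginals must remain lazy random walks while the coordinate-$i$ difference must not be able to jump over zero. The fallback, if the coupling becomes awkward, is the standard gradient estimate $\abs{\nabla\widetilde P^r(0,x)}\le Cr^{-(d+1)/2}e^{-\norm{x}_2^2/(Cr)}$ from the local CLT, summed over $x$. That approach also needs the tail region $\norm{x}_2\gg\sqrt r$, which can be handled by Hoeffding's inequality.
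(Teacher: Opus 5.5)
Your proof is correct, and it necessarily takes a different route from the paper, which does not prove this lemma at all but imports it from \cite[Lemma~8.1]{BP}. You give a self-contained argument in two parts.
\begin{itemize}
\item The Fourier representation with symbol $\phi\ge 0$ gives the heat-kernel bound; nonnegativity of $\phi$ is exactly where laziness enters. The same computation shows that each $\widetilde P^r(0,\cdot)$, and hence $g_N$, is maximized at the origin. The finite support of $g_N$ gives $\inf g_N=0$, which yields the oscillation bound.
\item You reduce the edge-variation bound to $d_{TV}\bigl(\mathcal L(Y_r),\mathcal L(Y_r-e_i)\bigr)\le Cr^{-1/2}$. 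A coordinate-wise coupling plus the gambler's-ruin estimate delivers this without any local CLT for discrete gradients or Gaussian tail bounds, so your fallback is not needed.
\end{itemize}
The citation keeps the paper short. Your version makes this step independent of \cite{BP} and uses only elementary tools.

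When you write up the coupling, state only the final version and make one point explicit. At a step where coordinate $i$ is selected and the $i$-th coordinates have not yet met, the fair coin choosing which walk moves \emph{replaces} the shared laziness decision. Each walk then moves in coordinate $i$ with probability $\frac1d\cdot\frac12=\frac1{2d}$ per step, as required. If you literally kept ``the same decision of whether to move'' and then let only one walk move, the rate would be $\frac1{4d}$ and the marginals would not be lazy walks. Your count $\mathrm{Bin}(r,1/d)$ of $i$-steps shows you intend the correct version.

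Also drop the sentence about the first coupling. That coupling fails because the $i$-difference stays odd and never reaches $0$, not because it can skip $0$.

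With this fix, the other coordinates agree forever. The $i$-difference is a simple symmetric walk from $1$, run for $\mathrm{Bin}(r,1/d)$ steps independent of its increments. Hence $\P(Y_r\ne Y'_r)\le e^{-cr}+Cr^{-1/2}$, as you claim.
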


\begin{proof}[Proof of Proposition~\ref{prop:occupation}]
For every function $f$ on $\Z^d$, we obtain from the transition law \eqref{eq:walk-transition} that
\begin{align}
 \E[f(X_{j+1})-f(X_j)\mid\F_j]=(P-I)f(X_j)+\eta_j(K-P)f(X_j).
 \label{eq:test-function-drift}
\end{align}
Fix $N\geq1$ and take $f(x):=-g_N(x-z)$. Since $P$ is translation invariant, we have
\begin{align*}
 (P-I)f(x)=2\bigl(\1_{\{x=z\}}-\widetilde P^N(0,x-z)\bigr).
\end{align*}
Moreover, we have $\sum_y(K(x,y)-P(x,y))=0$ and $\abs{K(x,y)-P(x,y)}\leq1$, and thus
\begin{align*}
 |(K-P)f(x)|\leq\sum_{y:y\sim x}|g_N(y-z)-g_N(x-z)|.
\end{align*}
Each vertex occurs at most once among the $X_j$ with $\eta_j=1$. Consequently,
\begin{align*}
\sum_{j=m}^{m+h-1}\eta_j|(K-P)f(X_j)|
&\leq\sum_{x\in\mathbb Z^d}\sum_{y:y\sim x}
|g_N(y-z)-g_N(x-z)|=2\sum_{\{x,y\}:x\sim y}|g_N(y)-g_N(x)|.
\end{align*}
Summing \eqref{eq:test-function-drift} over $m\leq j<m+h$ and conditioning on $\F_m$, we therefore obtain
\begin{align*}
\begin{aligned}
2\mathbb E\!\left[\sum_{j=m}^{m+h-1}\mathbf1_{\{X_j=z\}}\big|\mathcal F_m\right] &=\mathbb E[f(X_{m+h})-f(X_m)\mid\mathcal F_m]+2\mathbb E\!\left[\sum_{j=m}^{m+h-1}\widetilde P^N(0,X_j-z)\big|\mathcal F_m\right]\\
&\quad-\mathbb E\Bigg[\sum_{j=m}^{m+h-1}\eta_j(K-P)f(X_j)\big|\mathcal F_m\Bigg]\\
&\leq\operatorname{osc}(g_N)
+2h\|\widetilde P^N(0,\cdot)\|_\infty+2\sum_{\{x,y\}:x\sim y}|g_N(y)-g_N(x)|.
\end{aligned}
\end{align*}
Using Lemma~\ref{lem:green}, we thus have
$$\mathbb E\!\left[\sum_{j=m}^{m+h-1}\mathbf1_{\{X_j=z\}}\big|\mathcal F_m\right]\leq C\bigl(hN^{-d/2}+\sqrt N\bigr).$$
Taking $N=\lceil h^{2/(d+1)}\rceil$, we obtain \eqref{eq:occupation}. For a deterministic initial set of consumed cookies, we take $\eta_j$ to be the indicator that a cookie is initially present at $X_j$ and $X_j$ has not been visited before time $j$. The identity \eqref{eq:test-function-drift} remains valid, and each vertex still occurs at most once among the $X_j$ with $\eta_j=1$. Thus the same estimates apply to every deterministic initial set of consumed cookies and every deterministic starting vertex. For a stopping time $T$, we apply the conditional estimate on each event $\{T=m\}$ to obtain the assertion with $T$ in place of $m$.
\end{proof}

\subsection{Rescaled range}

For $t>0$ and $R\geq1$, set $n_R:=\lfloor tR^{d+1}\rfloor$.

\begin{proposition}\label{prop:tail}
There are constants $b,\lambda,C>0$, depending on $d,\theta,t$, such that, for every $R\geq1$ and $L\geq bdR$,
\begin{align}
 \E\big[{\rm card}\{x\in\Rcal_{n_R}:\norm{x}_1\geq L\}\big]  &\leq CR^d\exp\left(-\frac{\lambda L}{dR}\right),
 \label{eq:tail}\\
 \E\sum_{j<n_R}(1+\norm{X_j}_1)^2&\leq CR^{d+3},
\end{align}
and, for a sufficiently large $C_1$,
\begin{align*}
 \P\left(\max_{0\leq j\leq n_R}\norm{X_j}_1 >C_1R\log R\right)\rightarrow0 \quad\text{as }R\to\infty.
\end{align*}
\end{proposition}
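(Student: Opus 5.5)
Throughout, $n=n_R$. The plan is to get all three bounds from the single drift identity \eqref{eq:test-function-drift}, applied to exponential and polynomial test functions of $\norm{x}_1$.

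\textbf{Tail bound \eqref{eq:tail}.} Use a Lyapunov function that grows with $\norm{x}_1$ and whose increase is paid for by consumed cookies. Take $f(x)=e^{\mu\norm{x}_1}$ with $\mu=\lambda'/(dR)$. Then:
\begin{itemize}
\item For $x\neq0$, $(P-I)f(x)\le C\mu^2 f(x)$.
\item On a cookie step $\eta_j=1$, the $K$-kernel gives $(K-P)f(x)\le -c\theta\mu f(x)$.
\end{itemize}
Summing \eqref{eq:test-function-drift} over $j<n$ and using that each visited site contributes exactly one cookie step, we get
\begin{align*}
c\theta\mu\,\E\Big[\sum_{x\in\Rcal_{n-1}} e^{\mu\norm{x}_1}\Big]\le 1+C\mu^2\,\E\Big[\sum_{j<n} e^{\mu\norm{X_j}_1}\Big].
\end{align*}
The right-hand sum is a time sum, not a range sum, so we convert it. By Proposition~\ref{prop:occupation} with $h=n$, each site carries expected local time at most $Cn^{1/(d+1)}\asymp R$. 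This conversion is not immediate because local time and range are correlated. Decomposing $\sum_{j<n}e^{\mu\norm{X_j}_1}=\sum_x e^{\mu\norm{x}_1}\ell_n(x)$ and conditioning at the first hitting time of $x$ (a stopping time, where Proposition~\ref{prop:occupation} applies) gives
\begin{align*}
\E\Big[\sum_{j<n} e^{\mu\norm{X_j}_1}\Big]\le CR\,\E\Big[\sum_{x\in\Rcal_{n}} e^{\mu\norm{x}_1}\Big].
\end{align*}
The right side is then $C\mu^2R\,\E\sum_{x\in\Rcal_n}e^{\mu\norm{x}_1}$, and $\mu^2R=\lambda'^2/(d^2R)$ is much smaller than $\theta\mu=\theta\lambda'/(dR)$ once $\lambda'$ is small. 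Absorbing this term into the left side gives a bound of order $1/\mu\asymp R$, which is not useful on its own.

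To handle this, split the sum according to whether $\norm{x}_1\le bdR$ or $\norm{x}_1> bdR$:
\begin{itemize}
\item The inner part contributes at most $CR^d e^{\mu bdR}=CR^de^{b\lambda'}$.
\item On the outer part the absorption is legitimate.
\end{itemize}
This yields $\E\sum_{x\in\Rcal_n}e^{\mu\norm{x}_1}\le CR^d$. Markov's inequality then gives \eqref{eq:tail} with $\lambda=\lambda'$. The main obstacle is controlling the time-sum-to-range-sum conversion uniformly. If it fails, I would instead run the drift argument for $\varphi(x)=\norm{x}_2^2$ as in Lemma~\ref{lem:first-moment-range}, localized to the region outside level $L$, and iterate over dyadic levels, gaining a constant factor per shell of width $dR/\lambda$.

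\textbf{Second moment.} Write
\begin{align*}
\sum_{j<n}(1+\norm{X_j}_1)^2=\sum_x(1+\norm{x}_1)^2\ell_n(x).
\end{align*}
Bound $\E\ell_n(x)$ by $CR\,\P(x\in\Rcal_n)$ through the same stopping-time use of Proposition~\ref{prop:occupation}. Then sum $(1+\norm{x}_1)^2$ over the range using \eqref{eq:tail} over dyadic shells:
\begin{itemize}
\item The bulk ($\norm{x}_1\le bdR$) gives $CR^d\cdot R^2$.
\item The tails are summable thanks to the exponential factor.
\end{itemize}
The total is $CR\cdot R^{d+2}=CR^{d+3}$.

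\textbf{Maximum displacement.} The walk cannot reach $\ell^1$-distance $L$ without visiting a site at distance exactly $\lceil L\rceil$. So by Markov's inequality and \eqref{eq:tail} with $L=C_1R\log R$,
\begin{align*}
\P\Big(\max_{j\le n}\norm{X_j}_1> L\Big)\le \E\,{\rm card}\{x\in\Rcal_n:\norm{x}_1\ge L\}\le CR^d R^{-\lambda C_1/d}\to0,
\end{align*}
provided $C_1>d^2/\lambda$.
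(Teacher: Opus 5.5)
Your conversion of time sums into range sums via the first-visit stopping time is exactly the paper's \eqref{eq:occwt}, so it is not the obstacle. Your second-moment and maximal-displacement steps coincide with the paper's and are fine once \eqref{eq:tail} is known.

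\textbf{The gap: the first drift claim is false.} The bound $(P-I)f\leq C\mu^2 f$ fails for $f(x)=e^{\mu\norm{x}_1}$. The $\ell^1$ norm has a kink on each hyperplane $\{x_i=0\}$: both neighbours $x\pm e_i$ lie one level further out. For every $x\in\Z^d$,
\begin{align*}
 (P-I)f(x)=f(x)\Bigl(\frac{\kappa(x)}{d}(\cosh\mu-1)+\frac{d-\kappa(x)}{d}(e^{\mu}-1)\Bigr)\geq\frac{d-\kappa(x)}{d}\,\mu f(x),
\end{align*}
which is first order in $\mu$.

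This drift is paid at every visit, not only on cookie steps. After the conversion it can only be bounded by $C\mu R\,\E\sum_{x\in\Rcal_{n-1},\,\kappa(x)<d}e^{\mu\norm{x}_1}$, while each site yields a cookie gain of only $c\theta\mu e^{\mu\norm{x}_1}$. Your split at level $bdR$ handles the $O(R^{d-1})$ hyperplane sites inside that level. But nothing a priori controls how many far-out range sites lie on coordinate hyperplanes, so the absorption fails by a factor of order $R$.

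Your intermediate bound $\E\sum_{x\in\Rcal}e^{\mu\norm{x}_1}\lesssim1/\mu\asymp R$ should have been a warning sign. Indeed, \eqref{eq:occwt} with $g\equiv1$ gives $n_R\leq C_0R\,\E\abs{\Rcal_{n_R-1}}$, hence $\E\abs{\Rcal_{n_R-1}}\geq cR^d$, so that bound is impossible for $d\geq2$. The hyperplane drift is precisely what balances the cookies, just as $(P-I)\norm{x}_2^2=1$ does in Lemma~\ref{lem:first-moment-range}.

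\textbf{The missing idea.} You need a test function that is smooth across the hyperplanes and splits the inward drift coordinatewise. The paper takes $f(x)=\sum_i\cosh(ax_i)$ with $a=\lambda/R$. Then $(P-I)f=\frac{\cosh a-1}{d}f$ exactly, while on a cookie step $(K-P)f\leq-\beta h$, where $h(x)=\sum_{i:\abs{x_i}\geq bR}\cosh(ax_i)$ and $\beta=\frac{\theta a}{2d}\tanh(\lambda b)\asymp1/R$.

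The bound $f\leq d\cosh(\lambda b)+h$ is the correct form of your inner/outer split. For $b$ large, the $h$-part of $\frac{\cosh a-1}{d}\E\sum_{j<n_R}f(X_j)$ is absorbed into $\beta\,\E\sum h$. The bounded part costs $C_0R(\cosh a-1)\cosh(\lambda b)\E\abs{\Rcal_{n_R-1}}\leq CR^{d-1}$, which gives $\E\sum_{x\in\Rcal_{n_R}}h(x)\leq CR^d$. Since $\norm{x}_1\geq L\geq bdR$ forces $\abs{x_i}\geq bR$ for some $i$, we get $h(x)\geq\frac12e^{\lambda L/(dR)}$, and \eqref{eq:tail} follows. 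The smooth substitute $\prod_i\cosh(\mu x_i)$, which lies between $2^{-d}e^{\mu\norm{x}_1}$ and $e^{\mu\norm{x}_1}$, would also work in your framework.

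\textbf{Your fallback.} It is only a sentence, so it cannot be credited as it stands. With $\ell^1$ levels it meets the same kink, since $(P-I)(\norm{x}_1-b)_+=(d-\kappa(x))/d$ for $\norm{x}_1>b$; this is why Lemma~\ref{lem:level} needs the induction over $J$. Euclidean levels such as $(\norm{x}_2^2-s)_+$, which are smooth and convex, could be made to work. There the per-visit cost is $1$, plus an $O(\sqrt s)$ term on a unit shell, against a cookie gain of at least $2\theta\sqrt s/d$.
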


\begin{proof}
When $n_R=0$, we have $\Rcal_{n_R}=\{0\}$, and all three assertions hold. We therefore assume $n_R\geq1$. By Proposition~\ref{prop:occupation}, we have
\begin{align*}
 \E\left[\sum_{j=k}^{n_R-1}\1_{\{X_j=x\}}\big|\F_k\right]
 \leq C(n_R-k)^{1/(d+1)}
 \leq Cn_R^{1/(d+1)}.
\end{align*}
We sum over the possible times of the first visit to $x$ and obtain
\begin{align*}
 \E\ell_{n_R}(x)
 &=\sum_{k=0}^{n_R-1}\E\left[\1_{\{X_k=x,\,\ell_k(x)=0\}}\E\left[\sum_{j=k}^{n_R-1}\1_{\{X_j=x\}}\big|\F_k\right]\right]\\
 &\leq Cn_R^{1/(d+1)}\sum_{k=0}^{n_R-1}\mathbb P(X_k=x,\ell_k(x)=0)=Cn_R^{1/(d+1)}\mathbb P(x\in\Rcal_{n_R-1}).
\end{align*}
For every nonnegative deterministic function $g$ on $\Z^d$, multiplying the above inequality by $g(x)$ and summing over $x\in \Z^d$, we obtain
\begin{align}
 \E\sum_{j<n_R}g(X_j)
 =\sum_{x\in\Z^d}g(x)\E\ell_{n_R}(x)\nonumber & \leq Cn_R^{1/(d+1)}\sum_{x\in\Z^d}g(x)\mathbb P(x\in\Rcal_{n_R-1})\nonumber\\
 &=Cn_R^{1/(d+1)}\E\sum_{x\in\Rcal_{n_R-1}}g(x)\leq C_0R\,\E\sum_{x\in\Rcal_{n_R-1}}g(x),
 \label{eq:occwt}
\end{align}
where $C_0=C_0(d,t)$ and we used $n_R\leq tR^{d+1}$. By Proposition~\ref{prop:range-moment}, we also have
\begin{align}
 \E\abs{\Rcal_{n_R}}\leq CR^d. \label{eq:mean-range}
\end{align}
Choose
\begin{align*}
 b\geq\max\left\{4,\frac{8C_0}{\theta}\right\}, \quad 0<\lambda\leq\frac1b,\quad a=\lambda/R,\end{align*}
 and define
\begin{align*}
 f(x):=\sum_{i=1}^d\cosh(ax_i), \quad h(x):=\sum_{i:\abs{x_i}\geq bR}\cosh(ax_i).
\end{align*}
Since $R\geq1$ and $\lambda b\leq1$, we have $\cosh a-1\leq a^2$, $\sinh a\geq a$, and $\tanh(\lambda b)\geq\lambda b/2$.
Our choice of $b$ also ensures $a\leq\tanh(\lambda b)/2$.
Thus, whenever $\abs{x_i}\geq bR$, we have
\begin{align*}
 \cosh(a\abs{x_i})-\cosh(a(\abs{x_i}-1)) &=\bigl(\sinh a\,\tanh(a\abs{x_i})-(\cosh a-1)\bigr)\cosh(a\abs{x_i})\\
 &\geq\bigl(a\tanh(\lambda b)-a^2\bigr)\cosh(a\abs{x_i})\geq\frac a2\tanh(\lambda b)\cosh(a\abs{x_i}).
\end{align*}
Note also that
\begin{align*}
 (P-I)f(x)=\frac{\cosh a-1}{d}f(x).
\end{align*}
Let $\beta:=\frac{\theta a}{2d}\tanh(\lambda b)$. We thus obtain that, for $x\neq0$, 
\begin{align*}
 (K-P)f(x) &=-\frac{\theta}{\kappa(x)}  \sum_{i:x_i\neq0} \bigl(\cosh(a\abs{x_i})-\cosh(a(\abs{x_i}-1))\bigr)  -\frac{\theta(\cosh a-1)}{d}f(x)\leq-\beta h(x).
\end{align*}
Here we used $\kappa(x)\leq d$ and the nonnegativity of each difference in the sum. The same inequality also holds for $x=0$ as $Kf(0)=Pf(0)$ and $h(0)=0$.
Each nonzero vertex of $\Rcal_{n_R-1}$ has its cookie consumed exactly once before time $n_R$. By summing the conditional expected increments of $f(X_j)$ over $0\le j<n_R$, we therefore obtain
\begin{align}
 \E f(X_{n_R})+\beta\,\E\sum_{x\in\Rcal_{n_R-1}}h(x) \leq d+\frac{\cosh a-1}{d}\E\sum_{j<n_R}f(X_j). \label{eq:fdrift}
\end{align}
We also have 
\begin{align*}
 f(x)\leq d\cosh(\lambda b)+h(x), \quad \frac{C_0R(\cosh a-1)}{d} \leq\frac{C_0\lambda^2}{dR} \leq\frac{\theta\lambda^2b}{8dR} \leq\frac{\beta}{2}.
\end{align*}
Combining these bounds with \eqref{eq:fdrift} together with \eqref{eq:occwt} and \eqref{eq:mean-range}, we therefore obtain
\begin{align*}
 \E f(X_{n_R})+\frac{\beta}{2}\E\sum_{x\in\Rcal_{n_R-1}}h(x) &\leq d+C_0R(\cosh a-1)\cosh(\lambda b)\,\E\abs{\Rcal_{n_R-1}}\leq d+CR^{d-1}.
\end{align*}
Using the fact that $\beta\geq\theta\lambda^2b/(4dR)$, $h\leq f$, and $\Rcal_{n_R}=\Rcal_{n_R-1}\cup\{X_{n_R}\}$, we thus get 
\begin{align}
 \E\sum_{x\in\Rcal_{n_R}}h(x) \leq\E\sum_{x\in\Rcal_{n_R-1}}h(x)+\E f(X_{n_R}) \leq CR^d. \label{eq:expmom}
\end{align}
If $\norm{x}_1\geq L\geq bdR$, then some coordinate satisfies $\abs{x_i}\geq L/d\geq bR$, and thus $h(x)\geq\frac12\exp(\lambda L/(dR))$.
We therefore deduce \eqref{eq:tail} from \eqref{eq:expmom}.
Combining \eqref{eq:mean-range}, and \eqref{eq:tail}, we obtain
\begin{align*}
\E\sum_{x\in\Rcal_{n_R}}(1+\norm{x}_1)^2 &=\E\abs{\Rcal_{n_R}}+2\int_0^\infty(1+s)\,\E\big[{\rm card} \{x\in\Rcal_{n_R}:\norm{x}_1>s\}\big]\rmd s\\
 &\leq CR^d\left( 1+\int_0^{bdR}(1+s)\rmd s +\int_{bdR}^\infty(1+s) \exp\left(-\frac{\lambda s}{dR}\right)\rmd s \right)\leq CR^{d+2}.
\end{align*}
Applying \eqref{eq:occwt} with $g(x)=(1+\norm{x}_1)^2$, we infer that
\begin{align*}
 \E\sum_{j<n_R}(1+\norm{X_j}_1)^2 \leq C_0R\,\E\sum_{x\in\Rcal_{n_R-1}}(1+\norm{x}_1)^2 \leq CR^{d+3}.
\end{align*}
Finally, by Markov's inequality and \eqref{eq:tail}, we obtain, for all sufficiently large $R$,
\begin{align*}
 \P\left(\max_{0\leq j\leq n_R}\norm{X_j}_1>C_1R\log R\right) &\leq\E\big[{\rm card}\{x\in\Rcal_{n_R}:\norm{x}_1\geq C_1R\log R\}\big]\leq CR^{d-\lambda C_1/d}.
\end{align*}
Choosing $C_1>d^2/\lambda$, we conclude that this probability tends to zero as $R\to\infty$.
\end{proof}

Let $$B_R:=\{x\in\Z^d:\norm{x}_1<\alpha_tR\}.$$

\begin{lemma}\label{lem:rad} 
We have
\begin{align*}
 \sum_{x\in\Rcal_{n_R}}\norm{x}_1-\sum_{x\in B_R}\norm{x}_1
 =O_{\P}\left(R^{(d+3)/2}+R^d(\log R)^d\right).
\end{align*}
\end{lemma}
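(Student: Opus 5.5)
The plan is to turn the quadratic identity behind \eqref{eq:quadratic-identity} into a pathwise statement. Write $n=n_R$. By \eqref{eq:walk-transition} and \eqref{eq:squared-drift}, the process
\begin{align*}
 M_k:=\norm{X_k}_2^2-k+2\theta\sum_{x\in\Rcal_{k-1}\setminus\{0\}}\frac{\norm{x}_1}{\kappa(x)}
\end{align*}
is an $(\F_k)$-martingale with $M_0=0$. This holds because the conditional drift of $\norm{X_{j+1}}_2^2-\norm{X_j}_2^2$ is $1-2\theta\eta_j\norm{X_j}_1/\kappa(X_j)$, and each nonzero vertex of $\Rcal_{k-1}$ is counted exactly once with $\eta_j=1$. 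Hence
\begin{align*}
 2\theta\sum_{x\in\Rcal_{n-1}\setminus\{0\}}\frac{\norm{x}_1}{\kappa(x)}=n-\norm{X_n}_2^2+M_n .
\end{align*}
The main term on the right is $n$. I will bound the other two terms and remove the factor $1/\kappa(x)$.

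\emph{Martingale term.} The increment of $M$ at step $j$ is at most $C(1+\norm{X_j}_1)$ in absolute value, since $\bigl|\norm{X_{j+1}}_2^2-\norm{X_j}_2^2\bigr|\le 2\norm{X_j}_2+1$. Therefore $\E M_n^2\le C\,\E\sum_{j<n}(1+\norm{X_j}_1)^2\le CR^{d+3}$ by Proposition~\ref{prop:tail}. Chebyshev's inequality then gives $M_n=O_\P(R^{(d+3)/2})$.

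\emph{Remaining errors.} I work on the event $G_R:=\{\max_{j\le n}\norm{X_j}_1\le C_1R\log R\}$, whose probability tends to one by Proposition~\ref{prop:tail}.
\begin{itemize}
\item On $G_R$, $\norm{X_n}_2^2\le C R^2(\log R)^2\le CR^d(\log R)^d$, since $d\ge2$.
\item Replacing $\Rcal_{n-1}$ by $\Rcal_n$ changes the sum by at most one term, which is $O(R\log R)$.
\item On $G_R$, the vertices with $\kappa(x)<d$ lie on coordinate hyperplanes. There are at most $C(R\log R)^{d-1}$ such vertices in the ball of radius $C_1R\log R$, and each has norm at most $C_1R\log R$. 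So replacing $1/\kappa(x)$ by $1/d$ costs at most $CR^d(\log R)^d$.
\end{itemize}
This is exactly where the logarithmic error term comes from. Combining these bounds gives
\begin{align*}
 \frac{2\theta}{d}\sum_{x\in\Rcal_n}\norm{x}_1=n+O_\P\bigl(R^{(d+3)/2}+R^d(\log R)^d\bigr).
\end{align*}

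\emph{Deterministic comparison.} It remains to check that $\frac{2\theta}{d}\sum_{x\in B_R}\norm{x}_1=tR^{d+1}+O(R^d)$; since $n=tR^{d+1}+O(1)$, this finishes the proof. Comparing the lattice sum with the integral over $\alpha_tR\,\mathcal B_1$, where boundary cubes contribute $O(R^{d-1})$ points of size $O(R)$, gives
\begin{align*}
 \sum_{x\in B_R}\norm{x}_1=\int_{\norm{z}_1<\alpha_tR}\norm{z}_1\rmd z+O(R^d)=\frac{d}{d+1}\cdot\frac{2^d}{d!}(\alpha_tR)^{d+1}+O(R^d).
\end{align*}
Here I use that the $\ell^1$ ball of radius $r$ has volume $2^dr^d/d!$. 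Substituting $\alpha_t^{d+1}=(d+1)!\,t/(2^{d+1}\theta)$ gives $dtR^{d+1}/(2\theta)$. Multiplying by $2\theta/d$ gives $tR^{d+1}$, matching the main term $n$.

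The only genuinely probabilistic step is the $L^2$ control of $M_n$, and it rests entirely on the second-moment bound in Proposition~\ref{prop:tail}. The point to be careful about is the $\kappa$ correction, which must be done on the high-probability event $G_R$ rather than in expectation.
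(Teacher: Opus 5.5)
Your proposal is correct and follows the paper's proof essentially step for step. You use the same martingale $M_n$, the same $L^2$ bound from Proposition~\ref{prop:tail}, and the same $\kappa$ correction and boundary terms on the event $\{\max_{j\le n_R}\norm{X_j}_1\le C_1R\log R\}$. The only difference is immaterial: you evaluate $\sum_{x\in B_R}\norm{x}_1$ by comparison with the integral over $\alpha_tR\,\mathcal B_1$, while the paper sums the exact shell count \eqref{eq:lattice-count}; both give $\frac{dn_R}{2\theta}+O(R^d)$.
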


\begin{proof}
We first notice that the process
\begin{align}
 M_n:=\norm{X_n}_2^2-n+2\theta\sum_{x\in\Rcal_{n-1}\setminus\{0\}}\frac{\norm{x}_1}{\kappa(x)},\quad \text{for }n\geq0, \label{eq:radmart}
\end{align}
is a martingale. Indeed, the conditional increment of $M_n$ has mean zero by \eqref{eq:squared-drift} and the fact that $K(0,\cdot)=P(0,\cdot)$. Note that $\abs{M_{n+1}-M_n}\leq C(1+\norm{X_n}_1)$. By orthogonality of martingale increments and Proposition~\ref{prop:tail}, we therefore obtain
\begin{align*}
 \E M_{n_R}^2=\sum_{j<n_R}\E(M_{j+1}-M_j)^2
 \leq C\E\sum_{j<n_R}(1+\norm{X_j}_1)^2
 \leq CR^{d+3}.
\end{align*}
Consequently, $M_{n_R}=O_{\P}(R^{(d+3)/2})$.
By Proposition~\ref{prop:tail}, we have
\begin{align*}
 \P\Big(\max_{0\leq j\leq n_R}\norm{X_j}_1\leq C_1R\log R\Big)\rightarrow1\quad\text{as }R\to\infty.
\end{align*}
For all sufficiently large $R$, we have
\begin{align*}
 {\rm card}\big\{x\in\Z^d:\norm{x}_1\leq C_1R\log R,\ x_i=0\text{ for some }i\big\} \leq d(2C_1R\log R+1)^{d-1}  \leq C(R\log R)^{d-1}.
\end{align*}
Since $\kappa(x)=d$ whenever every coordinate of $x$ is nonzero, on $\{\max_{0\leq j\leq n_R}\norm{X_j}_1\leq C_1R\log R\}$, we thus have
\begin{equation}\label{eq:drift-comp}
   \begin{aligned}
 \left|\sum_{x\in\Rcal_{n_R-1}}\norm{x}_1-d\sum_{x\in\Rcal_{n_R-1}\setminus\{0\}}\frac{\norm{x}_1}{\kappa(x)}\right| &\leq CR^d(\log R)^d,\\
 \norm{X_{n_R}}_2^2 &\leq C_1^2R^2(\log R)^2,\\
 0\leq\sum_{x\in\Rcal_{n_R}}\norm{x}_1-\sum_{x\in\Rcal_{n_R-1}}\norm{x}_1 &\leq C_1R\log R.
\end{aligned} 
\end{equation}
For every integer $k\geq1$, we notice that
\begin{align}\label{eq:lattice-count}
 {\rm card}\{x\in\Z^d:\norm{x}_1=k\}
 =\sum_{j=1}^{\min(d,k)}2^j\binom dj\binom{k-1}{j-1}
 =\frac{2^d}{(d-1)!}k^{d-1}+O(k^{d-2}).
\end{align}
Summing \eqref{eq:lattice-count} over $1\leq k<\alpha_tR$ and using the definitions of $\alpha_t$ and $n_R$, we obtain
\begin{align}
 \sum_{x\in B_R}\norm{x}_1
 =\frac{d2^d}{(d+1)!}(\alpha_tR)^{d+1}+O(R^d)
 =\frac{dn_R}{2\theta}+O(R^d). \label{eq:radball}
\end{align}
Combining \eqref{eq:radmart} at time $n_R$ with \eqref{eq:drift-comp} and \eqref{eq:radball}, we therefore obtain
\begin{align*}
 \sum_{x\in\Rcal_{n_R}}\norm{x}_1-\sum_{x\in B_R}\norm{x}_1
 &=\frac{d}{2\theta}M_{n_R}+O_{\P}\left(R^d(\log R)^d\right)=O_{\P}\left(R^{(d+3)/2}+R^d(\log R)^d\right).
\end{align*}
\end{proof}

\subsection{Green function representation of local times}

Set $q(x,y):=K(x,y)-P(x,y)$. For a finitely supported function $s$, write $\langle s,f\rangle:=\sum_{x\in \Z^d}s(x)f(x)$, and let $\delta_x(y):=\1_{\{y=x\}}$. Define the finitely supported functions $a_n,\vartheta_n,\varsigma_n$ by
\begin{align}
 \begin{aligned}
 \langle a_n,f\rangle&:=\sum_{x\in\Rcal_{n-1}}\sum_{y\in \Z^d} q(x,y)f(y),\\
 \langle \vartheta_n,f\rangle&:=\sum_{j=0}^{n-1}\bigl(f(X_{j+1})-\E[f(X_{j+1})\mid\F_j]\bigr),\\
 \varsigma_n&:=\delta_0-\delta_{X_n}.
 \end{aligned}
 \label{eq:poisson-terms}
\end{align}
These identities hold for every real-valued function $f$ on $\Z^d$, since all sums are finite. Each of $a_n,\vartheta_n,\varsigma_n$ has sum zero. By telescoping
$f(X_{j+1})-f(X_j)$ and using the transition rule, we obtain the Poisson equation
\begin{align}
\langle (I-P)\ell_n, f\rangle = \langle \varsigma_n, f \rangle +\langle a_n, f \rangle+ \langle \vartheta_n, f \rangle,
 \label{eq:poisson}
\end{align}
for every real-valued function $f$ on $\Z^d$.

For $d\geq3$, let $G$ be the Green function of simple random walk on $\Z^d$:
\begin{align*}
 G(x):=\sum_{k=0}^{\infty}P^k(0,x).
\end{align*}
For $d=2$, we define 
\begin{align*}
 G(x)=\lim_{N\to\infty}\sum_{k=0}^{N} \bigl(P^k(0,x)-P^k(0,0)\bigr).
\end{align*}
This limit exists (see \cite[equation~(4.15)]{LawlerLimic}). In both cases, $(I-P)G=\delta_0$ (see \cite[Section~4.3 and Proposition~4.4.2]{LawlerLimic}).
By Theorems~4.3.1 and~4.4.4 and Corollaries~4.3.3 and~4.4.5 in \cite{LawlerLimic}, for every $z\in\Z^d$ and $e\in\{\pm e_1,\ldots,\pm e_d\}$, we have
\begin{align}\label{eq:Green-inq}
 \abs{G(z+e)-G(z)} \leq C(1+\norm z_2)^{1-d}, \quad  \abs{G(z)}\leq
 \begin{cases}
 C(1+\log(2+\norm z_2)),&d=2,\\
 C,&d\geq3.
 \end{cases}
\end{align}
For functions $f,g$ on $\Z^d$, we define their convolution by
\begin{align*}
 (f*g)(x):=\sum_{y\in\Z^d}f(y)g(x-y),
\end{align*}
whenever the sum is absolutely convergent. In particular, the sum is finite if either function has finite support. For a finite set $A\subset\Z^d$, define the potential
\begin{align*}
 V_A(z):=\sum_{x\in A}\sum_{y\in \Z^d}q(x,y)G(z-y).
\end{align*}

\begin{proposition}[Green function representation]\label{lem:green-representation}
For finite subsets $A,B\subset\Z^d$, we have
\begin{align*}
 \sup_{z\in \Z^d}\abs{V_A(z)-V_B(z)} \leq C\abs{A\mathbin{\triangle}B}^{1/d}.
\end{align*}
For every $n\geq0$ and $z\in\Z^d$,
\begin{align}
 \ell_n(z)  =V_{\Rcal_{n-1}}(z)  +(G*\vartheta_n)(z)+G(z)-G(z-X_n).
 \label{eq:green-representation}
\end{align}
\end{proposition}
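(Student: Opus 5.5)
Both assertions reduce to properties of $q$ and $G$.

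\emph{Potential difference bound.} Write $V_A(z)-V_B(z)=\sum_{x\in A\setminus B}\phi_x(z)-\sum_{x\in B\setminus A}\phi_x(z)$ with
\begin{align*}
 \phi_x(z):=\sum_y q(x,y)G(z-y).
\end{align*}
Since $\sum_y q(x,y)=0$ and $q(x,\cdot)$ is supported on the neighbours of $x$ with $\abs{q}\le1$, we get
\begin{align*}
 \phi_x(z)=\sum_{y\sim x}q(x,y)\bigl(G(z-y)-G(z-x)\bigr).
\end{align*}
The gradient bound \eqref{eq:Green-inq} then gives
\begin{align*}
 \abs{\phi_x(z)}\le C(1+\norm{z-x}_2)^{1-d}.
\end{align*}
Set $D=A\mathbin{\triangle}B$ and $m=\abs D$. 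We need $\sum_{x\in D}(1+\norm{z-x}_2)^{1-d}$ uniformly in $z$. This is a rearrangement problem: the sum is largest when $D$ fills a Euclidean ball around $z$ with $m$ lattice points, of radius $r\asymp m^{1/d}$. Comparing with the integral,
\begin{align*}
 \sum_{x\in D}(1+\norm{z-x}_2)^{1-d}\le C\sum_{k\le Cm^{1/d}}k^{d-1}k^{1-d}\le Cm^{1/d}.
\end{align*}
The shell counts satisfy $\operatorname{card}\{x:\norm{z-x}_2\in[k,k+1)\}\le Ck^{d-1}$. This gives the first assertion, and only finite sums occur.

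\emph{Representation \eqref{eq:green-representation}.} Convolve the Poisson equation \eqref{eq:poisson} with $G$. By translation invariance of $P$, for every finitely supported $s$ we have $(I-P)(G*s)=((I-P)G)*s=s$. As a distribution, \eqref{eq:poisson} says
\begin{align*}
 (I-P)\ell_n=\varsigma_n+a_n+\vartheta_n,
\end{align*}
with all terms finitely supported; here $P$ is symmetric, so $\langle(I-P)\ell_n,f\rangle=\langle\ell_n,(I-P)f\rangle$. Applying this to $f=G(z-\cdot)$ is legitimate because every sum involved is finite. This yields
\begin{align*}
 \langle \ell_n,(I-P)G(z-\cdot)\rangle=\langle \ell_n,\delta_z\rangle=\ell_n(z),
\end{align*}
and the right-hand side equals
\begin{align*}
 \langle\varsigma_n+a_n+\vartheta_n,G(z-\cdot)\rangle=G(z)-G(z-X_n)+V_{\Rcal_{n-1}}(z)+(G*\vartheta_n)(z),
\end{align*}
which is exactly \eqref{eq:green-representation}.

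Two points need checking. First, $\langle a_n,G(z-\cdot)\rangle=V_{\Rcal_{n-1}}(z)$ requires that $a_n$ only involve vertices whose cookie was actually used. The definition sums over all of $\Rcal_{n-1}$, including $0$ where $q(0,\cdot)=0$, and each cookie vertex in $\Rcal_{n-1}$ is left exactly once after its first visit, consistent with the telescoping. Second, $(I-P)G=\delta_0$ holds pointwise, including for $d=2$ with the renormalized $G$, since it is stated above.

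The main obstacle is justifying the formal duality $\langle(I-P)\ell_n,G(z-\cdot)\rangle=\ell_n(z)$ when $G$ is unbounded ($d=2$). I would handle it by noting that \eqref{eq:poisson} holds for every real $f$. Taking $f=G(z-\cdot)$ directly, the left side is a finite sum $\sum_x\ell_n(x)\,(I-P)f(x)$, which equals $\ell_n(z)$ by symmetry of $P$ and $(I-P)G=\delta_0$, so no convergence issue arises.
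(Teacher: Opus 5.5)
Your proposal is correct and follows the paper's proof. It uses the same reduction $\sum_y q(x,y)G(z-y)=\sum_y q(x,y)\bigl(G(z-y)-G(z-x)\bigr)$ with the gradient bound and a shell-count estimate to get $C\abs{A\mathbin{\triangle}B}^{1/d}$, and the same finite-sum duality $\langle(I-P)\ell_n,G(z-\cdot)\rangle=\langle\ell_n,(I-P)G(z-\cdot)\rangle=\ell_n(z)$ applied to the Poisson equation. The only difference is cosmetic: you bound $\sum_{x\in D}(1+\norm{z-x}_2)^{1-d}$ by a rearrangement argument, which tacitly uses that a ball of radius $Cm^{1/d}$ contains at least $m$ lattice points (and your shell count should read $C(k+1)^{d-1}$ to cover $k=0$). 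The paper instead splits $D$ at distance $\abs D^{1/d}$ and bounds the far part by $\abs D(1+\abs D^{1/d})^{1-d}\leq\abs D^{1/d}$, which needs only the upper shell count.
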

\begin{proof}
Since $\sum_{y\in \Z^d} q(x,y)=0$, we have
\begin{align*}
 \sum_{y\in \Z^d} q(x,y)G(z-y)  =\sum_{y\in \Z^d} q(x,y)\bigl(G(z-y)-G(z-x)\bigr).
\end{align*}
Note that the probability measures $P(x,\cdot)$ and $K(x,\cdot)$ are supported on the neighbors of $x$, and that $\sum_{y\in \Z^d}\abs{q(x,y)}\leq2$. Using \eqref{eq:Green-inq}, we therefore obtain
\begin{align*}
 \abs{\sum_{y\in \Z^d} q(x,y)G(z-y)}
 \leq C(1+\norm{z-x}_2)^{1-d}.
\end{align*}

For every $z\in\Z^d$ and integer $k\geq0$, we have
\begin{align}
 {\rm card}\{x\in\Z^d:k\leq\norm{z-x}_2<k+1\} \leq C(k+1)^{d-1}.
 \label{eq:norm.inq}
\end{align} 
Let $E\subset\Z^d$ be finite and nonempty. We estimate $\sum_{x\in E}(1+\norm{z-x}_2)^{1-d}$ by separating the vertices with $\norm{z-x}_2\leq\abs E^{1/d}$. Using
\eqref{eq:norm.inq}, we have
\begin{align*}
 \sum_{\substack{x\in E\\ \norm{z-x}_2\leq\abs E^{1/d}}} (1+\norm{z-x}_2)^{1-d} &\leq C\sum_{k=0}^{\lceil\abs E^{1/d}\rceil} (k+1)^{d-1}(k+1)^{1-d}\leq C\abs E^{1/d}.
\end{align*}
For the remaining vertices, we have
\begin{align*}
 \sum_{\substack{x\in E\\ \norm{z-x}_2>\abs E^{1/d}}} (1+\norm{z-x}_2)^{1-d} \leq\abs E(1+\abs E^{1/d})^{1-d} \leq\abs E^{1/d}.
\end{align*}
Consequently,
\begin{align}
 \sup_{z\in \Z^d}\sum_{x\in E}(1+\norm{z-x}_2)^{1-d}
 \leq C\abs E^{1/d}.
 \label{eq:esum}
\end{align}
Applying \eqref{eq:esum} with $E=A\mathbin{\triangle}B$, we obtain the first assertion of the lemma for $A\mathbin{\triangle}B\neq\emptyset$. If $A=B$, the assertion is immediate.

We next prove \eqref{eq:green-representation}. Since  $\ell_n$ has finite support and $P$ is symmetric, we have
\begin{align*}
 \sum_{x\in \Z^d}G(z-x)(I-P)\ell_n(x)  =\sum_{x\in \Z^d}\ell_n(x)(I-P)G(z-x) =\ell_n(z).
\end{align*}
Combining this identity with \eqref{eq:poisson} and \eqref{eq:poisson-terms}, we obtain
\begin{align*}
 \ell_n(z) &=\sum_{x\in \Z^d}G(z-x)\bigl(\varsigma_n(x)+a_n(x)+\vartheta_n(x)\bigr)=G(z)-G(z-X_n) +V_{\Rcal_{n-1}}(z)+(G*\vartheta_n)(z).
\end{align*}
This completes the proof of the lemma.
\end{proof}

Recall that $B_R:=\{x\in\Z^d:\norm{x}_1<\alpha_tR\}.$
\begin{lemma}\label{lem:extpot}
For every fixed $\varepsilon>0$, there exists a positive constant $C=C_{d,\theta,t,\varepsilon}$ such that, for every $R\geq1$, 
\begin{align*}
\sup_{\substack{z\in\Z^d\\\norm{z}_1\geq(\alpha_t+\varepsilon)R}}
 \abs{V_{B_R}(z)}\leq C.
\end{align*}
\end{lemma}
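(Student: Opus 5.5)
The plan is to exhibit an explicit discrete solution of the Poisson equation behind $V_{B_R}$ that vanishes outside $B_R$. Then $V_{B_R}$, evaluated outside the enlarged ball, reduces to the potential of a small error term. The heuristic is the shape result itself: the limiting local time $\theta(\alpha_t R-\norm{x}_1)_+$ is linear inside the ball and zero outside, so $V_{B_R}$ should be almost zero there. Set $a:=\sum_{x\in B_R}q(x,\cdot)$, so that $V_{B_R}=G*a$, and set
\[
u_R(x):=\theta\bigl(\lceil\alpha_tR\rceil-\norm{x}_1\bigr)_+ .
\]
The constant $\theta$ is fixed by matching bulk densities, as explained below. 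Since $u_R$ is finitely supported and $(I-P)G=\delta_0$, the symmetry argument in the proof of Proposition~\ref{lem:green-representation} gives $G*\bigl((I-P)u_R\bigr)=u_R$. Hence
\[
V_{B_R}=u_R+G*e,\qquad e:=a-(I-P)u_R .
\]
For $\norm{z}_1\geq(\alpha_t+\varepsilon)R$ we have $u_R(z)=0$. It therefore suffices to show that $\sup_{\norm z_1\geq(\alpha_t+\varepsilon)R}\abs{(G*e)(z)}\leq C$.

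Step 1 computes $a$ and $(I-P)u_R$ explicitly.
\begin{itemize}
\item At a vertex $y$ with $\norm{y}_1\leq\alpha_tR-2$ and all coordinates nonzero, $y$ receives mass $\theta/d$ from each of its $d$ outward neighbours and loses $\theta$ through $-P$. So $a(y)=0$, and $(I-P)u_R(y)=0$ by linearity of $u_R$ in the open orthant.
\item On a coordinate hyperplane $\{y_i=0\}$ inside the ball, the kink of $\norm{\cdot}_1$ gives $(I-P)u_R(y)=\theta/d$ for each vanishing coordinate. Meanwhile $a(y)$ receives inward mass from both $y\pm e_i$, weighted by $1/\kappa$. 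The choice of prefactor $\theta$ in $u_R$ is made so that these contributions cancel as far as possible.
\end{itemize}
The outcome should be that $e$ is supported on two sets. The first is the boundary layer $\{\alpha_tR-2\leq\norm{y}_1\leq\alpha_tR+1\}$, where $\abs e\leq C$. The second is a neighbourhood of lower-dimensional coordinate subspaces, where $\abs e\le C$ and the mismatch comes from $\kappa$ changing. Moreover $\sum_y e(y)=0$, since both $a$ and $(I-P)u_R$ have zero sum.

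Step 2 bounds $G*e$ at points $z$ at distance at least $c\varepsilon R$ from the support of $e$.
\begin{itemize}
\item The boundary layer has about $R^{d-1}$ points. Crude bounds are useless there: they give $R^{d-1}\abs{G}$, which is of order $R$ in $d\geq3$ and of order $R\log R$ in $d=2$. The cancellation I plan to use is that, along each line in a coordinate direction crossing the boundary of the $\ell^1$ ball, the values of $e$ in the layer sum to (nearly) zero. This should follow from telescoping $q$ along the line, as in the orthant computation.
\item Given this, summation by parts in the normal direction turns the layer contribution into a sum of $O(R^{d-1})$ terms of the form $G(z-y)-G(z-y')$ with $\norm{y-y'}_1\leq C$. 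Each such term is at most $C(\varepsilon R)^{1-d}$ by \eqref{eq:Green-inq}, so the total is $O_\varepsilon(1)$.
\item The leftover non-cancelling part lives on the edges of the boundary (where the boundary meets coordinate hyperplanes) and on lower-dimensional coordinate sets. It has at most $CR^{d-2}$ points, or carries a sum-zero structure. Using the oscillation bound for $G$ on scale $R$ from \eqref{eq:Green-inq}, together with $\sum e=0$, bounds it by $CR^{d-2}\cdot R\cdot R^{1-d}=O(1)$. I would treat $d=2$ and $d\ge3$ uniformly this way, since only gradient bounds of $G$ are used once the zero sums are available.
\end{itemize}

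The main obstacle is Step 1 near the coordinate hyperplanes and the boundary. There the weights $1/\kappa(x)$ change, and it is not clear that $\theta(\alpha_tR-\norm{x}_1)_+$ is the correct corrector. If the hyperplane mismatch does not vanish, $e$ would carry $\Theta(R^{d-1})$ mass on each hyperplane. The fallback is then to add to $u_R$ a second corrector supported on the ball (for instance, a multiple of $(\alpha_tR-\norm{x}_1)_+$ restricted to a neighbourhood of the hyperplanes), chosen so that the hyperplane error becomes a line-wise dipole. The Green-difference bound of Step 2 then applies to it as well.
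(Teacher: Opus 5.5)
\textbf{The gap: the corrector's prefactor is $\theta$, but it must be $2\theta$.} This is exactly where you placed the main obstacle. Put $m:=\lceil\alpha_tR\rceil$ and $u^{(c)}:=c\,(m-\norm{\cdot}_1)_+$. Take $y\in B_R$ with exactly one vanishing coordinate, say $y_1=0$, with $\norm{y}_1\leq m-2$ and $y$ not adjacent to the origin. Then:
\begin{itemize}
\item the neighbours $y\pm e_1$ have $\kappa=d$ and each send $\theta/d$ to $y$;
\item the other $d-1$ outward neighbours have $\kappa=d-1$ and send $\theta/(d-1)$ each;
\item the $-P$ part removes $\theta$.
\end{itemize}
So $a(y)=2\theta/d$, while $(I-P)u^{(c)}(y)=c/d$. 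Hence $e(y)=(2\theta-c)/d$. With your $c=\theta$, the error is $\theta/d$ at every point of every coordinate hyperplane inside the ball. This is mass of order $R^{d-1}$ that is not locally cancelled.

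Concretely, take an $e_1$-line whose other coordinates are fixed and nonzero. On it, $e$ is $\theta/d$ at $y_1=0$, $-\theta/2$ at the two points with $\norm{y}_1=m-1$, and $0$ at the two points with $\norm{y}_1=m$. The line sum is therefore $-\theta(1-1/d)\neq0$. The compensating mass lies on the other hyperplanes, at distance of order $R$, so the gradient bounds of Step~2 only give $R^{d-1}\cdot R\cdot R^{1-d}=R$.

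Note that $G*(I-P)u^{(c)}=u^{(c)}$ vanishes outside $B_R$, so $(G*e)(z)=V_{B_R}(z)$ at exterior $z$ for every $c$. The wrong $c$ does not change the quantity; it destroys the local dipole structure your estimates rely on. ``Matching bulk densities'' cannot determine $c$, because in the open orthants both $a$ and $(I-P)u^{(c)}$ vanish for every $c$. Your heuristic is also off by a factor $2$: the limiting local time is $2\theta(\alpha_t-\norm{z}_1)_+$ (Proposition~\ref{thm:identification}), which has total mass $t$. Your fallback, a corrector restricted to a neighbourhood of the hyperplanes, would create jumps of order $R$ at the edge of that neighbourhood, so it does not repair this.

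\textbf{The repair is $c=2\theta$.} Then the error term behaves as follows:
\begin{itemize}
\item $e$ vanishes on the codimension-one coordinate sets;
\item at vertices with all coordinates nonzero, $e=-\theta/2$ on $\{\norm{y}_1=m-1\}$ and $e=+\theta/2$ on $\{\norm{y}_1=m\}$; pairing $y$ with $y+\operatorname{sgn}(y_1)e_1$ gives $O(R^{d-1})$ adjacent dipoles;
\item $e$ is $O(1)$ on the remaining $O(R^{d-2})$ vertices, and its total there is zero. These are the coordinate sets of codimension at least two (for instance $e=4\theta/(d(d-1))$ where exactly two coordinates vanish, $d\geq3$), the intersections of the two shells with the hyperplanes, and a neighbourhood of the origin.
\end{itemize}
Using the oscillation $O_\varepsilon(R^{2-d})$ of $G(z-\cdot)$ on $\{\norm{x}_1\leq m\}$, your Step~2 then closes.

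\textbf{Comparison with the paper.} The paper runs the same identity in edge form, which avoids your obstacle altogether. By harmonicity of $G(z-\cdot)$ on $B_R$, it writes $V_{B_R}(z)=\theta\sum_{x\in B_R\setminus\{0\}}\kappa(x)^{-1}\sum_{i:x_i\neq0}\bigl(G(z-x+\operatorname{sgn}(x_i)e_i)-G(z-x)\bigr)$. It then compares this edge by edge with the summation-by-parts identity $\frac{\theta}{d}\sum_x\sum_i\bigl(f(x+e_i)-f(x)\bigr)\bigl(G(z-x-e_i)-G(z-x)\bigr)=2\theta f(z)=0$, where $f=(\alpha_tR-\norm{x}_1)_+$. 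The per-edge flux $\theta/d$ fixes the factor $2\theta$ automatically. The discrepancies (vertices with $\kappa(x)<d$, and the boundary layer) lie on $O(R^{d-1})$ edges, each already multiplied by a Green difference of size $O(R^{1-d})$. So the paper needs no pairing, no zero-sum argument and no codimension-$k$ bookkeeping.
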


\begin{proof}
Fix $R\geq1$ and $z\in\Z^d$ with $\norm{z}_1\geq(\alpha_t+\varepsilon)R$, and define $f(x):=(\alpha_tR-\norm{x}_1)_+$. All constants below depend only on $d,\theta,t,\varepsilon$. Since $f$ has finite support and $(I-P)G=\delta_0$, we expand the differences and reindex the finite sums to obtain
\begin{align}
 \begin{aligned}
 &\sum_{x\in\Z^d}\sum_{i=1}^d
 \bigl(f(x+e_i)-f(x)\bigr)
 \bigl(G(z-x-e_i)-G(z-x)\bigr)\\
 &\quad=2d\sum_{x\in\Z^d}f(x)(I-P)G(z-x)
 =2df(z)=0.
 \end{aligned}
 \label{eq:exsbp}
\end{align}

For $\norm{x}_1\leq \alpha_tR+1$, we have $1+\norm{z-x}_2\geq\varepsilon R/\sqrt d$. Using \eqref{eq:Green-inq}, we therefore obtain
\begin{align}
 \abs{G(z-x+e)-G(z-x)}\leq CR^{1-d},
 \quad e\in\{\pm e_1,\ldots,\pm e_d\}.
 \label{eq:extgr}
\end{align}
Since $z\notin B_R$, we have $PG(z-x)=G(z-x)$ for every $x\in B_R$. By \eqref{eq:cookie-kernel} and $q(0,y)=0$, we obtain
\begin{align*}
 V_{B_R}(z)
 =\theta\sum_{x\in B_R\setminus\{0\}}\frac1{\kappa(x)}
 \sum_{i:x_i\neq0}
 \bigl(G(z-x+\operatorname{sgn}(x_i)e_i)-G(z-x)\bigr).
\end{align*}
At most $d(2\alpha_tR+1)^{d-1}\leq CR^{d-1}$ vertices of $B_R$ have a zero coordinate. At every vertex with all coordinates nonzero, $\kappa(x)=d$. By \eqref{eq:extgr}, replacing $1/\kappa(x)$ by $1/d$ in this sum changes its value by at most $C$.

For every $x\in\Z^d$ and $1\leq i\leq d$, we have $\norm{x+e_i}_1=\norm{x}_1-1$ when $x_i<0$ and $\norm{x+e_i}_1=\norm{x}_1+1$ when $x_i\geq0$. From the definition of $f$, we consequently obtain
\begin{align*}
 \abs{
 \1_{\{x_i<0,\,x\in B_R\}}
 -\1_{\{x_i\geq0,\,x+e_i\in B_R\}}
 -f(x+e_i)+f(x)}
 \leq\1_{\{\abs{\norm{x}_1-\alpha_tR}\leq1\}}.
\end{align*}
Using ${\rm card}\{x\in\Z^d:\norm{x}_1=k\}\leq2^d\binom{k+d-1}{d-1}$ for integers $k\geq0$, we count at most $CR^{d-1}$ vertices satisfying $\abs{\norm{x}_1-\alpha_tR}\leq1$. We reindex the terms with $x_i>0$ by writing their vertices as $x+e_i$, and apply \eqref{eq:extgr} to obtain
\begin{align*}
 \left|
 V_{B_R}(z)
 -\frac{\theta}{d}\sum_{x\in\Z^d}\sum_{i=1}^d
 \bigl(f(x+e_i)-f(x)\bigr)
 \bigl(G(z-x-e_i)-G(z-x)\bigr)
 \right|\leq C.
\end{align*}
By \eqref{eq:exsbp}, the above double sum is zero. We conclude that $\abs{V_{B_R}(z)}\leq C$, uniformly over $R$ and $z$.
\end{proof}

\section{Scaling limits of the range}\label{sec:scaling}

\subsection{Rescaled local times}

Fix $t>0$. Recall that $n_R:=\lfloor tR^{d+1}\rfloor$ and define the piecewise constant function
\begin{align}
 u_R(z):=\frac{\ell_{n_R}(x)}{R}\quad\text{for }  \quad z\in\frac{x+[0,1)^d}{R}.
 \label{eq:rescaled-local-time}
\end{align}
Let
\begin{align}
 D_R:=\bigcup_{x\in\Rcal_{n_R}}\frac{x+[0,1)^d}{R}.
 \label{eq:rescaled-range}
\end{align}

\begin{lemma}\label{cor:local-time-second-moment}
For every $t>0$, there is $C=C(d,t)<\infty$ such that
\begin{align*}
 \sup_{R\geq1}\E\int_{\R^d}u_R(z)^2\,\rmd z\leq C.
\end{align*}
\end{lemma}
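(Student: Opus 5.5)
Since $u_R$ is piecewise constant on cubes of volume $R^{-d}$, the statement amounts to showing
\begin{align*}
\int_{\R^d}u_R(z)^2\rmd z=R^{-d-2}\sum_{x\in\Z^d}\ell_{n_R}(x)^2,
\end{align*}
so it suffices to prove $\E\sum_x\ell_{n_R}(x)^2\leq CR^{d+2}$. The plan is to control the second moment of each local time by conditioning at the first visit and applying the occupation estimate of Proposition~\ref{prop:occupation} twice. Then summing over $x$ reduces everything to the expected range size, Proposition~\ref{prop:range-moment}. This mirrors how \eqref{eq:occwt} was obtained in the proof of Proposition~\ref{prop:tail}.

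Write $\ell:=\ell_{n_R}(x)$. Use the identity $\ell^2=2\sum_{j<k<n_R}\1_{\{X_j=x,X_k=x\}}+\ell$, so that
\begin{align*}
\ell^2\leq 2\sum_{j<n_R}\1_{\{X_j=x\}}\sum_{k=j}^{n_R-1}\1_{\{X_k=x\}}.
\end{align*}
Take expectations and condition on $\F_j$ inside. Proposition~\ref{prop:occupation} (valid at deterministic times, for any history) bounds the inner conditional expectation by $C(n_R-j)^{1/(d+1)}\leq CR$, using $n_R\leq tR^{d+1}$. Hence $\E\ell^2\leq CR\,\E\ell$.

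Next, as in the proof of Proposition~\ref{prop:tail}, decompose over the first visit time. Applying Proposition~\ref{prop:occupation} at the stopping time of the first visit gives $\E\ell_{n_R}(x)\leq CR\,\P(x\in\Rcal_{n_R-1})$. Combining the two bounds,
\begin{align*}
\E\sum_x\ell_{n_R}(x)^2\leq CR^2\,\E\abs{\Rcal_{n_R}}\leq CR^2\bigl(1+n_R/\theta\bigr)^{d/(d+1)}\leq CR^{d+2},
\end{align*}
by Proposition~\ref{prop:range-moment} together with \eqref{eq:mean-range}. This yields the claimed uniform bound, with a constant depending on $d$, $t$ and, through Proposition~\ref{prop:range-moment}, on $\theta$ (which is fixed).

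There is no real obstacle here. The only care needed is the diagonal term and the fact that the conditional occupation bound holds uniformly over histories, which Proposition~\ref{prop:occupation} explicitly provides. For $n_R=0$ the statement is trivial, since then $\ell_0\equiv0$.
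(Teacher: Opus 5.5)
Your argument is correct. Its first step is exactly the paper's: split the diagonal and off-diagonal terms and apply Proposition~\ref{prop:occupation} at deterministic times to get $\E\ell_{n_R}(x)^2\le CR\,\E\ell_{n_R}(x)$.

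The two routes differ only in how you sum over $x$. The paper uses the identity $\sum_x\ell_{n_R}(x)=n_R\le tR^{d+1}$. This gives $\sum_x\E\ell_{n_R}(x)^2\le Cn_R^{1+1/(d+1)}\le Ct^{(d+2)/(d+1)}R^{d+2}$, where $C=C(d)$ comes only from Proposition~\ref{prop:occupation}. You instead apply the occupation bound a second time at the first-visit time and then invoke Proposition~\ref{prop:range-moment}.

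Both routes give the order $R^{d+2}$, since $R^2\,\E\abs{\Rcal_{n_R}}$ and $R\,n_R$ are comparable. Your detour, however, brings in a factor $(1+t/\theta)^{d/(d+1)}$. Strictly, you prove the bound with $C=C(d,t,\theta)$, which degenerates as $\theta\downarrow0$, not the $\theta$-free $C(d,t)$ in the statement. Since $\theta$ is fixed this is harmless for every later use, but the one-line identity is both shorter and sharper.

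What your route buys is the per-site bound $\E\ell_{n_R}(x)^2\le CR^2\,\P(x\in\Rcal_{n_R})$. Weighted by a function $g$ as in \eqref{eq:occwt} and combined with \eqref{eq:tail}, it would also localize the second moment: $\E\int_{\{\norm{z}_1>L\}}u_R(z)^2\rmd z$ would be bounded uniformly in $R$ by a quantity decaying exponentially in $L$. The global identity $\sum_x\ell_{n_R}(x)=n_R$ cannot give this, but the lemma does not need it.
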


\begin{proof}
If $n_R=0$, then $u_R=0$. We therefore assume $n_R\geq1$.
Fix $x\in\Z^d$. For $n\ge 1$, we have
\begin{align*}
 \E\ell_n(x)^2
 =\E\ell_n(x)
 +2\sum_{i=0}^{n-2}
 \E\left[
 \1_{\{X_i=x\}}\sum_{j=i+1}^{n-1}\1_{\{X_j=x\}}
 \right].
\end{align*}
Applying Proposition~\ref{prop:occupation}, with $m=i$ and $h=n-i$, we have
\begin{align*}
 \E\left[
 \left.\sum_{j=i+1}^{n-1}\1_{\{X_j=x\}}
 \right|\F_i\right]\leq Cn^{1/(d+1)}.
\end{align*}
Consequently, we obtain
\begin{align*}
 \E\ell_n(x)^2
 \leq \bigl(1+Cn^{1/(d+1)}\bigr)\E\ell_n(x).
\end{align*}
Summing the above inequality over $x$ and using the fact that $\sum_{x\in \Z^d}\ell_n(x)=n$, we get
\begin{align}\label{eq:mean-loc}
 \sum_{x\in\Z^d}\E\ell_n(x)^2
 \leq Cn^{1+1/(d+1)}.
\end{align}
Using \eqref{eq:rescaled-local-time} and applying \eqref{eq:mean-loc} with $n=n_R$, we have
\begin{align*}
 \E\int_{\R^d}u_R(z)^2\,\rmd z
 =R^{-(d+2)}\sum_{x\in\Z^d}\E\ell_{n_R}(x)^2\leq C,
\end{align*}
This completes the proof of the lemma.
\end{proof}

\subsection{The weak equation and subsequential limits}

Let $C_c^\infty(\R^d)$ be the space of smooth functions with compact support on $\R^d$. For $\varphi\in C_c^\infty(\R^d)$, define
\begin{align*}
 \mathcal E_R(\varphi):=\frac1{2d}\int_{\R^d}u_R(z)\Delta\varphi(z)\,\rmd z-\frac{\theta}{d}\int_{D_R}\nabla\norm{z}_1\mathbin{\cdot}\nabla\varphi(z)\,\rmd z.
\end{align*}

\begin{proposition}[Weak equation]\label{prop:quantitative-equation}
For every $\varphi\in C_c^\infty(\R^d)$,
$\mathcal E_R(\varphi)\to0$ in probability as $R\to\infty$.
\end{proposition}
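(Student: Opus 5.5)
Test the Poisson equation \eqref{eq:poisson} at $n=n_R$ against the lattice function $f(x):=\varphi(x/R)$ and rescale. Each term of \eqref{eq:poisson} will be matched with a term of $\mathcal E_R(\varphi)$ or shown to be negligible after division by a suitable power of $R$.

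\textbf{Left side.} By Taylor expansion, $(I-P)f(x)=-\frac{1}{2dR^2}\Delta\varphi(x/R)+O(R^{-4})$, with the error supported in a set of size $O(R^d)$. Hence
\[
\langle (I-P)\ell_{n_R},f\rangle=-\frac{1}{2d}R^{-2}\sum_x\ell_{n_R}(x)\Delta\varphi(x/R)+O(n_RR^{-4}).
\]
Since $R^{-2}\sum_x\ell_{n_R}(x)\Delta\varphi(x/R)=R^{d-1}\int u_R\,\Delta\varphi\,\rmd z+O\big(R^{d-2}\int_{K}u_R\big)$, where $K$ is a fixed compact set containing a neighbourhood of $\operatorname{supp}\varphi$ and the error uses the Lipschitz bound on $\Delta\varphi$, Lemma~\ref{cor:local-time-second-moment} and Cauchy--Schwarz give $\int_K u_R=O_{\P}(1)$. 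So after multiplying by $R^{1-d}$, the left side equals $-\frac1{2d}\int u_R\Delta\varphi+o_{\P}(1)$.

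\textbf{Cookie term.} For $x\neq0$, $\sum_y q(x,y)f(y)=(K-P)f(x)=\frac{\theta}{\kappa(x)}\sum_{i:x_i\ne0}\big(f(x-\operatorname{sgn}(x_i)e_i)-f(x)\big)$ up to a $\theta$-multiple of $(P-I)f(x)=O(R^{-2})$. The first-order part is $-\frac{\theta}{\kappa(x)R}\sum_{i:x_i\neq0}\operatorname{sgn}(x_i)\partial_i\varphi(x/R)+O(R^{-2})$. Off the coordinate hyperplanes $\kappa(x)=d$, and this equals $-\frac{\theta}{dR}\nabla\norm{z}_1\cdot\nabla\varphi(z)$ at $z=x/R$. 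The vertices of $\Rcal_{n_R-1}\cap R\operatorname{supp}\varphi$ lying on a coordinate hyperplane number $O(R^{d-1})$, each contributing $O(R^{-1})$, for a total $O(R^{d-2})$. The $O(R^{-2})$ errors, summed over the at most $CR^d$ relevant sites, give $O(R^{d-2})$. Replacing the sum over $x\in\Rcal_{n_R-1}$ by the integral over $D_R$ costs $O(R^{-2})$ per cell, which is again $O(R^{d-2})$, plus at most one extra cell for $X_{n_R}$. Since $\nabla\norm{z}_1$ jumps only across the hyperplanes, which meet $O(R^{d-1})$ cells, this replacement is harmless. Hence
\[
R^{1-d}\langle a_{n_R},f\rangle=-\frac{\theta}{d}\int_{D_R}\nabla\norm z_1\cdot\nabla\varphi\,\rmd z+O(R^{-1}).
\]

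\textbf{Boundary term.} $R^{1-d}\langle\varsigma_{n_R},f\rangle=O(R^{1-d}\norm\varphi_\infty)\to0$.

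\textbf{Martingale term.} $\langle\vartheta_{n_R},f\rangle$ is a martingale with increments $f(X_{j+1})-\E[f(X_{j+1})\mid\F_j]$ bounded by $\norm{\nabla\varphi}_\infty/R$. Each increment also vanishes unless $X_j$ lies within distance $1$ of $R\operatorname{supp}\varphi$. Its second moment is therefore at most $CR^{-2}\,\E\sum_{j<n_R}\1_{\{X_j\in RK\}}$. By \eqref{eq:occwt} with $g=\1_{RK}$, this is at most $CR^{-2}\cdot R\cdot CR^d=CR^{d-1}$. Thus $\langle\vartheta_{n_R},f\rangle=O_{\P}(R^{(d-1)/2})$, and after multiplying by $R^{1-d}$ it is $O_{\P}(R^{(1-d)/2})\to0$ because $d\ge2$.

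\textbf{Conclusion.} Combining the four terms in \eqref{eq:poisson} multiplied by $R^{1-d}$ gives $-\mathcal E_R(\varphi)=o_{\P}(1)$, where the two signs match because both main terms come with a minus. The step that needs the most care is the cookie term, namely justifying the $\kappa(x)=d$ replacement and the lattice-to-continuum comparison on the set $D_R$, which is random and irregular. It is handled by the per-cell Taylor error plus the $O(R^{d-1})$ count of bad cells, so no regularity of $D_R$ is needed.
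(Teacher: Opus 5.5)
Your proposal is correct and follows essentially the same route as the paper. You test the Poisson equation \eqref{eq:poisson} against $f(x)=\varphi(x/R)$, Taylor-expand $(P-I)f$ and $(K-P)f$, discard the $O(R^{d-1})$ vertices with a zero coordinate, compare cells with $D_R$, and control the martingale term by its second moment.

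The only differences are cosmetic. The paper bounds both the Lipschitz error and the martingale variance directly via $\sum_x\ell_{n_R}(x)=n_R\leq tR^{d+1}$. This gives the same $R^{d-1}$ order for the martingale variance and the $L^2$ bound $\E\abs{\mathcal E_R(\varphi)}^2\leq C(R^{-2}+R^{1-d})$. Your appeals to Lemma~\ref{cor:local-time-second-moment} and \eqref{eq:occwt} are therefore valid but unnecessary.
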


\begin{proof}
Set $f(x):=\varphi(x/R)$. By Taylor's formula and the symmetry of $P$, we have that, uniformly in $x\in\Z^d$,
\begin{align} \label{eq:plexp}
 (P-I)f(x)=\frac{\Delta\varphi(x/R)}{2dR^2}+O(R^{-4}).
\end{align}
By \eqref{eq:cookie-kernel} and Taylor's formula, uniformly over vertices whose coordinates are all nonzero,
\begin{align}
 (K-P)f(x)=-\frac{\theta}{dR}\nabla\norm{x}_1\mathbin{\cdot}\nabla\varphi(x/R)+O(R^{-2}).
 \label{eq:ckexp}
\end{align}
For every $x$, we also have $\abs{(K-P)f(x)}\leq C/R$. Since $\varphi$ has compact support, there exists $M\geq1$ such that $\varphi(z)=0$ whenever $\norm{z}_1>M$. Both $(K-P)f(x)$ and $\nabla\varphi(x/R)$ vanish when $\norm{x}_1>MR+1$, and, for $R\geq1$,
\begin{align}
 \begin{aligned}
 {\rm card}\{x\in\Z^d:\norm{x}_1\leq MR+d\}&\leq CR^d,\\
 \sum_{i=1}^d{\rm card}\{x\in\Z^d:\norm{x}_1\leq MR+d,\ x_i=0\}&\leq CR^{d-1}.
 \end{aligned}
 \label{eq:ckcnt}
\end{align}
Using \eqref{eq:ckexp} at vertices with all coordinates nonzero, the bound $C/R$ at the remaining vertices, and \eqref{eq:ckcnt}, we obtain
\begin{align} \label{eq:ckerr}
 \Bigg|  R^{1-d}\langle a_{n_R},f\rangle  +\frac{\theta}{d}R^{-d}  \sum_{\substack{x\in\Rcal_{n_R-1}\\x_i\neq0\text{ for all }i}} \nabla\norm{x}_1\mathbin{\cdot}\nabla\varphi(x/R)
 \Bigg| &\leq CR^{1-d}\bigl(R^{-2}R^d+R^{-1}R^{d-1}\bigr) \leq C/R.
\end{align}
Since $\Delta\varphi$ is Lipschitz and each cube in \eqref{eq:rescaled-local-time} has Euclidean diameter $\sqrt d/R$, it follows from \eqref{eq:plexp} that, uniformly in $x\in\Z^d$,
\begin{align*}
 \left|(P-I)f(x)-\frac{R^{d-2}}{2d}\int_{(x+[0,1)^d)/R}\Delta\varphi(z)\,\rmd z\right|
 \leq CR^{-3}.
\end{align*}
By \eqref{eq:rescaled-local-time} and $\sum_{x\in\Z^d}\ell_{n_R}(x)=n_R\leq tR^{d+1}$, we thus have
\begin{align}
 \begin{aligned}
 &\left|R^{1-d}\sum_{x\in\Z^d}\ell_{n_R}(x)(P-I)f(x)-\frac1{2d}\int_{\R^d}u_R(z)\Delta\varphi(z)\,\rmd z\right|\leq Cn_RR^{-d-2}\leq C/R.
 \end{aligned}
 \label{eq:lperr}
\end{align}

If all coordinates of $x$ are nonzero, then $\nabla\norm{z}_1=\nabla\norm{x}_1$ almost everywhere on $(x+[0,1)^d)/R$. Since $\nabla\varphi$ is Lipschitz and vanishes outside $\{z:\norm{z}_1\leq M\}$, we have
\begin{align*}
 &\left|\int_{(x+[0,1)^d)/R}\nabla\norm{z}_1\mathbin{\cdot}\nabla\varphi(z)\,\rmd z-R^{-d}\nabla\norm{x}_1\mathbin{\cdot}\nabla\varphi(x/R)\right|\leq CR^{-d-1}\1_{\{\norm{x}_1\leq MR+d\}}.
\end{align*}
By \eqref{eq:ckcnt}, at most $CR^d$ cubes contribute to this error, and the contributing cubes whose vertices have a zero coordinate have total volume at most $CR^{-d}R^{d-1}$. Moreover, $\abs{\Rcal_{n_R}\setminus\Rcal_{n_R-1}}\leq1$. Combining these bounds with \eqref{eq:ckerr} and \eqref{eq:rescaled-range}, we obtain
\begin{align}
 \begin{aligned}
 &\left|R^{1-d}\langle a_{n_R},f\rangle+\frac{\theta}{d}\int_{D_R}\nabla\norm{z}_1\mathbin{\cdot}\nabla\varphi(z)\,\rmd z\right|\leq C\bigl(R^{-1}+R^{-d-1}R^d+R^{-d}R^{d-1}+R^{-d}\bigr)\leq C/R.
 \end{aligned}
 \label{eq:drerr}
\end{align}

By \eqref{eq:poisson} and the symmetry of $P$, we have
\begin{align*}
 \sum_{x\in\Z^d}\ell_{n_R}(x)(P-I)f(x)+\langle a_{n_R},f\rangle
 =-\langle \vartheta_{n_R},f\rangle-\langle \varsigma_{n_R},f\rangle.
\end{align*}
Since $d\geq2$, we have $R^{1-d}\abs{\langle \varsigma_{n_R},f\rangle}\leq2\norm{\varphi}_\infty R^{1-d}\leq C/R$. Combining this bound with \eqref{eq:lperr} and \eqref{eq:drerr}, we obtain
\begin{align}
 \abs{\mathcal E_R(\varphi)+R^{1-d}\langle \vartheta_{n_R},f\rangle}\leq C/R.
 \label{eq:weak-equation-error}
\end{align}

Note that $|f(X_{j+1})-f(X_j)|
\leq \sup_{z\in\mathbb R^d}\|\nabla\varphi(z)\|_1\|X_{j+1}-X_j\|_1/R \leq {C}/{R}$, and thus
\begin{align*}
 \operatorname{Var}(f(X_{j+1})\mid\F_j)
 \leq\E\left[\left.\abs{f(X_{j+1})-f(X_j)}^2\right|\F_j\right]
 \leq C/R^2.
\end{align*}
By \eqref{eq:poisson-terms} and orthogonality of martingale increments, we therefore obtain
\begin{align}
 \E\abs{\langle \vartheta_{n_R},f\rangle}^2
 =\sum_{j<n_R}\E\operatorname{Var}(f(X_{j+1})\mid\F_j)
 \leq Cn_R/R^2.
 \label{eq:mtvar}
\end{align}
Combining \eqref{eq:weak-equation-error} and \eqref{eq:mtvar}, we obtain
\begin{align*}
 \E\abs{\mathcal E_R(\varphi)}^2
 \leq CR^{-2}+CR^{2-2d}\frac{n_R}{R^2}
 \leq C(R^{-2}+R^{1-d})\longrightarrow0.
\end{align*}
Hence $\mathcal E_R(\varphi)\to0$ in probability.
\end{proof}

For an open set $U\subset \R^d$, let $H^1(U)$ denote the Sobolev space of square-integrable functions $f$ whose weak partial derivatives $\partial_i f$, for $1\le i\le d$, are square-integrable in $U$. For measurable functions $f,g$ on $\R^d$, we define their convolution by
\begin{align*}
 (f*g)(z):=\int_{\R^d}f(y)g(z-y)\rmd y,
\end{align*}
whenever the integral is absolutely convergent.

\begin{proposition}[Weak subsequential limits]\label{thm:weak-limits}
For every fixed $t>0$, the laws of the pairs of finite measures $\bigl(u_R(z)\,\rmd z,\,\1_{D_R}(z)\,\rmd z\bigr)$, with $R\geq1$, are tight for weak convergence of finite Borel measures on $\R^d$.
Every subsequential limit as $R\to\infty$ has, almost surely, the form $\bigl(u(z)\,\rmd z,\,\rho(z)\,\rmd z\bigr)$, where $u\in H^1(\R^d)$ is nonnegative almost everywhere, $\rho\in L^1(\R^d)$ satisfies $0\leq\rho\leq1$ almost everywhere, and $\int_{\R^d}u(z)\,\rmd z=t$.
Almost surely, for every $\varphi\in C_c^\infty(\R^d)$, we have
\begin{align}
 \frac1{2d}\int_{\R^d}u(z)\Delta\varphi(z)\,\rmd z -\frac{\theta}{d}\int_{\R^d}\rho(z)\nabla\norm{z}_1\mathbin{\cdot}\nabla\varphi(z)\,\rmd z =0.
 \label{eq:relaxed-weak-equation}
\end{align}
\end{proposition}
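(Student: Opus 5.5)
The plan is to work on the product of two copies of the space of finite Borel measures on $\R^d$ with the weak topology (a Polish space), get tightness from mass and tail bounds, and then pass to a probability space carrying almost surely convergent copies via the Skorokhod representation theorem.

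\emph{Tightness.} The total masses are controlled directly. We have $\int u_R=n_R/R^{d+1}\le t$, and by Proposition~\ref{prop:range-moment}, $\E\int\1_{D_R}=\E\abs{\Rcal_{n_R}}/R^d\le C$. For mass at infinity, take $L\ge bd$.
\begin{itemize}
\item For the range, \eqref{eq:tail} with $L$ replaced by $LR$ gives $\E\int_{\{\norm{z}_1\ge L+d\}}\1_{D_R}\le Ce^{-\lambda L/d}$.
\item For the local times, apply \eqref{eq:occwt} with $g=\1_{\{\norm{x}_1\ge LR\}}$. This gives $\E\sum_{j<n_R}g(X_j)\le C_0R\,\E\,{\rm card}\{x\in\Rcal_{n_R}:\norm{x}_1\ge LR\}\le CR^{d+1}e^{-\lambda L/d}$. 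Hence $\E\int_{\{\norm z_1\ge L+d\}}u_R\le Ce^{-\lambda L/d}$.
\end{itemize}
Markov's inequality then shows that, with probability close to one, both measures have bounded mass and small mass outside a large $\ell^1$ ball. This yields tightness of the laws of the pairs.

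\emph{Structure of limits.} Fix a subsequence along which the laws converge, and use Skorokhod to get almost sure weak convergence $(u_R\rmd z,\1_{D_R}\rmd z)\to(\mu,\nu)$.
\begin{itemize}
\item Since $\1_{D_R}\rmd z\le\rmd z$, we get $\nu\le$ Lebesgue measure. So $\nu=\rho\rmd z$ with $0\le\rho\le1$, and $\rho\in L^1$ because $\nu$ is finite.
\item Weak convergence of finite measures preserves total mass (test against $1$), so $\mu(\R^d)=\lim n_R/R^{d+1}=t$.
\item For absolute continuity of $\mu$, Lemma~\ref{cor:local-time-second-moment} and Fatou give $\E\liminf_R\int u_R^2\le C$, so $\liminf_R\norm{u_R}_{L^2}<\infty$ almost surely. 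Along a further (random) subsequence, $u_R$ is bounded in $L^2$ and converges weakly in $L^2$ to some $u$. Comparing with weak convergence of measures on $C_c$ test functions gives $\mu=u\rmd z$ with $u\ge0$ and $u\in L^2$.
\end{itemize}

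\emph{The limiting equation.} Proposition~\ref{prop:quantitative-equation} gives $\mathcal E_R(\varphi)\to0$ in probability for each $\varphi$. Pick a countable family $\{\varphi_k\}\subset C_c^\infty$ that is dense in the sense of $C^2$ convergence with supports in a common compact set. By a diagonal argument, after passing to a further subsequence, $\mathcal E_R(\varphi_k)\to0$ almost surely for all $k$.
\begin{itemize}
\item The term $\int u_R\Delta\varphi\to\int u\Delta\varphi$ by weak convergence.
\item The integrand $\nabla\norm{z}_1\cdot\nabla\varphi$ is bounded and continuous off the coordinate hyperplanes. These hyperplanes are $\nu$-null because $\nu\ll$ Lebesgue, so the portmanteau theorem gives convergence of $\int_{D_R}\nabla\norm{z}_1\cdot\nabla\varphi$ to $\int\rho\,\nabla\norm z_1\cdot\nabla\varphi$.
\end{itemize}
Density of $\{\varphi_k\}$ then extends \eqref{eq:relaxed-weak-equation} to all $\varphi$. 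The conclusions concern the limit law only, so these statements transfer back to the original subsequential limit.

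\emph{$H^1$ regularity.} This is the main point. Set $F:=2\theta\rho\nabla\norm{z}_1\in L^2(\R^d;\R^d)$, which holds since $\rho\in L^1\cap L^\infty$. Equation \eqref{eq:relaxed-weak-equation} says $\Delta u=\operatorname{div}F$ in distributions, and $u,F\in L^2$ are tempered. Taking Fourier transforms gives $\abs\xi^2\hat u(\xi)=\xi\cdot\hat F(\xi)$ as tempered distributions. Both sides are locally integrable functions, so this holds a.e. Hence $\abs{\xi}\abs{\hat u}\le\abs{\hat F}\in L^2$, so $\xi\hat u\in L^2$ and $\nabla u\in L^2$, i.e. $u\in H^1(\R^d)$. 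The main obstacle I expect is the bookkeeping of the almost sure statements through the Skorokhod and diagonal subsequence extractions. In particular, the random further subsequence used to identify $u$ must be compatible with the fixed subsequential limit, which holds because $\mu$ is already determined.
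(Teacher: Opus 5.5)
Your proposal is correct. It follows the paper's skeleton: tail bounds from \eqref{eq:tail} and \eqref{eq:occwt} for tightness, Skorokhod representation, domination of $\nu$ by Lebesgue measure, conservation of total mass, and passage to the limit in $\mathcal E_R(\varphi)$. Several technical steps are done differently.

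\emph{Absolute continuity of $\mu$.} The paper adds $\norm{u_R}_2^2$ as a third coordinate before invoking Skorokhod, then uses $\abs{\int f\,\rmd\mu}^2\le Y\norm{f}_2^2$ and the Riesz representation. You use Fatou plus an $\omega$-dependent weak-$L^2$ subsequence. To run Fatou on the Skorokhod space, set $\Psi(\mu):=\sup\{\abs{\int f\,\rmd\mu}^2: f\in C_c(\R^d),\ \norm{f}_2\le1\}$. It equals the squared $L^2$ norm of the density when $\mu$ has an $L^2$ density, and $+\infty$ otherwise. As a supremum of weakly continuous functionals, it is lower semicontinuous and Borel. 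So the Skorokhod copies still satisfy $\E\Psi(\mu_R)\le C$, and lower semicontinuity with Fatou gives $\E\Psi(\mu)\le C$ directly, with no random subsequence needed.

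\emph{Passing to the limit.} Your portmanteau argument for the $\nu$-term (bounded integrand whose discontinuities lie on the $\nu$-null coordinate hyperplanes) replaces the paper's explicit clipping of $\operatorname{sgn}(z_i)$. Your diagonal, almost surely convergent subsequence replaces the paper's identification of the almost sure limit of $\mathcal E_R(\varphi)$ with its limit in probability. Both substitutions are equivalent to the paper's steps.

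\emph{$H^1$ regularity.} This is the genuinely different step. Your Fourier argument is shorter than the paper's mollify-and-test-with-$u_\varepsilon$ energy estimate, and gives the same bound $\norm{\nabla u}_2\le\norm{F}_2=2\theta\sqrt d\,\norm{\rho}_2$. The paper's energy route has the advantage of being the same mechanism (testing the flux identity with the solution itself) that it reuses quantitatively in Propositions~\ref{thm:identification} and~\ref{prop:vol}.

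\emph{Two harmless slips.} With your $F$, \eqref{eq:relaxed-weak-equation} gives $\Delta u=-\operatorname{div}F$, not $+\operatorname{div}F$. The transformed identity is $\abs{\xi}^2\hat u=\mathrm i\,\xi\cdot\hat F$ up to sign conventions. Neither affects the modulus bound $\abs{\xi}\abs{\hat u}\le\abs{\hat F}$.
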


\begin{proof}
We define
\begin{align*}
 \mu_R(\rmd z):=u_R(z)\,\rmd z, \quad  \nu_R(\rmd z):=\1_{D_R}(z)\,\rmd z.
\end{align*}
By the definition of $u_R$ and Proposition~\ref{prop:range-moment}, we have
\begin{align*}
 \mu_R(\R^d)=\frac{n_R}{R^{d+1}}\leq t, \quad  \sup_{R\geq1}\E\nu_R(\R^d)<\infty.
\end{align*}
For all sufficiently large $L$, a rescaled cube $\frac{1}{R}(x+[0,1)^d)$ intersects the set $\{z\in\R^d :\norm{z}_1>L\}$ only if $\norm{x}_1>LR/2$.
Applying \eqref{eq:occwt} with
$g(x)=\1_{\{\norm{x}_1>LR/2\}}$ and using \eqref{eq:tail},
we obtain
\begin{align*}
 \E\left[  \mu_R\bigl(\{z:\norm{z}_1>L\}\bigr) +\nu_R\bigl(\{z:\norm{z}_1>L\}\bigr) \right]  &\leq (1+C_0)R^{-d}  \E\big[{\rm card}\{x\in\Rcal_{n_R}:\norm{x}_1>LR/2\}\big]\\
 &\leq C\exp\left(-\frac{\lambda L}{2d}\right),
\end{align*}
uniformly over $R\geq1$, where $C=C(d,\theta,t)<\infty$.
Consequently, we have
\begin{align*}
 \lim_{L\to\infty}\sup_{R\geq1} \E\left[ \mu_R\bigl(\{z:\norm{z}_1>L\}\bigr) +\nu_R\bigl(\{z:\norm{z}_1>L\}\bigr) \right]=0.
\end{align*}
By these bounds, Markov's inequality, and the compactness criterion for finite measures obtained from Prokhorov's theorem \cite[Theorem~5.1]{Billingsley}, we conclude that the laws of $(\mu_R,\nu_R)$, for $R\geq1$, are tight.

We fix a subsequence with $R\to\infty$ along which $(\mu_R,\nu_R)$ converges in distribution to $(\mu,\nu)$.
By Lemma~\ref{cor:local-time-second-moment}, the random variables $\norm{u_R}_2^2$ are tight.
After passing to a further subsequence, we obtain convergence in distribution of $(\mu_R,\nu_R,\norm{u_R}_2^2)$ to $(\mu,\nu,Y)$, where $Y$ is a finite nonnegative random variable. By the Skorokhod representation theorem~\cite[Theorem~6.7]{Billingsley}, we realize $(\mu_R,\nu_R,\norm{u_R}_2^2)$ and $(\mu,\nu,Y)$ on a common probability space $(\Omega',\mathcal F',\mathbb P')$, with their respective laws preserved, so that $(\mu_R,\nu_R,\norm{u_R}_2^2)\to(\mu,\nu,Y)$, $\mathbb P'$-almost surely.
We use the same notation for these random variables.

By the weak convergence of $\nu_R$, we have that, $\mathbb P'$-almost surely, for every nonnegative $f\in C_c(\R^d)$,
\begin{align*}
 \int_{\mathbb R^d}f\rmd \nu =\lim_{R\to\infty}\int_{\mathbb R^d}f\rmd\nu_R =\lim_{R\to\infty}\int_{\mathbb R^d}f(z)\mathbf1_{D_R}(z)\,\rmd z \leq\int_{\mathbb R^d}f(z)\,\rmd z.
\end{align*}
Thus $\nu$ is dominated by Lebesgue measure. We therefore conclude that $\nu(\rmd z)=\rho(z)\,\rmd z$ for a measurable function $\rho$ satisfying $0\leq\rho\leq1$ almost everywhere. Since $\nu$ is finite, we also have $\rho\in L^1(\R^d)$.

By the weak convergence of $\mu_R$ and the Cauchy--Schwarz inequality, we have that, $\mathbb P'$-almost surely, for every $f\in C_c(\R^d)$,
\begin{align*}
\left|\int_{\R^d}f\rmd\mu\right|^2  =\lim_{R\to\infty}  \left|\int_{\R^d}f\rmd\mu_R\right|^2 \leq  \lim_{R\to\infty}\norm{u_R}_2^2\norm f_2^2= Y\norm f_2^2.
\end{align*}
Since $C_c(\R^d)$ is dense in $L^2(\R^d)$, we extend the functional $f\mapsto\int_{\R^d}f(z)\,\mu(\rmd z)$ continuously to $L^2(\R^d)$.
By the Riesz representation theorem \cite[Theorem~5.5]{Brezis}, there exists $u\in L^2(\R^d)$ such that
\begin{align*}
 \int_{\R^d}f\rmd\mu  =\int_{\R^d}u(z)f(z)\,\rmd z  \quad\text{for every }f\in C_c(\R^d), \quad  \norm u_2^2\leq Y.
\end{align*}
We therefore have $\mu(\rmd z)=u(z)\,\rmd z$. Since $\mu$ is positive, we have $u\geq0$ almost everywhere. By the weak convergence, using the constant test function $1$, we obtain that, $\mathbb P'$-almost surely,
\begin{align*}
 \int_{\R^d}u(z)\,\rmd z  =\mu(\R^d)  =\lim_{R\to\infty}\mu_R(\R^d)  =\lim_{R\to\infty}\frac{n_R}{R^{d+1}} =t.
\end{align*}
In particular, we have $u\in L^1(\R^d)$.

We next fix $\varphi\in C_c^\infty(\R^d)$. By the weak convergence of $\mu_R$, we have that, $\mathbb P'$-almost surely,
\begin{align*}
 \int_{\R^d}\Delta\varphi\rmd\mu_R  \rightarrow  \int_{\R^d}u(z)\Delta\varphi(z)\,\rmd z.
\end{align*}
To treat the integral against $\nu_R$, we approximate each component $\operatorname{sgn}(z_i)$ of $\nabla\norm{z}_1$ by the continuous function
$\max\{-1,\min\{1,z_i/\varepsilon\}\}$. This continuous function agrees with $\operatorname{sgn}(z_i)$ when $\abs{z_i}\geq\varepsilon$, and their difference is bounded by $2$.
After multiplication by $\partial_i\varphi$, we obtain a continuous function with compact support, and we thus pass to the limit by the weak convergence of $\nu_R$.
Since $\nu_R$ and $\nu$ are dominated by Lebesgue measure, the approximation error for integration against either measure is bounded by
\begin{align*}
 2\sum_{i=1}^d\norm{\partial_i\varphi}_\infty  \int_{\operatorname{supp}\varphi\cap\{|z_i|<\varepsilon\}}\rmd z  \rightarrow0 \quad\text{as }\varepsilon\downarrow0.
\end{align*}
We therefore obtain that, $\mathbb P'$-almost surely,
\begin{align*}
 \int_{\R^d}\nabla\norm{z}_1  \mathbin{\cdot}\nabla\varphi(z)\,\nu_R(\rmd z)  \rightarrow \int_{\R^d}\rho(z)\nabla\norm{z}_1  \mathbin{\cdot}\nabla\varphi(z)\,\rmd z.
\end{align*}
Thus, $\mathbb P'$-almost surely, $\mathcal E_R(\varphi)$ converges to the left-hand side of \eqref{eq:relaxed-weak-equation}. Since the representation preserves the laws of the pairs $(\mu_R,\nu_R)$, we still have $\mathcal E_R(\varphi)\to0$ in probability by Proposition~\ref{prop:quantitative-equation}.
We conclude that \eqref{eq:relaxed-weak-equation} holds  $\mathbb P'$-almost surely for this test function.

We choose a countable family in $C_c^\infty(\R^d)$ that is dense for uniform convergence on $\R^d$ of the functions and their derivatives of order at most two. By taking the intersection of the corresponding events of probability one, we obtain \eqref{eq:relaxed-weak-equation} simultaneously for this family. 
For $\varphi,\psi\in C_c^\infty(\R^d)$, we have
\begin{align*}
 \left|\int_{\R^d}u(z)\Delta(\varphi-\psi)(z)\,\rmd z\right| &\leq\norm u_1\norm{\Delta(\varphi-\psi)}_\infty,\\
\left|\int_{\R^d}\rho(z)\nabla\norm{z}_1\mathbin{\cdot}\nabla(\varphi-\psi)(z)\,\rmd z\right| &\leq\norm\rho_1\sum_{i=1}^d\norm{\partial_i(\varphi-\psi)}_\infty.
\end{align*}
Since $u,\rho\in L^1(\R^d)$, these bounds allow us to extend the equation by uniform approximation of the first and second derivatives. Thus, $\mathbb P'$-almost surely, \eqref{eq:relaxed-weak-equation} holds for every $\varphi\in C_c^\infty(\R^d)$.

It remains to prove that $u\in H^1(\R^d)$, $\mathbb P'$-almost surely. Since $0\leq\rho\leq1$ and $\rho\in L^1(\R^d)$, we have
\begin{align*}
 \norm{\rho\nabla\norm{\cdot}_1}_2^2=d\norm\rho_2^2\leq d\norm\rho_1<\infty.
\end{align*}
By \eqref{eq:relaxed-weak-equation}, we have
\begin{align*}
 \Delta u(z)=-2\theta\,\operatorname{div}\bigl(\rho\nabla\norm{z}_1\bigr)
\end{align*}
in the sense of distributions. We fix a nonnegative function $\eta\in C_c^\infty(\R^d)$ with $\int_{\R^d}\eta(z)\,\rmd z=1$ and $\operatorname{supp}\eta\subset\mathcal{B}_1$. For $\varepsilon>0$, we set
\begin{align*}
 \eta_\varepsilon(z):=\varepsilon^{-d}\eta(z/\varepsilon), \quad u_\varepsilon:=u*\eta_\varepsilon.
\end{align*}
By Young's inequality and approximation by mollifiers \cite[Theorems~4.15 and~4.22]{Brezis}, we have
\begin{align*}
 u_\varepsilon\in H^1(\R^d), \quad  u_\varepsilon\rightarrow u\text{ in }L^2(\R^d), \quad \norm{\bigl(\rho\nabla\norm{\cdot}_1\bigr)*\eta_\varepsilon}_2\leq\sqrt d\,\norm\rho_2.
\end{align*}
Convolving the distributional equation with $\eta_\varepsilon$, we obtain
\begin{align*}
 \Delta u_\varepsilon=-2\theta\,\operatorname{div}\bigl((\rho\nabla\norm{\cdot}_1)*\eta_\varepsilon\bigr).
\end{align*}
By integration by parts, for every $\psi\in C_c^\infty(\R^d)$, we have
\begin{align*}
 \int_{\R^d}\nabla u_\varepsilon(z)\mathbin{\cdot}\nabla\psi(z)\,\rmd z =-2\theta\int_{\R^d}\bigl((\rho\nabla\norm{\cdot}_1)*\eta_\varepsilon\bigr)(z)\mathbin{\cdot}\nabla\psi(z)\,\rmd z.
\end{align*}
Both sides are continuous in $\psi$ with respect to the $H^1$ norm, since $\nabla u_\varepsilon$ and $(\rho\nabla\norm{\cdot}_1)*\eta_\varepsilon$ belong to $L^2(\R^d;\R^d)$. Since $C_c^\infty(\R^d)$ is dense in $H^1(\R^d)$~\cite[Theorem~9.2]{Brezis}, the identity holds for every $\psi\in H^1(\R^d)$. Taking $\psi=u_\varepsilon$, we obtain
\begin{align*}
 \norm{\nabla u_\varepsilon}_2^2 &=-2\theta\int_{\R^d}\bigl((\rho\nabla\norm{\cdot}_1)*\eta_\varepsilon\bigr)(z)\mathbin{\cdot}\nabla u_\varepsilon(z)\,\rmd z\leq2\theta\sqrt d\,\norm\rho_2\norm{\nabla u_\varepsilon}_2.
\end{align*}
Consequently, we have $\norm{\nabla u_\varepsilon}_2\leq2\theta\sqrt d\,\norm\rho_2$. By weak compactness in $L^2(\R^d;\R^d)$ \cite[Theorem~3.18]{Brezis}, we extract a weakly convergent subsequence of the gradients as $\varepsilon\downarrow0$. Using $u_\varepsilon\to u$ in $L^2(\R^d)$ in the definition of weak derivatives, we identify this limit with the weak gradient of $u$. We conclude that $u\in H^1(\R^d)$ and, by weak lower semicontinuity of the norm, we have
\begin{align*}
 \norm{\nabla u}_2\leq2\theta\sqrt d\,\norm\rho_2.
\end{align*}
\end{proof}

\subsection{Strong local compactness of the rescaled local times}

We denote by $L^2_{\mathrm{loc}}(\R^d)$ the space of all locally square-integrable functions on $\mathbb{R}^d$.

\begin{proposition}\label{prop:strong-local-compactness}
The laws of $u_R$, for $R\geq1$, are tight in $L^2_{\mathrm{loc}}(\R^d)$.
If the joint laws of $(u_R(z)\,\rmd z,\1_{D_R}(z)\,\rmd z)$ converge along a subsequence to the law of $(u(z)\,\rmd z,\rho(z)\,\rmd z)$, then the joint laws of $(u_R,\1_{D_R}(z)\,\rmd z)$ converge along the same subsequence to the law of $(u,\rho(z)\,\rmd z)$ in the product of $L^2_{\mathrm{loc}}(\R^d)$ and the space of finite Borel measures on $\R^d$ equipped with weak convergence.
\end{proposition}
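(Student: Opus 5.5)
The plan is to prove tightness in $L^2_{\mathrm{loc}}(\R^d)$ through a frequency-space version of the Kolmogorov--Riesz (Rellich) criterion. Three facts are needed. First, the $L^2$ norm of $u_R$ is already bounded in expectation by Lemma~\ref{cor:local-time-second-moment}. Second, the mass of $u_R$ far from the origin is uniformly small by \eqref{eq:occwt} and \eqref{eq:tail}. Third, it remains to show that high frequencies carry little $L^2$ mass. Precisely, writing $\widehat u_R$ for the Fourier transform, the goal is
\begin{align*}
 \lim_{M\to\infty}\sup_{R\geq1}\E\int_{\abs{\xi}>M}\abs{\widehat u_R(\xi)}^2\,\rmd\xi=0 .
\end{align*}
Since $u_R$ is piecewise constant on cubes of side $1/R$, we have $\widehat u_R(\xi)=R^{-1-d}\,\widehat\ell_{n_R}(\xi/R)\,\prod_i\frac{1-e^{-i\xi_i/R}}{i\xi_i/R}$. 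The frequencies $\xi$ with $\abs{\xi}\gtrsim R$ (outside the fundamental torus) are harmless: the sinc factor is bounded, and Plancherel together with Lemma~\ref{cor:local-time-second-moment} controls them. The task is thus reduced to estimating $\int_{M/R<\abs{k}\leq \pi\sqrt d}\abs{\widehat\ell_{n_R}(k)}^2\,\rmd k$ on the torus, with target $o_M(1)\,R^{d+2}$.

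For this I take the Fourier transform of the Poisson equation \eqref{eq:poisson}:
\begin{align*}
 (1-\widehat p(k))\,\widehat\ell_n(k)=\widehat\varsigma_n(k)+\widehat a_n(k)+\widehat\vartheta_n(k),\qquad 1-\widehat p(k)\geq c\abs{k}^2 ,
\end{align*}
and bound the three terms separately.

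\emph{Drift term.} Since $q(x,\cdot)$ is supported on the neighbours of $x$ and has zero sum, $a_n$ is a discrete divergence $\sum_i\nabla_i^*F_i$ of a vector field $F$ that is supported on $\Rcal_{n-1}$ and bounded by a constant. Hence $\abs{\widehat a_n(k)}\leq C\abs{k}\,\abs{\widehat F(k)}$, and $\int\abs{\widehat F}^2\leq C\abs{\Rcal_{n-1}}$. Its contribution is therefore at most $C(R/M)^2\abs{\Rcal_{n_R}}$, which by Proposition~\ref{prop:range-moment} has expectation $\leq C M^{-2}R^{d+2}$.

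\emph{Martingale term.} Martingale orthogonality gives $\E\abs{\widehat\vartheta_n(k)}^2=\sum_j\E\operatorname{Var}(e^{ik\cdot X_{j+1}}\mid\F_j)\leq Cn\abs{k}^2$. Its contribution is then at most $Cn\int_{M/R}^{\pi\sqrt d}r^{d-3}\,\rmd r$, which is $O(R^{d+1}\log R)$ for $d=2$ and $O(R^{d+1})$ for $d\geq3$. Both are $o(R^{d+2})$.

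\emph{Boundary term.} We have $\abs{\widehat\varsigma_n}\leq2$, so its contribution is at most $C\int_{M/R}r^{d-5}\,\rmd r\leq C(R/M)^{4-d}+C$. This is also $o(R^{d+2})$.

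Summing the three bounds and dividing by $R^{d+2}$ gives the desired high-frequency estimate, uniformly in $R$, with an error of order $M^{-2}+o_R(1)$. Combining it with the $L^2$ and tail bounds through Markov's inequality, I obtain, for every $\varepsilon>0$, a set that has compact closure in $L^2_{\mathrm{loc}}$ (by the Fourier form of Kolmogorov--Riesz) and contains $u_R$ with probability at least $1-\varepsilon$. This gives the tightness claim.

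For the joint statement, tightness of each marginal gives tightness of the pairs $(u_R,\1_{D_R}\rmd z)$ in the product space. Let $(v,\nu)$ be any subsequential limit along the given subsequence. Convergence in $L^2_{\mathrm{loc}}$ implies $\int u_Rf\to\int vf$ for every $f\in C_c(\R^d)$. Using Skorokhod's representation as in Proposition~\ref{thm:weak-limits}, the law of $(v(z)\,\rmd z,\nu)$ is then the vague limit of the laws of $(u_R\,\rmd z,\1_{D_R}\rmd z)$. By hypothesis these laws converge to the law of $(u\,\rmd z,\rho\,\rmd z)$, and vague limits of finite measures with uniformly controlled tails coincide with the weak limits. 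Hence the law of $(v,\nu)$ is that of $(u,\rho\,\rmd z)$, where we identify $u$ with the density of its measure; this density is uniquely determined as an element of $L^2_{\mathrm{loc}}$. Since every subsequential limit is the same, the whole subsequence converges.

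I expect the main obstacle to be the martingale term in $d=2$, where the bound is only logarithmically smaller than needed. I would check carefully the lower bound $1-\widehat p(k)\geq c\abs{k}^2$ on the whole torus, including near the corners $k_i=\pm\pi$, and the precise decay of the conditional variance. If the logarithm causes trouble, I would fall back on the $H^{s}$ bound for some $s<1$, which still suffices for compactness.
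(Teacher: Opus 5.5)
Your proposal is correct, and it reaches tightness by a genuinely different route from the paper.

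\emph{The paper's route.} It passes to the torus $\mathbb T_N$ with $N=4n_R+1$, solves $(I-P_N)v_{n_R}=a_{n_R}$, and splits $\ell_{n_R}=v_{n_R}+n_R/N^d+(\text{remainder})$.
\begin{itemize}
\item The Dirichlet-form bound $\langle v_n,(I-P_N)v_n\rangle\le C\abs{\Rcal_n}$ gives an $H^1$ bound for the piecewise-affine interpolant $\widetilde v_R$.
\item The Fourier estimates for $\widehat\vartheta_n,\widehat\varsigma_n$, which are the ones you use, make the remainder negligible in rescaled $L^2$ over all nonzero frequencies.
\item Tightness then follows from Rellich--Kondrachov applied to $\widetilde v_R$, together with $u_R-\widetilde v_R\to0$.
\end{itemize}

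\emph{Your route.} You stay on $\Z^d$, never solve for $v_n$, and estimate only the frequencies $\abs k>M/R$. Your divergence bound $\abs{\widehat a_n(k)}\le C\abs k\,\abs{\widehat F(k)}$ is the Fourier form of the paper's energy bound. Lemma~\ref{cor:local-time-second-moment} handles the low frequencies.

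The cutoff at $\abs k>M/R$ is what lets you avoid two infrared divergences: that of $\int\abs k^{-2}\,\rmd k$ in $d=2$ for the martingale term, and that of the crude bound $\abs{\widehat\varsigma_n}\le2$ in $d\le4$. These divergences are part of why the paper works on a finite torus, with its zero mode and the constant $n/N^d$.

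\emph{What each buys.} Your route is shorter for this proposition. The paper's route produces an explicit $H^1$ approximant of $u_R$ with mean-square $L^2$ error $O((\log R)/R)$, and that approximant is reused in Propositions~\ref{prop:trans} and~\ref{prop:vol}. Your joint-convergence argument is essentially the paper's.

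\emph{Points to tighten.}
\begin{itemize}
\item Bounded sinc factors plus Plancherel give only an $O(1)$ bound on the frequencies outside $[-\pi,\pi]^d$, not smallness. Instead, set $S(k):=\prod_i\sin^2(k_i/2)/(k_i/2)^2$. On $[-\pi,\pi]^d$ it satisfies $\sum_{m\neq0}S(k+2\pi m)=1-S(k)\le C\min(1,\abs k^2)$. So for any $\delta>0$ these frequencies contribute at most $C\delta^2\norm{u_R}_2^2$ plus your in-torus estimate on $\{\abs k>\delta\}$, and both terms are small.
\item Your $o_R(1)$ error gives $\lim_M\limsup_R$ rather than $\lim_M\sup_{R\ge1}$. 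This still suffices for tightness along sequences $R\to\infty$, which is how the paper argues as well.
\item The spatial tail bound is not needed for compactness in $L^2_{\mathrm{loc}}$.
\item Your concern about $d=2$ is unfounded: your own computation already gives $O((\log R)/R)$ after dividing by $R^{d+2}$.
\end{itemize}
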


\begin{proof}
It is enough to consider $R$ sufficiently large that $n_R\geq1$. Fix $n\geq1$. Set $N=4n+1$ and represent the torus $\mathbb{T}_N=(\Z/N\Z)^d$ by $\{-2n,\ldots,2n\}^d$. Let $P_N$ be the transition operator of the simple random walk on the torus $\mathbb{T}_N$. We use counting measure for norms and inner products on the torus, and evaluate torus functions through their periodic extensions to $\Z^d$. Recall that $a_n$, $\vartheta_n$, $\varsigma_n$ are defined by \eqref{eq:poisson-terms}. The functions $\ell_n$, $a_n$, $\vartheta_n$, $\varsigma_n$ are supported in $\{-n,\ldots,n\}^d$, and thus \eqref{eq:poisson} also holds on the torus. Since $a_n$ has sum zero, there is a unique mean-zero solution $v_n$ of
\begin{align}\label{def:Poison-N}
 (I-P_N)v_n=a_n.
\end{align}
Using \eqref{eq:poisson} and $\sum_{x\in\mathbb T_N}\ell_n(x)=n$, we have
\begin{align}
 \begin{aligned}
 (I-P_N)\left(\ell_n-v_n-\frac{n}{N^d}\right)&=\vartheta_n+\varsigma_n,\quad  \sum_{x\in\mathbb T_N}\left(\ell_n(x)-v_n(x)-\frac{n}{N^d}\right)&=0.
 \end{aligned}
 \label{eq:local-time-decomposition}
\end{align}
By \eqref{eq:poisson-terms} and $\sum_yq(x,y)=0$, we obtain
\begin{align*}
 \langle v_n,(I-P_N)v_n\rangle
 =\langle a_n,v_n\rangle
 &=\sum_{x\in\Rcal_{n-1}\setminus\{0\}}\sum_{y:y\sim x}q(x,y)\bigl(v_n(y)-v_n(x)\bigr)\\
 &\leq C\abs{\Rcal_n}^{1/2}\langle v_n,(I-P_N)v_n\rangle^{1/2}.
\end{align*}
Here we applied the Cauchy--Schwarz inequality, using $\sum_y\abs{q(x,y)}\leq2$ and the Dirichlet form
\begin{align*}
 \langle v_n,(I-P_N)v_n\rangle
 =\frac1{4d}\sum_{x\in\mathbb T_N}\sum_{y:y\sim x}\abs{v_n(y)-v_n(x)}^2.
\end{align*}
We therefore obtain
\begin{align}
 \langle v_n,(I-P_N)v_n\rangle\leq C\abs{\Rcal_n}.
 \label{eq:cookie-potential-energy}
\end{align}

We next estimate $\ell_n-v_n-n/N^d$ by Fourier transform. For $\xi\in(2\pi/N)\{-2n,\ldots,2n\}^d$, define
\begin{align*}
 \widehat s(\xi):=\sum_{x\in\mathbb T_N}s(x)e^{-\mathrm i\xi\cdot x},
 \quad
 \lambda(\xi):=1-\frac1d\sum_{i=1}^d\cos\xi_i.
\end{align*}
Then $\widehat{(I-P_N)s}(\xi)=\lambda(\xi)\widehat s(\xi)$. By \eqref{eq:poisson-terms} and martingale orthogonality, we obtain
\begin{align*}
 \E\abs{\widehat\vartheta_n(\xi)}^2
 &=\sum_{j=0}^{n-1}\E\abs{e^{-\mathrm i\xi\cdot X_{j+1}}-\E[e^{-\mathrm i\xi\cdot X_{j+1}}\mid\F_j]}^2
 \leq\sum_{j=0}^{n-1}\E\abs{e^{-\mathrm i\xi\cdot X_{j+1}}-e^{-\mathrm i\xi\cdot X_j}}^2
 \leq2dn\lambda(\xi).
\end{align*}
Here we used the fact that conditional expectation minimizes the mean squared error, together with $\abs{e^{\mathrm i\xi_i}-1}^2=2(1-\cos\xi_i)$. By telescoping along a shortest nearest-neighbor path from $0$ to $X_n$ and applying the Cauchy--Schwarz inequality, we also have
$\abs{\widehat\varsigma_n(\xi)}^2  =\abs{1-e^{-\mathrm i\xi\cdot X_n}}^2 \leq2d\norm{X_n}_1^2\lambda(\xi)$.
By \eqref{eq:quadratic-identity}, we have $\E\norm{X_n}_1^2\leq d\E\norm{X_n}_2^2\leq dn$. Consequently,
\begin{align*}
\E\abs{\widehat\vartheta_n(\xi)+\widehat\varsigma_n(\xi)}^2
&\leq2\E\abs{\widehat\vartheta_n(\xi)}^2+2\E\abs{\widehat\varsigma_n(\xi)}^2\leq Cn\lambda(\xi).
\end{align*}
Since $\lambda(\xi)\geq c\norm{\xi}_2^2$ on $[-\pi,\pi]^d$, applying \eqref{eq:norm.inq} to the integer vectors $N\xi/(2\pi)$, we obtain
\begin{align*}
 \frac1{N^d}\sum_{\xi\neq0}\frac1{\lambda(\xi)}
 \leq CN^{2-d}\sum_{r=1}^{\lceil CN\rceil}r^{d-3}
 \leq C(1+\log N).
\end{align*}
By \eqref{eq:local-time-decomposition}, for $\xi\neq0$, we have $\widehat{\ell_n-v_n-n/N^d}(\xi)
 =\frac{\widehat\vartheta_n(\xi)+\widehat\varsigma_n(\xi)}{\lambda(\xi)}$. Since $\ell_n-v_n-n/N^d$ has sum zero, using Parseval's identity, we obtain
\begin{align}
 \begin{aligned}
 \E\norm{\ell_n-v_n-\frac{n}{N^d}}_2^2
 &=\frac1{N^d}\sum_{\xi\neq0} \frac{\E\abs{\widehat\vartheta_n(\xi)+\widehat\varsigma_n(\xi)}^2}{\lambda(\xi)^2}\leq\frac{Cn}{N^d}\sum_{\xi\neq0}\frac1{\lambda(\xi)}  \leq Cn(1+\log N).
 \end{aligned}
 \label{eq:poisson-error-l2}
\end{align}

We now apply these estimates at time $n_R$, with $N=4n_R+1$. Define
\begin{align}\label{def:QR}
 Q_R:=\left[-\frac{2n_R}{R},\frac{2n_R+1}{R}\right)^d,
 \quad v_R^{\square}(z):=\frac{v_{n_R}(x)}R \quad\text{for }z\in\frac{x+[0,1)^d}{R},
\end{align}
where $x\in\{-2n_R,\ldots,2n_R\}^d$. 
We triangulate each unit cube $x+[0,1]^d$ into the $d!$ simplices with vertices $x_0:=x$ and $x_k:=x+\sum_{j=1}^k e_{\sigma(j)}$, $1\leq k\leq d$, where $\sigma$ ranges over all permutations of $\{1,\ldots,d\}$. On each rescaled simplex, we define
\begin{align}\label{def:vtilde}
 \widetilde v_R\left(\frac1R\sum_{k=0}^d\lambda_kx_k\right)
 :=\frac1R\sum_{k=0}^d\lambda_kv_{n_R}(x_k)\quad \text{for } \lambda_k\geq0,\quad \sum_{k=0}^d\lambda_k=1.
\end{align}
Hence $\widetilde v_R$ is continuous, affine on each rescaled simplex, and equal to $v_{n_R}(x)/R$ at each vertex $x/R$. We extend both functions with period $N/R$ in each coordinate.

On each simplex, the components of $\nabla\widetilde v_R$ are differences of $v_{n_R}$ across lattice edges, and each edge belongs to at most $C(d)$ simplices. Since each simplex in $R^{-1}(x+[0,1]^d)$ contains $x/R$ and has diameter at most $\sqrt d/R$, we obtain
\begin{align}
 \int_{Q_R}\abs{\nabla\widetilde v_R}^2\,\rmd z
 &\leq CR^{-d}\langle v_{n_R},(I-P_N)v_{n_R}\rangle,
 \label{eq:interpolated-gradient}\\
 \int_{Q_R}\abs{\widetilde v_R-v_R^\square}^2\,\rmd z
 &\leq\frac d{R^2}\int_{Q_R}\abs{\nabla\widetilde v_R}^2\,\rmd z
 \leq CR^{-d-2}\langle v_{n_R},(I-P_N)v_{n_R}\rangle.
 \label{eq:interpolation-error}
\end{align}
By Proposition~\ref{prop:range-moment}, \eqref{eq:cookie-potential-energy}, and \eqref{eq:interpolated-gradient}, we obtain
\begin{align}
 \limsup_{R\to\infty}\E\int_{Q_R}\abs{\nabla\widetilde v_R}^2\,\rmd z<\infty.
 \label{eq:interpolated-gradient-bound}
\end{align}
Using \eqref{eq:rescaled-local-time}, \eqref{eq:local-time-decomposition}, \eqref{eq:poisson-error-l2}, \eqref{eq:interpolation-error}, and \eqref{eq:interpolated-gradient-bound}, we obtain
\begin{align}
 \E\int_{Q_R}\abs{u_R-\widetilde v_R}^2 \,\rmd z
 &\leq C\left(
 \frac{n_R(1+\log N)}{R^{d+2}}
 +R^{-2}
 +\frac{n_R^2}{N^dR^{d+2}}
 \right)\leq C\frac{1+\log R}{R}\rightarrow0.
 \label{eq:strong-error}
\end{align}
Here we used $n_R\leq tR^{d+1}$ and $n_R^2/N^d\leq1$, which holds for $d\geq2$ and $n_R\geq1$.

For every fixed Euclidean ball $B$, we have $B\subset Q_R$ for all sufficiently large $R$. By Lemma~\ref{cor:local-time-second-moment}, \eqref{eq:interpolated-gradient-bound} and \eqref{eq:strong-error}, we obtain
\begin{align*}
 \limsup_{R\to\infty}\E\int_B
 \bigl(\abs{\widetilde v_R}^2+\abs{\nabla\widetilde v_R}^2\bigr)\,\rmd z<\infty.
\end{align*}
Applying Markov's inequality and the Rellich--Kondrachov theorem~\cite[Theorem~9.16]{Brezis} on balls centered at the origin with integer radii, we obtain the tightness of the laws of $\widetilde v_R$ in $L^2_{\mathrm{loc}}(\R^d)$ along any sequence $R\to\infty$. By \eqref{eq:strong-error}, we have $u_R-\widetilde v_R\to0$ in probability in $L^2_{\mathrm{loc}}(\R^d)$. Applying \cite[Theorem~3.1]{Billingsley} along convergent subsequences and using Prokhorov's theorem~\cite[Theorems~5.1 and~5.2]{Billingsley}, we conclude that the laws of $u_R$ are tight.

For the second assertion, the tightness of $u_R$ and the assumed convergence of the measure pairs imply joint tightness of $(u_R,u_R(z)\,\rmd z,\1_{D_R}(z)\,\rmd z)$ along the specified subsequence. Let $(v,\mu,\nu)$ be any joint subsequential limit. For every $\varphi\in C_c^\infty(\R^d)$, continuity of the corresponding integrals implies
\begin{align*}
 \int_{\R^d}\varphi(z)v(z)\,\rmd z
 =\int_{\R^d}\varphi(z)\,\mu(\rmd z)
 \quad\text{almost surely}.
\end{align*}
Using a countable collection of smooth compactly supported test functions that determines locally finite measures, we obtain $\mu=v(z)\,\rmd z$ almost surely. By the assumed convergence, $(\mu,\nu)$ has the law of $(u(z)\,\rmd z,\rho(z)\,\rmd z)$. The maps $v\mapsto\int_{\R^d} \varphi(z) v(z) \rmd z$, with $\varphi\in C_c^\infty(\R^d)$, generate the Borel sigma-field of $L^2_{\mathrm{loc}}(\R^d)$, and thus the law of $(\mu,\nu)$ determines the joint law of its density and $\nu$. Thus $(v,\nu)$ has the law of $(u,\rho(z)\,\rmd z)$. Since every joint subsequential limit has this law, we obtain convergence along the specified subsequence.

\end{proof}

\subsection{Identification of the limiting densities}

Recall that
\begin{align*}
 \mathcal{B}_1:=\{z\in\R^d:\norm{z}_1<1\},\quad
 \alpha_t:=\frac12\left((d+1)!\frac{t}{\theta}\right)^{1/(d+1)} \text{for }t>0.
\end{align*}
We use Proposition~\ref{prop:strong-local-compactness} to identify
the functions $u$ and $\rho$ in Proposition~\ref{thm:weak-limits}.

\begin{proposition}[Identification of every weak limit]\label{thm:identification}
For every subsequential limit in Proposition~\ref{thm:weak-limits},
we have, almost surely,
\begin{align}
 \rho(z)=\1_{\alpha_t\mathcal{B}_1}(z),
 \quad
 u(z)=2\theta\bigl(\alpha_t-\norm{z}_1\bigr)_+
 \quad\text{for almost every }z\in\R^d.
 \label{eq:identified-limit}
\end{align}
\end{proposition}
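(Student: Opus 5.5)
The plan is to derive, from Propositions~\ref{thm:weak-limits} and~\ref{prop:strong-local-compactness}, enough extra relations between $u$ and $\rho$ to pin them down. Then I would show that the explicit pair \eqref{eq:identified-limit} is the only pair satisfying all of them. I work on the Skorokhod space of the proof of Proposition~\ref{thm:weak-limits}, now with $u_R\to u$ in $L^2_{\mathrm{loc}}$ jointly with $\1_{D_R}\rmd z\to\rho\,\rmd z$, as allowed by Proposition~\ref{prop:strong-local-compactness}.

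\emph{Step 1: $\rho\ge\1_{\{u>0\}}$.} On every cube of $D_R^c$ we have $u_R=0$, so $u_R\1_{D_R^c}=0$. Fix $\delta>0$ and a ball $B$, and let $g\in C_c(B)$ with $0\le g\le1$. Since $u_R\to u$ strongly in $L^2(B)$, $\int g\,\1_{D_R^c}\min(u_R,\delta)\,\rmd z=0$ passes to the limit against the weakly convergent $\1_{D_R^c}\rmd z\to(1-\rho)\rmd z$. This is a strong--weak product, so it converges. Hence $\int g(1-\rho)\min(u,\delta)=0$, and therefore $\rho=1$ a.e.\ on $\{u>0\}$.

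\emph{Step 2: global constraints.} From Lemma~\ref{lem:rad}, divided by $R^{d+1}$, together with the tail bound \eqref{eq:tail} (which gives uniform integrability of $\norm{z}_1$ against $\nu_R$), I get
\[
\int\rho\norm{z}_1\,\rmd z=\int_{\alpha_t\mathcal B_1}\norm z_1\,\rmd z=\frac{dt}{2\theta}.
\]
Proposition~\ref{thm:weak-limits} also gives $\int u=t$. Next I would test \eqref{eq:relaxed-weak-equation} with smooth truncations of $\varphi(z)=\norm z_1^2/2$, after mollifying the kinks on the coordinate hyperplanes. This is legitimate since $u\in H^1$ and both $u$ and $\rho$ have exponential tails, so the weak form can be rewritten as $\frac1{2d}\int\nabla u\cdot\nabla\varphi+\dots$ and the hyperplane singularities disappear. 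Because $\nabla\norm z_1\cdot\nabla\varphi=d\norm z_1$, this yields
\[
-\frac1{2d}\int\nabla u\cdot\nabla\varphi=\theta\int\rho\norm z_1=\frac{dt}{2}.
\]
Similarly, testing with functions of $\norm z_1$ alone gives a one-dimensional identity. For $r>0$ set $m(r):=\int_{\{\norm z_1>r\}}\nabla u\cdot\nabla\norm z_1\,\rmd z$. Then
\[
m(r)=-2\theta\int_{\{\norm z_1>r\}}\rho\,\abs{\nabla\norm z_1}^2\,\rmd z\le0,
\]
a flux identity across the $\ell^1$ spheres.

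\emph{Step 3: uniqueness (main obstacle).} Let $U:=2\theta(\alpha_t-\norm z_1)_+$, which solves \eqref{eq:relaxed-weak-equation} with $\rho=\1_{\alpha_t\mathcal B_1}$. I would first prove the reverse inclusion $\rho\le\1_{\{u>0\}}$, i.e.\ that $\rho$ vanishes a.e.\ where $u=0$. On the open set where $u=0$ near a point, the equation forces $\operatorname{div}(\rho\nabla\norm z_1)=0$; since $\nabla\norm z_1$ is a constant unit-coordinate vector field in each orthant, $\rho$ is then constant along the rays directed by $\nabla\norm z_1$ in that region. This is incompatible with the integrability of $\rho$ unless $\rho=0$, except possibly near $\partial\{u>0\}$. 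The delicate case, where $\{u=0\}$ has empty interior but positive measure, would be handled using $u\in H^1$, which gives $\nabla u=0$ a.e.\ on $\{u=0\}$. With $\rho=\1_{\{u>0\}}$ in hand, I compare $u$ with $U$ via the energy identity for $w=u-U$, tested against $w$ itself (admissible by density, as in the proof of Proposition~\ref{thm:weak-limits}):
\[
\int\abs{\nabla w}^2=-2\theta\int\bigl(\1_{\{u>0\}}-\1_{\{U>0\}}\bigr)\nabla\norm z_1\cdot\nabla w.
\]
I would then use the monotone flux identity of Step 2 and the equalities $\int u=\int U$ and $\int\rho\norm z_1=\int\1_{\alpha_t\mathcal B_1}\norm z_1$ to show that the right-hand side is $\le0$. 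The intuition is the bathtub principle: $\rho\le1$ with prescribed first $\norm{\cdot}_1$-moment and the radial monotonicity force $\rho$ to equal the indicator of the $\ell^1$ ball. This gives $w=0$, hence $u=U$ and $\rho=\1_{\alpha_t\mathcal B_1}$. I expect Step 3, in particular controlling $\rho$ on $\{u=0\}$ and making the bathtub comparison rigorous, to be the main difficulty.
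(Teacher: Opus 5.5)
\paragraph{Verdict: Step~3 has a genuine gap in both of its parts.} Your Step~1 is correct and is exactly how the paper shows $\rho=1$ a.e.\ on $A:=\{u>0\}$. Your flux identity at $r=0$, $\int b\cdot\nabla u\,\rmd z=-2\theta d\int\rho\,\rmd z$ with $b=\nabla\norm z_1$, is one of the paper's two key identities. The moment identities from Lemma~\ref{lem:rad} and from $\norm z_1^2/2$ are not needed.

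\emph{The local argument for $\rho=0$ on $\{u=0\}$ does not work.} A priori $u$ is only an $H^1$ function, so $\{u=0\}$ may have empty interior. There, $\nabla u=0$ a.e.\ gives no local information on $\operatorname{div}(\rho b)$. Even on a bounded open component of $\{u=0\}$ enclosed by $A$, $\operatorname{div}(\rho b)=0$ only makes $\rho$ constant along bounded segments. That is compatible with integrability, and the interface condition involves $\nabla u$, not $\rho$ alone.

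\emph{The comparison with $U$ cannot close.} Assume your own premises: $\rho=\1_A$, $\nabla u=0$ a.e.\ off $A$, and the flux identities tested with $\norm z_1$ and with $(\alpha_t-\norm z_1)_+$. Then the right-hand side can be computed exactly:
\begin{align*}
\int\abs{\nabla(u-U)}^2\rmd z=-2\theta\int\bigl(\1_A-\1_{\alpha_t\mathcal B_1}\bigr)\,b\cdot\nabla(u-U)\,\rmd z=4\theta^2d\,\abs{A\mathbin{\triangle}\alpha_t\mathcal B_1}\geq0.
\end{align*}
So proving it is $\le0$ is equivalent to the conclusion itself. The bathtub principle with $\int\rho\norm z_1=\int_{\alpha_t\mathcal B_1}\norm z_1$ only gives $\abs A\le\abs{\alpha_t\mathcal B_1}$. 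The reverse inequality needs exact knowledge of $\nabla u$, which you do not have at that point.

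\paragraph{The missing idea.} Test $\int(\nabla u+2\theta\rho b)\cdot\nabla\varphi\,\rmd z=0$ with $\varphi=u$ itself. Since $\rho=1$ on $A$ and $\nabla u=0$ a.e.\ on $A^c$, this gives $\norm{\nabla u}_2^2=-2\theta\int_A b\cdot\nabla u$. Combine this with your flux identity, which is justified with cutoffs $\chi_M\norm z_1$ because $\nabla u\in L^1$ (this holds since $\abs A\le\int\rho<\infty$). Expanding the square then yields
\begin{align*}
\norm{\nabla u+2\theta\1_A b}_2^2=4\theta^2d\Bigl(\abs A-\int_{\R^d}\rho\,\rmd z\Bigr)\leq0,
\end{align*}
where the sign comes from $\rho\ge\1_A$. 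This gives $\rho=\1_A$ and $\nabla u=-2\theta\1_A b$ at once, with no analysis of $\{u=0\}$.

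The paper then finishes as follows. It takes the Lipschitz representative of $u$, so $A$ is open. It notes that $u+2\theta\norm z_1$ is constant on each component of $A$, and shows each component is some $c\mathcal B_1$, so there is only one. Finally, $\int u=t$ fixes $c=\alpha_t$.
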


\begin{proof}
We fix an arbitrary subsequential limit. By Proposition~\ref{prop:strong-local-compactness} and the Skorokhod representation theorem~\cite[Theorem~6.7]{Billingsley}, we realize the convergence on a common probability space so that, almost surely,
\begin{align*}
 u_R\rightarrow u
 \quad\text{in }L^2_{\mathrm{loc}}(\R^d),
 \quad
 \1_{D_R}(z)\,\rmd z\rightarrow\rho(z)\,\rmd z
 \quad\text{weakly as finite measures}.
\end{align*}
We fix an outcome for which these convergences and all the conclusions of Proposition~\ref{thm:weak-limits} hold.

We set $A:=\{z\in \R^d:u(z)>0\}$.
Since $u_R=0$ outside $D_R$, for every bounded Borel set $B$ we have
\begin{align*}
 \int_B(1-\1_{D_R})u^2\,\rmd z =\int_B(1-\1_{D_R})\abs{u-u_R}^2\,\rmd z \leq\norm{u-u_R}_{L^2(B)}^2\longrightarrow0.
\end{align*}
Using $0\leq\1_{D_R},\rho\leq1$ and the density of $C_c(\R^d)$ in $L^1(\R^d)$~\cite[Theorem~4.3]{Brezis}, we extend convergence against continuous compactly supported functions to every $L^1$ test function.
In particular, we have $\int_B(1-\rho)u^2\,\rmd z=0$ for every bounded Borel set $B$.
We therefore obtain
\begin{align}
 \rho=1\quad\text{almost everywhere on }A.
 \label{eq:rho-one-positive-u}
\end{align}

We set $b(z):=\nabla\norm{z}_1$ almost everywhere. Then $\norm{b(z)}_2^2=d$ almost everywhere and $\rho b\in L^2(\R^d;\R^d)$. By integration by parts in \eqref{eq:relaxed-weak-equation} and the density of $C_c^\infty(\R^d)$ in $H^1(\R^d)$ \cite[Theorem~9.2]{Brezis}, we obtain
\begin{align}
 \int_{\R^d}(\nabla u+2\theta\rho b)\mathbin{\cdot}\nabla\varphi\,\rmd z=0  \quad\text{for every }\varphi\in H^1(\R^d).
 \label{eq:divergence-free-flux}
\end{align}
By \cite[Section~9.8, Comment~4]{Brezis}, we have $\nabla u=0$ almost everywhere on $A^c$. Taking $\varphi=u$ in \eqref{eq:divergence-free-flux} and using \eqref{eq:rho-one-positive-u}, we obtain
\begin{align}
 \norm{\nabla u}_2^2=-2\theta\int_A b\mathbin{\cdot}\nabla u\,\rmd z.
 \label{eq:energy-positive-set}
\end{align}
By \eqref{eq:rho-one-positive-u}, we have $\abs A\leq\int_{\R^d}\rho(z)\,\rmd z<\infty$. Consequently,
\begin{align*}
 \norm{\nabla u}_1 =\int_A\norm{\nabla u(z)}_2\,\rmd z  \leq\sqrt{\abs A}\,\norm{\nabla u}_2<\infty.
\end{align*}
For $M\geq1$, we choose $\chi_M\in C_c^\infty(\R^d)$ with $0\leq\chi_M\leq1$, $\chi_M=1$ on $\{\norm{z}_2\leq M\}$, $\chi_M=0$ on $\{\norm{z}_2\geq2M\}$, and $\norm{\nabla\chi_M}_\infty\leq C/M$, where $C$ is independent of $M$. The function $z\mapsto\chi_M(z)\norm{z}_1$ belongs to $H^1(\R^d)$. Taking this function as $\varphi$ in \eqref{eq:divergence-free-flux}, we obtain
\begin{align*}
 0 &=\int_{\R^d}\chi_M\bigl(b\mathbin{\cdot}\nabla u+2\theta d\rho\bigr)\,\rmd z+\int_{\R^d}\norm{z}_1(\nabla u+2\theta\rho b)\mathbin{\cdot}\nabla\chi_M\,\rmd z.
\end{align*}
We bound the second integral by
\begin{align*}
 \left|\int_{\R^d}\norm{z}_1(\nabla u+2\theta\rho b)\mathbin{\cdot}\nabla\chi_M\,\rmd z\right|
 \leq C\int_{\{\norm{z}_2>M\}}\bigl(\norm{\nabla u(z)}_2+\rho(z)\bigr)\,\rmd z
 \rightarrow0.
\end{align*}
Since $\nabla u,\rho\in L^1$, we pass to the limit in the first integral by dominated convergence. Using $\nabla u=0$ almost everywhere on $A^c$, we obtain
\begin{align}
 -\int_A b\mathbin{\cdot}\nabla u\,\rmd z
 =2\theta d\int_{\R^d}\rho(z)\,\rmd z.
 \label{eq:radial-flux-identity}
\end{align}
Combining \eqref{eq:energy-positive-set} and \eqref{eq:radial-flux-identity}, we have
\begin{align}
 \norm{\nabla u}_2^2=4\theta^2d\int_{\R^d}\rho(z)\,\rmd z.
 \label{eq:energy-range-mass}
\end{align}
By \eqref{eq:radial-flux-identity}, \eqref{eq:energy-range-mass}, and $\abs A\leq\int_{\R^d}\rho(z)\,\rmd z$, we obtain
\begin{align*}
 \norm{\nabla u+2\theta\1_A b}_2^2
 &=\norm{\nabla u}_2^2+4\theta\int_A b\mathbin{\cdot}\nabla u\,\rmd z+4\theta^2d\abs A=4\theta^2d\left(\abs A-\int_{\R^d}\rho(z)\,\rmd z\right)\leq0.
\end{align*}
Since this norm is nonnegative, we conclude that $\int_{\R^d}\rho(z)\,\rmd z=\abs A$ and $\nabla u=-2\theta\1_A b$ almost everywhere. Moreover, by \eqref{eq:rho-one-positive-u}, we have
\begin{align*}
 \int_{A^c}\rho(z)\,\rmd z=\int_{\R^d}\rho(z)\,\rmd z-\abs A=0.
\end{align*}
Thus
\begin{align}
 \rho=\1_A,
 \quad
 \nabla u=-2\theta\1_A\nabla\norm{z}_1
 \quad\text{almost everywhere}.
 \label{eq:zero-one-conclusion}
\end{align}

Every weak partial derivative of $u$ is now bounded in absolute
value by $2\theta$.
We convolve $u$ with nonnegative mollifiers; the same bounds hold
for the derivatives of the convolutions.
By convergence in $L^2$ and passage to a subsequence
\cite[Theorems~4.9,~4.22 and Lemma~9.1]{Brezis}, we obtain
\begin{align*}
 \abs{u(z)-u(w)}\leq2\theta\norm{z-w}_1
\end{align*}
for all $z,w$ outside a fixed set of Lebesgue measure zero.
We extend $u$ continuously from the complement of this null set to $\R^d$. We use this continuous extension for $u$ and its positivity set
for $A$. Since $u$ is continuous, we have that $A$ is open.
By Proposition~\ref{thm:weak-limits}, we have $\int_{\R^d}u(z)\,\rmd z=t>0$, and thus $A$ is nonempty.

By \eqref{eq:zero-one-conclusion}, we have
$\nabla u(z)+2\theta\nabla\norm{z}_1=0$ for almost every $z\in A$. Applying \cite[Section~9.1, Remark~7]{Brezis} on balls contained in $A$, we conclude that $z\mapsto u(z)+2\theta\norm{z}_1$ is constant on each connected component of $A$.
For any such component, we write its constant value as $2\theta c$.
On this component, we have
$u(z)=2\theta(c-\norm{z}_1)>0$.
Thus $c>0$ and the component is contained in $c\mathcal{B}_1$.

By continuity, we have $u=0$ on the boundary of the component:
any boundary point where $u>0$ would belong to the same connected
component of $A$.
The expression $u(z)=2\theta(c-\norm{z}_1)$ therefore implies
$\norm{z}_1=c$ at every boundary point.
Hence the component is both open and closed relative to
the connected set $c\mathcal{B}_1$, and we conclude that it equals $c\mathcal{B}_1$.
Every connected component of $A$ consequently contains the origin,
so $A$ has only one connected component.
By \eqref{eq:zero-one-conclusion}, we obtain, for some $c>0$,
\begin{align*}
 u(z)=2\theta(c-\norm{z}_1)_+,
 \quad
 \rho(z)=\1_{c\mathcal{B}_1}(z)
 \quad\text{for almost every }z\in\R^d.
\end{align*}
Finally, we have
\begin{align*}
 t=\int_{\R^d}u(z)\,\rmd z=2\theta\int_{c\mathcal{B}_1}(c-\norm{z}_1)\,\rmd z
 =\frac{2\theta \abs{\mathcal{B}_1}}{d+1}c^{d+1}.
\end{align*}
Note that $\abs{\mathcal{B}_1}=\frac{2^d}{d!}$. By the definition of $\alpha_t$, we conclude that $c=\alpha_t$,
and hence \eqref{eq:identified-limit} holds.
\end{proof}

\section{Proof of the shape theorem}
\subsection{Approximation of the limiting ball}

Let $\varpi_d:=\abs{\mathcal{B}_1}={2^d}/{d!}$.

\begin{proposition}\label{cor:shape-in-measure}
For every fixed $t>0$, as $R\to\infty$,
\begin{align}
 \abs{D_R\mathbin{\triangle}\alpha_t\mathcal{B}_1}
 &\rightarrow0
 \quad\text{in probability},
 \label{eq:range-symmetric-difference}\\
 R^{-d}\abs{\Rcal_{n_R}}
 &\rightarrow \varpi_d\alpha_t^d
 \quad\text{in probability},
 \label{eq:sharp-range-limit}\\
 \sup_{z\in \alpha_t\overline {\mathcal{B}}_1}\operatorname{dist}(z,D_R)
 &\rightarrow0
 \quad\text{in probability}.
 \label{eq:inner-hausdorff-main}
\end{align}
\end{proposition}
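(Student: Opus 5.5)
The plan is to derive all three statements from the identification in Proposition~\ref{thm:identification}, and to combine it with a uniqueness-of-limit argument and the lower bound on the radial sum in Lemma~\ref{lem:rad}. Since every subsequential limit of the laws of $(u_R\,\rmd z,\1_{D_R}\,\rmd z)$ is the deterministic pair $(2\theta(\alpha_t-\norm{z}_1)_+\rmd z,\1_{\alpha_t\mathcal B_1}\rmd z)$, tightness from Proposition~\ref{thm:weak-limits} gives convergence in probability of $\nu_R:=\1_{D_R}(z)\,\rmd z$ to $\1_{\alpha_t\mathcal B_1}(z)\,\rmd z$ weakly. Likewise, by Proposition~\ref{prop:strong-local-compactness}, $u_R\to u:=2\theta(\alpha_t-\norm{\cdot}_1)_+$ in probability in $L^2_{\mathrm{loc}}$.

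For \eqref{eq:range-symmetric-difference}, I would write $\abs{D_R\triangle\alpha_t\mathcal B_1}=\abs{D_R}+\abs{\alpha_t\mathcal B_1}-2\abs{D_R\cap\alpha_t\mathcal B_1}$. The inner part is handled directly from $L^2_{\mathrm{loc}}$ convergence. On $\{u>\delta\}=(\alpha_t-\delta/(2\theta))\mathcal B_1$, we have $u_R>0$ except on a set whose measure is at most $\delta^{-2}\norm{u_R-u}_{L^2}^2\to0$, and $u_R>0$ forces $z\in D_R$. Hence $\abs{\alpha_t\mathcal B_1\setminus D_R}\to0$ in probability after letting $\delta\downarrow0$. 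The outer part, $\abs{D_R\setminus\alpha_t\mathcal B_1}$, is the main obstacle, because weak convergence alone does not rule out mass escaping to infinity. The tail bound \eqref{eq:tail} only gives uniform smallness for $\norm{z}_1>L$ with $L$ large, not near $\alpha_t$.

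Here I would use Lemma~\ref{lem:rad}. Dividing by $R^{d+1}$ gives $\int_{D_R}\norm{z}_1\rmd z=\int_{\alpha_t\mathcal B_1}\norm{z}_1\rmd z+o_{\P}(1)$, since $(d+3)/2<d+1$ and the cube-discretization errors are $O(R^{-1})$. Now $\int_{D_R}\norm{z}_1=\int_{D_R\cap\alpha_t\mathcal B_1}\norm{z}_1+\int_{D_R\setminus\alpha_t\mathcal B_1}\norm{z}_1$. The first term tends to the full ball integral by the inner estimate. Therefore $\int_{D_R\setminus\alpha_t\mathcal B_1}\norm{z}_1\rmd z\to0$ in probability, and since $\norm{z}_1\ge\alpha_t>0$ there, $\abs{D_R\setminus\alpha_t\mathcal B_1}\to0$. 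This proves \eqref{eq:range-symmetric-difference}.

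Then \eqref{eq:sharp-range-limit} follows from $\abs{D_R}=R^{-d}\abs{\Rcal_{n_R}}$ and $\bigl|\abs{D_R}-\abs{\alpha_t\mathcal B_1}\bigr|\le\abs{D_R\triangle\alpha_t\mathcal B_1}$, with $\abs{\alpha_t\mathcal B_1}=\varpi_d\alpha_t^d$. For \eqref{eq:inner-hausdorff-main}, fix $\varepsilon>0$ and cover the compact set $\alpha_t\overline{\mathcal B}_1$ by finitely many Euclidean balls $B(z_k,\varepsilon/2)$ with $z_k\in\alpha_t\overline{\mathcal B}_1$. Each set $B(z_k,\varepsilon/2)\cap\alpha_t\mathcal B_1$ has positive volume $v_k$, bounded below by a constant $v(\varepsilon)>0$ because the $\ell^1$ ball is convex with nonempty interior. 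On the event $\abs{\alpha_t\mathcal B_1\setminus D_R}<v(\varepsilon)$, whose probability tends to one, $D_R$ meets every $B(z_k,\varepsilon/2)$. Hence every $z\in\alpha_t\overline{\mathcal B}_1$ lies within distance $\varepsilon$ of $D_R$, which gives \eqref{eq:inner-hausdorff-main}.
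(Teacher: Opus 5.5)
Your proof is correct, but it takes a different route from the paper for \eqref{eq:range-symmetric-difference}, and the obstacle you identify for the outer part does not actually arise. The tightness in Proposition~\ref{thm:weak-limits} is for weak convergence of finite Borel measures, i.e.\ tested against bounded continuous functions; the exponential tail \eqref{eq:tail} is used precisely to get this. Hence $\mu\mapsto\mu(\R^d)$ is continuous and no mass can escape. The paper therefore reads off $\abs{D_R}\to\varpi_d\alpha_t^d$ directly from $\rho=\1_{\alpha_t\mathcal B_1}$. It obtains $\abs{D_R\cap\alpha_t\mathcal B_1}\to\varpi_d\alpha_t^d$ from the Portmanteau theorem, since the boundary of the $\ell^1$ ball is Lebesgue-null. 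It then concludes with $\abs{D_R\mathbin{\triangle}\alpha_t\mathcal B_1}=\abs{D_R}+\varpi_d\alpha_t^d-2\abs{D_R\cap\alpha_t\mathcal B_1}$, and with Lemma~\ref{lem:inner-hausdorff} for \eqref{eq:inner-hausdorff-main}; that lemma is essentially your covering argument.

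Your version works differently. It bounds the inner deficiency through the $L^2_{\mathrm{loc}}$ convergence of $u_R$ to the identified $u$, using that $u_R>0$ forces $z\in D_R$. It bounds the outer excess through the radial-sum estimate of Lemma~\ref{lem:rad}. Both steps are sound, including the $O_{\P}(1/R)$ cube-discretization error and $R^{(d+3)/2}+R^d(\log R)^d=o(R^{d+1})$ for $d\ge2$. Your route never uses the convergence of $\1_{D_R}\,\rmd z$. It is a qualitative form of the rearrangement argument (Lemma~\ref{lem:rear} with Lemma~\ref{lem:rad}) that the paper uses later, in the proof of Proposition~\ref{prop:ext}, to get a rate for $\abs{\Rcal_{n_R}\mathbin{\triangle}B_R}$. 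The paper's route is shorter, being a pure continuous-mapping argument.

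Two minor points: the convergence of $\nu_R$ you state at the outset is never used. Convexity is not needed for $v(\varepsilon)>0$, since a minimum of finitely many positive volumes is positive.
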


\begin{proof}
By Proposition~\ref{thm:weak-limits} and~\ref{thm:identification}, we obtain convergence in distribution of $\1_{D_R}(z)\,\rmd z$ to the deterministic measure $\1_{\alpha_t\mathcal{B}_1}(z)\,\rmd z$. The map $\mu\mapsto\mu(\R^d)$ is continuous for weak convergence of finite measures, using the constant test function $1$.
The boundary of $\alpha_t\mathcal{B}_1$ has Lebesgue measure zero. By the Portmanteau theorem~\cite[Theorem~2.1]{Billingsley}, the map $\mu\mapsto\mu(\alpha_t\mathcal{B}_1)$ is continuous at the limiting measure. We apply the continuous mapping theorem~\cite[Theorem~2.7]{Billingsley} and use the deterministic limits to obtain
\begin{align*}
 \abs{D_R}\rightarrow \varpi_d\alpha_t^d,
 \quad  \abs{D_R\cap \alpha_t\mathcal{B}_1}\rightarrow \varpi_d\alpha_t^d  \quad\text{in probability}.
\end{align*}
Using the fact that $\abs{D_R\mathbin{\triangle}\alpha_t\mathcal{B}_1}  =\abs{D_R}+\varpi_d\alpha_t^d-2\abs{D_R\cap \alpha_t\mathcal{B}_1}$ and $\abs{D_R} =R^{-d}\abs{\Rcal_{n_R}}$, we obtain \eqref{eq:range-symmetric-difference} and \eqref{eq:sharp-range-limit}. Finally, we apply Lemma~\ref{lem:inner-hausdorff} with $D=\alpha_t\mathcal{B}_1$ and $E_j=D_R$ along any sequence $R\to\infty$ to obtain \eqref{eq:inner-hausdorff-main}.
\end{proof}

\subsection{Quantitative lower bound for the range size}

Recall that $u_R$ is the rescaled local time function defined by \eqref{eq:rescaled-local-time} and that $v_n$ is defined by \eqref{def:Poison-N}. We consider $R$ sufficiently large such that $n_R=\lfloor tR^{d+1}\rfloor\geq1$, and retain $N=4n_R+1$, $Q_R$, $v_R^\square$ and $\widetilde v_R$ from \eqref{def:QR}-\eqref{def:vtilde}.  Let $\rho_R$ be the indicator of $\bigcup_{x\in \Rcal_{n_R-1}}R^{-1}(x+[0,1)^d)$.

We choose $\psi\in C_c^\infty(\R^d)$ such that $\psi\geq0$, $\psi(-z)=\psi(z)$, $\int_{\R^d}\psi(z)\rmd z=1$ and $\operatorname{supp}\psi\subset\{z:\norm{z}_2\leq1\}$.
For $r>0$, write $\psi_r(z)=r^{-d}\psi(z/r)$, and set
\begin{align*}
 \lambda=R^{-1/4},\quad \varphi_\lambda=\psi_{\lambda/2}*\psi_{\lambda/2},\quad  u=\varphi_\lambda*\widetilde v_R,\quad  \rho=\varphi_\lambda*\rho_R.
\end{align*}
We set $b_i(z):=\operatorname{sgn}(z_i)$ for $z\in\R^d$, so that $b(z)=\nabla\norm z_1$ almost everywhere. In convolutions involving $u_R$, $\widetilde v_R$, $\rho_R$, or $\rho_Rb$, we use their periodic extensions from $Q_R$, with period $N/R$ in each coordinate. We form the product $\rho_Rb$ on $Q_R$ before extending it periodically. We identify opposite faces of $Q_R$ to regard these functions as functions on a torus.

For an open set $U\subset \R^d$, let $H^1_0(U)$ denotes the closure of $C_c^\infty(U)$ in $H^1(U)$.

\begin{proposition}\label{prop:trans}
Suppose $K\geq1$, $K+4<N/(2R)$, $\Rcal_{n_R-1}\subset\{x:\norm{x}_1<KR\}$ and
\begin{align*}
 &|\Rcal_{n_R-1}|\leq C_0R^d,\quad \int_{Q_R}\bigl((v_R^\square)^2+\abs{\nabla\widetilde v_R}^2\bigr)\rmd z\leq C_0,\\
 &R^{-d-2}\sum_{x\in\{-2n_R,\ldots,2n_R\}^d} \left|\ell_{n_R}(x)-v_{n_R}(x)-\frac{n_R}{N^d}\right|^2 \leq C_0\frac{1+\log R}{R}.
\end{align*}
Then there is a positive constant $C=C(d,\theta,t,\psi,C_0)$ such that, for all sufficiently large $R$,
\begin{align}
 &\begin{aligned}
 \int_{Q_R}u_-^2\rmd z+\int_{Q_R}(1-\rho)u_+^2\rmd z
 &\leq CR^{-1/2},\quad \left|\int_{Q_R}\rho u\rmd z-t\right|
 \leq C\left(R^{-1/4}+\sqrt{\frac{1+\log R}{R}}\right),
 \end{aligned}
 \label{eq:defect}\\
 &\left|\int_{Q_R}(\nabla u+2\theta\rho b)\mathbin{\cdot}\nabla\varphi\rmd z\right|  \leq C(1+K)^{(d-1)/2}R^{-1/8}\norm{\nabla\varphi}_{L^2(\R^d)}
 \label{eq:resid}
\end{align}
for every $\varphi\in H^1_0(\{z:\norm{z}_1<K+3\})$, extended by zero to $\R^d$.
Moreover, $\int_{Q_R}\abs{\nabla u}^2\rmd z\leq C_0$, and
\begin{align}
 \int_{Q_R\cap\{z:\norm{z}_1\geq K+1\}}  \bigl(u^2+\lambda^2\abs{\nabla u}^2\bigr)\rmd z  \leq C\frac{1+\log R}{R}.
 \label{eq:out}
\end{align}
\end{proposition}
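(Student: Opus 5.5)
The plan is to transfer everything to $\widetilde v_R$ and the discrete Poisson equation \eqref{def:Poison-N}, using the hypotheses as the only inputs. Mollification by $\varphi_\lambda$ then costs powers of $\lambda=R^{-1/4}$. Write $w:=u_R-\widetilde v_R$ on $Q_R$. By \eqref{eq:interpolation-error}, the third hypothesis and $n_R^2/N^d\le1$, I expect $\int_{Q_R}w^2\rmd z\le C(1+\log R)/R$. The two needed bounds are the $R^{-2}C_0$ control of $\widetilde v_R-v_R^\square$ and the smallness of the constant $n_R/(N^dR)$ over $Q_R$. Young's inequality on the torus gives $\|\varphi_\lambda*g\|_2\le\|g\|_2$ and $\|\nabla(\varphi_\lambda*g)\|_2\le\min\{\|\nabla g\|_2,\,C\lambda^{-1}\|g\|_2\}$. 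In particular $\int_{Q_R}|\nabla u|^2\le\int_{Q_R}|\nabla\widetilde v_R|^2\le C_0$. I will also use the standard estimate $\|\varphi_\lambda*g-g\|_2\le C\lambda\|\nabla g\|_2$. A key structural fact is that $\ell_{n_R}$ is supported in $\Rcal_{n_R-1}$, so $u_R=\rho_Ru_R\ge0$.

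\emph{Defect bounds \eqref{eq:defect}.} Since $\varphi_\lambda*u_R\ge0$ and $u=\varphi_\lambda*u_R-\varphi_\lambda*w$, I get $\int u_-^2\le\|w\|_2^2\le C(1+\log R)/R\le CR^{-1/2}$. For $(1-\rho)u_+^2$, I will bound $(1-\rho)u_+\le(1-\rho)(\varphi_\lambda*u_R)+|\varphi_\lambda*w|$. Then I write $(1-\rho)(\varphi_\lambda*u_R)$ as $\varphi_\lambda*(\rho_R u_R)-\rho\,(\varphi_\lambda*u_R)$, which is a commutator. Replacing $u_R$ by $\widetilde v_R$ up to the $w$ error, the commutator is pointwise at most the local oscillation of $\widetilde v_R$ at scale $\lambda$. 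Its $L^2$ norm is therefore $\le C\lambda\|\nabla\widetilde v_R\|_2$, giving $C\lambda^2=CR^{-1/2}$.

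For the mass, symmetry of $\varphi_\lambda$ gives $\int\rho u=\int\rho_R(\varphi_\lambda*\varphi_\lambda*\widetilde v_R)$ (with $\varphi_\lambda$ counting as one mollification on each side). The set $\{\rho_R=1\}$ has volume $\le C_0$, so Cauchy--Schwarz gives $\bigl|\int\rho u-\int\rho_Ru_R\bigr|\le C_0^{1/2}\bigl(\|w\|_2+C\lambda\|\nabla\widetilde v_R\|_2\bigr)$. Finally $\int\rho_Ru_R=n_R/R^{d+1}=t+O(R^{-d-1})$.

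\emph{Residual bound \eqref{eq:resid}.} Set $\phi:=\varphi_\lambda*\varphi$ and $f(x):=R\,\phi(x/R)$. First, $\int\nabla u\cdot\nabla\varphi=\int\nabla\widetilde v_R\cdot\nabla\phi$. Since $\widetilde v_R$ is piecewise affine, this equals the discrete Dirichlet form $\frac{R^{-d}}{2}\cdot 2d\,\langle v_{n_R},(I-P_N)f\rangle$ up to Taylor errors. Those errors involve $R^{-1}\|\nabla^2\phi\|_2\le CR^{-1}\lambda^{-1}\|\nabla\varphi\|_2$, which is fine. By \eqref{def:Poison-N} this becomes $R^{-d}d\,\langle a_{n_R},f\rangle$. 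Off the coordinate hyperplanes, $(K-P)f(x)=-\frac{\theta}{d}b(x)\cdot\nabla\phi(x/R)+O(\|\nabla^2\phi\|/R)$, as in the proof of Proposition~\ref{prop:quantitative-equation}. This yields $-2\theta\int\rho_R b\cdot\nabla\phi=-2\theta\int(\varphi_\lambda*(\rho_Rb))\cdot\nabla\varphi$.

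The main obstacle is the two error terms localized near the hyperplanes $\{z_i=0\}$. The first is the lattice sites with a zero coordinate, of rescaled width $1/R$. The second is the difference $\varphi_\lambda*(\rho_Rb)-(\varphi_\lambda*\rho_R)b$, which vanishes outside the slabs $\{|z_i|<\lambda\}$. Inside $\{\|z\|_1<K+3\}$ each slab has volume $\le C(1+K)^{d-1}\lambda$. By Cauchy--Schwarz the error is at most $C\bigl((1+K)^{d-1}\lambda\bigr)^{1/2}\|\nabla\varphi\|_2=C(1+K)^{(d-1)/2}R^{-1/8}\|\nabla\varphi\|_2$, which is exactly the claimed rate. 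I will take care that $\varphi\in H^1_0$ and the support $K+4<N/(2R)$ keep the torus periodization from interfering.

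\emph{Exterior bound \eqref{eq:out}.} On $\{\|z\|_1\ge K+1\}$, the $\varphi_\lambda$-neighbourhood (radius $\lambda<1$) misses $\{\|z\|_1<K\}\supset\operatorname{supp}u_R$. Hence there $u=-\varphi_\lambda*w$ and $\nabla u=-\nabla(\varphi_\lambda*w)$, so both $u^2$ and $\lambda^2|\nabla u|^2$ integrate to at most $C\|w\|_2^2\le C(1+\log R)/R$.
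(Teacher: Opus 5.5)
Your proposal is correct and follows essentially the paper's route. The common steps are the $L^2$ bound on $u_R-\widetilde v_R$, mollifier estimates for the defect and mass bounds, the piecewise-affine Dirichlet form combined with $(I-P_N)v_{n_R}=a_{n_R}$, the Taylor expansion of $q$ with the $\lambda$-slab estimate for \eqref{eq:resid}, and the support argument for \eqref{eq:out}. The one organisational difference is that you treat the $(1-\rho)u_+^2$ term inline through the commutator $\varphi_\lambda*(\rho_Ru_R)-\rho\,(\varphi_\lambda*u_R)$, whereas the paper first passes to $\widetilde v_R$ and then applies Lemma~\ref{lem:conv}.

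Two small corrections. First, the prefactor in front of $\langle v_{n_R},(I-P_N)f\rangle$ is $2dR^{-d}$, not $dR^{-d}$; only with $2dR^{-d}$ do you get the $-2\theta$ you state. Second, the Taylor and interpolation errors involve lattice sums of $\abs{\nabla(\varphi_\lambda*\varphi)}^2$ and $\abs{\nabla^2(\varphi_\lambda*\varphi)}^2$. These must be converted into $\norm{\nabla\varphi}_{L^2(\R^d)}$ by a sampling inequality such as \eqref{eq:samp}--\eqref{eq:der}, which uses $R\lambda\to\infty$. The sup-norm bounds from Proposition~\ref{prop:quantitative-equation} are not enough for a general $\varphi\in H^1_0$.
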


\begin{proof}
The constant function $n_R/(RN^d)$ has squared $L^2(Q_R)$ norm $n_R^2/(R^{d+2}N^d)\leq R^{-d-2}$. By the assumed error bound and \eqref{eq:interpolation-error}, we therefore obtain
\begin{align}
 \norm{\widetilde v_R-u_R}_{L^2(Q_R)}^2  \leq C\frac{1+\log R}{R}.  \label{eq:err}
\end{align}
Since $u_R\geq0$ and $u_R=0$ outside $\{\rho_R=1\}$, we have
\begin{align*}
 \int_{Q_R}(\widetilde v_R)_-^2\rmd z  +\int_{Q_R}(1-\rho_R)(\widetilde v_R)_+^2\rmd z \leq\norm{\widetilde v_R-u_R}_{L^2(Q_R)}^2.
\end{align*}
Applying Lemma~\ref{lem:conv} with $f=\widetilde v_R$, we obtain the first bound in \eqref{eq:defect}. Convolution with a probability density does not increase the $L^2$ norm of the gradient, and thus $\int_{Q_R}\abs{\nabla u}^2\rmd z
 \leq\int_{Q_R}\abs{\nabla\widetilde v_R}^2\rmd z
 \leq C_0$. Since
\begin{align*}
 \int_{Q_R}\rho\rmd z=\int_{Q_R}\rho_R\rmd z =\frac{|\Rcal_{n_R-1}|}{R^d}\leq C_0, \quad \int_{Q_R}\rho_Ru_R\rmd z=\frac{n_R}{R^{d+1}},
\end{align*}
we apply the Cauchy--Schwarz inequality, \eqref{eq:err}, the second assertion of Lemma~\ref{lem:conv}, and $\abs{n_R/R^{d+1}-t}\leq R^{-d-1}$ to obtain
\begin{align*}
 \left|\int_{Q_R}\rho u\rmd z-t\right| &\leq C\lambda+\norm{\rho_R}_{L^2(Q_R)} \norm{\widetilde v_R-u_R}_{L^2(Q_R)}+R^{-d-1}\\
 &\leq C\left(R^{-1/4}+\sqrt{\frac{1+\log R}{R}}\right).
\end{align*}

For large $R$, both $\rho$ and $\varphi_\lambda*u_R$ vanish on $Q_R\cap\{z:\norm{z}_1\geq K+1\}$. Indeed, each cube corresponding to $\Rcal_{n_R-1}$ lies in $\{z:\norm{z}_1<K+d/R\}$, and $\varphi_\lambda$ is supported in $\{z:\norm{z}_2\leq\lambda\}$, and $d/R+\sqrt d\,\lambda<1$ for large $R$. We write
\begin{align*}
 u=\varphi_\lambda*u_R+\varphi_\lambda*(\widetilde v_R-u_R).
\end{align*}
By Minkowski's inequality and translation invariance of integration over $Q_R$, we have
\begin{align*}
 \norm{\varphi_\lambda*(\widetilde v_R-u_R)}_{L^2(Q_R)}
 &\leq\norm{\widetilde v_R-u_R}_{L^2(Q_R)},\\
 \norm{(\nabla\varphi_\lambda)*(\widetilde v_R-u_R)}_{L^2(Q_R)}
 &\leq\norm{\nabla\varphi_\lambda}_{L^1(\R^d)}
 \norm{\widetilde v_R-u_R}_{L^2(Q_R)}.
\end{align*}
Since $\norm{\nabla\varphi_\lambda}_{L^1(\R^d)}\leq C\lambda^{-1}$, these bounds and \eqref{eq:err} imply \eqref{eq:out}.

We next prove \eqref{eq:resid}. Let $\varphi\in C_c^\infty(\{\norm{z}_1<K+3\})$, extend it by zero to $\R^d$, and set $\Phi:=\varphi_\lambda*\varphi$. Let $I_R\Phi$ be the continuous function that is affine on each simplex used to define $\widetilde v_R$ and equals $\Phi(x/R)$ at every vertex $x/R$. Each lattice edge belongs to exactly $d!$ of these simplices. Using $(I-P_N)v_{n_R}=a_{n_R}$, \eqref{eq:poisson-terms}, and summation by parts, we obtain
\begin{align}
 \begin{aligned}
 \int_{Q_R}\nabla\widetilde v_R\mathbin{\cdot}\nabla I_R\Phi\rmd z &=R^{1-d}\sum_{x\in\Z^d}\sum_{i=1}^d \bigl(v_{n_R}(x+e_i)-v_{n_R}(x)\bigr) \bigl(\Phi((x+e_i)/R)-\Phi(x/R)\bigr)\\
 &=2dR^{1-d}\sum_{x\in \Rcal_{n_R-1}}\sum_{y:y\sim x}q(x,y)\Phi(y/R).
 \end{aligned}
 \label{eq:aff}
\end{align}
The sums are finite since $\Phi$ is compactly supported inside $Q_R$ for large $R$.

For $f\in L^2(\R^d)$, using Jensen's inequality, we have that, uniformly in $\zeta\in\R^d$,
\begin{align}
 \begin{aligned}
 R^{-d}\sum_{x\in\Z^d}\abs{(\psi_{\lambda/2}*f)(x/R+\zeta)}^2
 &\leq\int_{\R^d}\abs{f(y)}^2
 \left[R^{-d}\sum_{x\in\Z^d}\psi_{\lambda/2}(x/R+\zeta-y)\right]\rmd y\\
 &\leq C\norm{f}_{L^2(\R^d)}^2.
 \end{aligned}
 \label{eq:samp}
\end{align}
Indeed, the expression in brackets is uniformly bounded since $\psi$ is bounded and supported in the unit ball, and $R\lambda\to\infty$. We apply \eqref{eq:samp} with $f=\psi_{\lambda/2}*\partial_i\varphi$ and $f=(\partial_j\psi_{\lambda/2})*\partial_i\varphi$, for $1\leq i,j\leq d$. By Young's inequality~\cite[Theorem~4.15]{Brezis} and $\norm{\partial_j\psi_{\lambda/2}}_{L^1(\R^d)}\leq C\lambda^{-1}$, we obtain
\begin{align}
 \sup_{\zeta\in\R^d}R^{-d}\sum_{x\in\Z^d} \left(\abs{\nabla\Phi(x/R+\zeta)}^2 +\lambda^2\abs{\nabla^2\Phi(x/R+\zeta)}^2\right)  \leq C\norm{\nabla\varphi}_{L^2(\R^d)}^2.
 \label{eq:der}
\end{align}
On the interior of a simplex containing the edge from $x/R$ to $(x+e_i)/R$, we have
\begin{align*}
 \partial_i I_R\Phi  =R\bigl(\Phi((x+e_i)/R)-\Phi(x/R)\bigr)  =\int_0^1\partial_i\Phi((x+se_i)/R)\rmd s.
\end{align*}
Every coordinate derivative has this form on each simplex.
By the fundamental theorem of calculus, the Cauchy--Schwarz inequality, and \eqref{eq:der}, we therefore obtain
\begin{align}
 \begin{aligned}
 \norm{\nabla I_R\Phi-\nabla\Phi}_{L^2(Q_R)}^2
 &\leq CR^{-2}\sup_{\zeta\in\R^d}R^{-d} \sum_{x\in\Z^d}\abs{\nabla^2\Phi(x/R+\zeta)}^2\leq CR^{-2}\lambda^{-2}\norm{\nabla\varphi}_{L^2(\R^d)}^2.
 \end{aligned}
 \label{eq:itp}
\end{align}

By \eqref{eq:cookie-kernel}, whenever all coordinates of $x$ are nonzero,
\begin{align*}
 \sum_{y:y\sim x}q(x,y)(y-x)=-\frac{\theta}{d}b(x/R).
\end{align*}
Using Taylor's formula, \eqref{eq:der} together with $\sum_{y:y\sim x}\abs{q(x,y)}\leq2$, and $|\Rcal_{n_R-1}|\leq C_0R^d$, we obtain
\begin{align*}
 &R^{1-d}\sum_{x\in \Rcal_{n_R-1}}\sum_{y:y\sim x}\abs{q(x,y)}
 \left|\Phi(y/R)-\Phi(x/R)-\frac1R(y-x)\mathbin{\cdot}\nabla\Phi(x/R)\right|\leq CR^{-1}\lambda^{-1}\norm{\nabla\varphi}_{L^2(\R^d)}.
\end{align*}
At most $C(1+K)^{d-1}R^{d-1}$ vertices of $\Rcal_{n_R-1}$ have a zero coordinate.
Since $\abs{\sum_{y:y\sim x}q(x,y)(y-x)}\leq2$, using the Cauchy--Schwarz inequality and \eqref{eq:der}, we thus get
\begin{align*}
 &R^{-d}\sum_{x\in \Rcal_{n_R-1}}
 \left|\left(\sum_{y:y\sim x}q(x,y)(y-x)+\frac{\theta}{d}b(x/R)\right)
 \mathbin{\cdot}\nabla\Phi(x/R)\right|\\
 &\qquad\leq C(1+K)^{(d-1)/2}R^{-1/2}\norm{\nabla\varphi}_{L^2(\R^d)}.
\end{align*}
On each cube whose vertex has no zero coordinate, $b$ is constant almost everywhere.
The remaining cubes corresponding to $\Rcal_{n_R-1}$ have total volume at most $C(1+K)^{d-1}/R$.
Using the fundamental theorem of calculus, \eqref{eq:der}, and the Cauchy--Schwarz inequality on these remaining cubes, we obtain
\begin{align*}
 &\left|R^{-d}\sum_{x\in \Rcal_{n_R-1}}b(x/R)\mathbin{\cdot}\nabla\Phi(x/R)  -\int_{Q_R}\rho_Rb\mathbin{\cdot}\nabla\Phi\rmd z\right|\leq C\left(R^{-1}\lambda^{-1}  +(1+K)^{(d-1)/2}R^{-1/2}\right)\norm{\nabla\varphi}_{L^2(\R^d)}.
\end{align*}
Combining these bounds with \eqref{eq:aff}, \eqref{eq:itp}, and $\norm{\nabla\widetilde v_R}_{L^2(Q_R)}\leq\sqrt{C_0}$, we obtain
\begin{align}
 &\left|\int_{Q_R}\nabla\widetilde v_R\mathbin{\cdot}\nabla\Phi\rmd z  +2\theta\int_{Q_R}\rho_Rb\mathbin{\cdot}\nabla\Phi\rmd z\right|\leq C\left(R^{-1}\lambda^{-1}  +(1+K)^{(d-1)/2}R^{-1/2}\right)\norm{\nabla\varphi}_{L^2(\R^d)}.
 \label{eq:taylor}
\end{align}

On $Q_R$, the function $\varphi_\lambda*(\rho_Rb)-\rho b$ is bounded and vanishes unless $\abs{z_i}\leq\lambda$ for some $i$ and $\norm{z}_1<K+1$.
Therefore,
\begin{align*}
 \norm{\varphi_\lambda*(\rho_Rb)-\rho b}_{L^2(Q_R)}
 \leq C(1+K)^{(d-1)/2}\lambda^{1/2}.
\end{align*}
By symmetry of $\varphi_\lambda$ and periodic integration, we have
\begin{align*}
 \int_{Q_R}\nabla\widetilde v_R\mathbin{\cdot}\nabla\Phi\rmd z
 &=\int_{Q_R}\nabla u\mathbin{\cdot}\nabla\varphi\rmd z,\quad
 \int_{Q_R}\rho_Rb\mathbin{\cdot}\nabla\Phi\rmd z
 =\int_{Q_R}\bigl(\varphi_\lambda*(\rho_Rb)\bigr)\mathbin{\cdot}\nabla\varphi\rmd z.
\end{align*}
These identities, \eqref{eq:taylor}, and the preceding $L^2(Q_R)$ bound imply \eqref{eq:resid}, since $R^{-1}\lambda^{-1}=R^{-3/4}$ and $\lambda^{1/2}=R^{-1/8}$.
Since $\nabla u,\rho b\in L^2(Q_R;\R^d)$, we extend the estimate to $H^1_0(\{z:\norm{z}_1<K+3\})$ by density.
\end{proof}

\begin{proposition}[Quantitative range size]\label{prop:vol}
For every $0<c<1/16$,
\begin{align*}
 \left(\varpi_d\alpha_t^d-\frac{\abs{\Rcal_{n_R}}}{R^d}\right)_+  =O_{\P}(R^{-c}).
\end{align*}
\end{proposition}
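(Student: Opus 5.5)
The plan is to rerun the identification argument of Proposition~\ref{thm:identification} quantitatively, at fixed large $R$, on the mollified pair $(u,\rho)=(\varphi_\lambda*\widetilde v_R,\varphi_\lambda*\rho_R)$ of Proposition~\ref{prop:trans}. The exact identities of that argument will be replaced by the approximate ones \eqref{eq:defect}--\eqref{eq:out}. The exponent $1/16$ should come from the residual $R^{-1/8}$ in \eqref{eq:resid}, which gets square-rooted when we pass from an energy identity to an $L^2$ estimate for $\nabla u+2\theta\rho b$.

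\emph{Step 1: the hypotheses of Proposition~\ref{prop:trans} hold with high probability.} Take $K:=C_1\log R$. By Proposition~\ref{prop:tail}, $\Rcal_{n_R-1}\subset\{\norm{x}_1<KR\}$ with probability tending to one, and $K+4<N/(2R)$ for large $R$. The bounds $\abs{\Rcal_{n_R-1}}\le C_0R^d$ and the two integral bounds hold with probability at least $1-\delta$ for $C_0=C_0(\delta)$. This follows from Markov's inequality applied to Proposition~\ref{prop:range-moment}, \eqref{eq:cookie-potential-energy}, \eqref{eq:interpolated-gradient}, \eqref{eq:poisson-error-l2} and Lemma~\ref{cor:local-time-second-moment}; for $(v_R^\square)^2$ we combine \eqref{eq:strong-error} with \eqref{eq:interpolation-error}. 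On this event all conclusions of Proposition~\ref{prop:trans} are available. Since $K$ is only logarithmic, the factor $(1+K)^{(d-1)/2}$ costs at most a power of $\log R$.

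\emph{Step 2: approximate energy identities.} First test \eqref{eq:resid} with $\varphi=u\,\chi$, where $\chi$ is a Lipschitz cutoff equal to $1$ on $\{\norm{z}_1\le K+1\}$ and supported in $\{\norm{z}_1<K+3\}$. The cutoff errors are controlled by \eqref{eq:out}, where $\rho$ vanishes. Replacing $u$ by $u_+$, at the cost of the $u_-$ term in \eqref{eq:defect}, this gives
\begin{align*}
\norm{\nabla u}_2^2=-2\theta\int\rho b\mathbin{\cdot}\nabla u+O(R^{-1/8}(\log R)^{C}).
\end{align*}
Next test with $\varphi=\chi\norm{z}_1$. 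Its gradient norm is $O(K^{(d+1)/2})$, still polylogarithmic. This gives
\begin{align*}
-\int b\mathbin{\cdot}\nabla u=2\theta d\int\rho+O(R^{-1/8}(\log R)^C).
\end{align*}
Here I must weight the flux by $\rho$, and this is where $\int(1-\rho)u_+^2\le CR^{-1/2}$ from \eqref{eq:defect} enters. Since $\nabla u$ lives essentially where $u>0$, $\nabla u$ can be transferred to $\rho\nabla u$ with an error of order $R^{-1/4}$ after Cauchy--Schwarz. Expanding the square as in the proof of Proposition~\ref{thm:identification} then yields
\begin{align*}
\norm{\nabla u+2\theta\rho b}_2^2+4\theta^2d\int\rho(1-\rho)=O(R^{-1/8}(\log R)^C).
\end{align*}

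\emph{Step 3: convert to a range lower bound; this is the main obstacle.} We have $\int\rho u=t+o(1)$, $u$ essentially nonnegative, and $\nabla u\approx-2\theta\rho b$ in $L^2$ with error $\epsilon:=R^{-1/16}(\log R)^C$. From these I need
\begin{align*}
\int\rho\ \ge\ \varpi_d\alpha_t^d-C\epsilon.
\end{align*}
The exact model is the cone inequality: if $\nabla u=-2\theta\1_Ab$ and $u\ge0$, then $\int u\le\frac{2\theta\varpi_d}{d+1}(\abs A/\varpi_d)^{(d+1)/d}$, with equality for the $\ell^1$ cone. I would first try the appendix Proposition~\ref{prop:cone}, presumably a stable form of this inequality, applied to $u_+$ with $A=\{\rho>1/2\}$. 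The defect $\int\rho(1-\rho)$ makes $\abs A$ and $\int\rho$ agree up to the error. If a direct stability statement is unavailable, the fallback is layer-cake. Write $\int u_+=\int_0^\infty\abs{\{u>s\}}\,\rmd s$ and bound $\abs{\{u>s\}}$ by the co-area formula along rays in $\ell^1$ direction, using $b\mathbin{\cdot}\nabla u\approx-2\theta d\rho$. The $L^2$ error enters linearly, after Cauchy--Schwarz against the polylogarithmically bounded support. Finally, since $\abs{\Rcal_{n_R}}/R^d\ge\int\rho$ and $\abs{n_R/R^{d+1}-t}\le R^{-d-1}$, we invert the monotone map $\abs A\mapsto\frac{2\theta\varpi_d}{d+1}(\abs A/\varpi_d)^{(d+1)/d}$ at the value $t$, whose preimage is $\varpi_d\alpha_t^d$. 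This gives $(\varpi_d\alpha_t^d-\abs{\Rcal_{n_R}}/R^d)_+=O(\epsilon)$ on the good event, hence $O_{\P}(R^{-c})$ for every $c<1/16$, the logarithms being absorbed.
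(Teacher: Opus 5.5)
Your overall architecture matches the paper's: test \eqref{eq:resid} against a truncated $u$ and against $\chi(\norm{z}_1)\norm{z}_1$, expand a square, finish with Proposition~\ref{prop:cone}, and lose a square root to get $1/16$. Step 1 is also fine. The gap is in Step 2, at exactly the point you flag as ``weighting the flux by $\rho$''.

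To pass from $\int\chi\,b\mathbin{\cdot}\nabla u$ to $\int\rho\,b\mathbin{\cdot}\nabla u$, you need $\int(1-\rho)\,b\mathbin{\cdot}\nabla u$ to be small. However, \eqref{eq:defect} controls $(1-\rho)u_+^2$, not $(1-\rho)\abs{\nabla u}$.
\begin{itemize}
\item On $\{u>s\}$, Cauchy--Schwarz works: Chebyshev gives $\int_{\{u>s\}}(1-\rho)\le Cs^{-2}R^{-1/2}$.
\item On the low level set $\{\abs{u}\le s\}\cap\{0<\rho<1\}$, neither factor is small, and the measure of this set is not controlled a priori.
\end{itemize}
For instance, take a layer of width $\delta$ along the edge of the range in which $\rho\approx1/2$ and $u$ decreases linearly with the slope $\theta$ forced by $\nabla u=-2\theta\rho b$. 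It has $\int(1-\rho)u_+^2\lesssim\delta^3$, so \eqref{eq:defect} allows it up to $\delta\approx R^{-1/6}$. Yet it contributes order $\delta$, not $R^{-1/4}$, to $\int(1-\rho)\,b\mathbin{\cdot}\nabla u$.

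In the exact identification argument this step was supplied by two facts: $\nabla u=0$ almost everywhere on $\{u=0\}$, and $\rho=1$ on $\{u>0\}$. Neither has a direct quantitative analogue for the mollified $u$.

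The same problem reappears in Step 3. Proposition~\ref{prop:cone} is stated with $\Omega=\{w>0\}$, the positivity set of the function itself. For $w=u_+$ you would therefore need $\abs{\{u>0\}}\le\abs{\Rcal_{n_R}}/R^d+o(1)$. This can fail, since the mollified $u$ may be slightly positive on large unvisited regions, and you cannot substitute $A=\{\rho>1/2\}$.

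The missing idea is to truncate at a small positive level before forming any energy identity. The paper sets $w=(\chi u-R^{-1/8})_+$ and $\Omega=\{\chi u>R^{-1/8}\}$.
\begin{itemize}
\item Then $\nabla w=\1_\Omega\nabla(\chi u)$, and Chebyshev with \eqref{eq:defect} gives $\int_\Omega(1-\rho)\le CR^{-1/4}$.
\item This yields $\abs{\Omega}\le\abs{\Rcal_{n_R-1}}/R^d+CR^{-1/4}$.
\item It also makes replacing $\rho b$ by $b$ cheap when \eqref{eq:resid} is tested with $\varphi=w$.
\end{itemize}

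The flux identity is never transferred at the level of gradients. Testing with $\chi(\norm{z}_1)\norm{z}_1$ and integrating by parts gives $\int\chi u\,\rmd(\operatorname{div}b)\approx2\theta d\abs{\Rcal_{n_R-1}}/R^d$, where $\operatorname{div}b$ is twice surface measure on the coordinate hyperplanes. Replacing $\chi u$ by $w$ in this trace integral costs only two things, both $O(R^{-1/8})$ up to powers of $\log R$:
\begin{itemize}
\item $R^{-1/8}$ times a polylogarithmic area;
\item the trace of $\chi u_-$, controlled by \eqref{eq:trace}.
\end{itemize}

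Since $\int b\mathbin{\cdot}\nabla w=-\int w\,\rmd(\operatorname{div}b)$, expanding $\int_\Omega\abs{\nabla w+2\theta b}^2$ then yields \eqref{eq:align}. Together with the mass bound \eqref{eq:mass}, Proposition~\ref{prop:cone} applies to $w$ with its own positivity set $\Omega$ and gives the $R^{-1/16}$ rate.
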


\begin{proof}
We first assume the hypotheses of Proposition~\ref{prop:trans}. We choose a smooth function $\chi:\R\to[0,1]$ that equals one on $[0,K+1]$, vanishes on $[K+2,\infty)$, and has first and second derivatives bounded independently of $K$. We write $\chi u$ for $z\mapsto\chi(\norm{z}_1)u(z)$, and set
\begin{align*}
 w=(\chi u-R^{-1/8})_+,\quad \Omega=\{\chi u>R^{-1/8}\}.
\end{align*}
We extend $\chi u$ and $w$ by zero outside $Q_R$. Thus $w\in H^1(\R^d)$ is nonnegative and supported in $\{\norm{z}_1<K+2\}$. Since $\rho$ vanishes on $Q_R\cap\{z:\norm{z}_1\geq K+1\}$, we have $\rho\chi=\rho$ on $Q_R$. Moreover, \eqref{eq:out} implies
\begin{align}
 \norm{\nabla(\chi u)-\nabla u}_{L^2(Q_R)}  \leq C\lambda^{-1}\sqrt{\frac{1+\log R}{R}}  \leq CR^{-1/8},\quad  \norm{\nabla w}_{L^2(\R^d)}\leq C.
 \label{eq:cut}
\end{align}

Integrals over $\{z_i=0\}$ below use $(d-1)$-dimensional Lebesgue measure; for Sobolev functions, the integrands are their traces on this hyperplane. Using fundamental theorem of calculus in the $i$th coordinate and the Cauchy--Schwarz inequality, we have
\begin{align}
 \int_{\{z_i=0\}}\abs f^2 \rmd z \leq2\norm{f}_{L^2(\R^d)}\norm{\partial_i f}_{L^2(\R^d)}  \quad\text{for compactly supported }f\in H^1(\R^d);
 \label{eq:trace}
\end{align}
see also \cite[Section~9.8, Comment~7]{Brezis}. Since $b_i(z)=\operatorname{sgn}(z_i)$, the measure $\operatorname{div}b$ satisfies
\begin{align*}
 \int_{\R^d}f\,\rmd(\operatorname{div}b)  =2\sum_{i=1}^d\int_{\{z_i=0\}}f\rmd z  \quad\text{for }f\in C_c(\R^d).
\end{align*}
By approximation and \eqref{eq:trace}, we also have $-\int_{\R^d}b\mathbin{\cdot}\nabla f\rmd z=\int_{\R^d}f\,\rmd(\operatorname{div}b)$ for every compactly supported $f\in H^1(\R^d)$.

We apply \eqref{eq:resid} with $\varphi(z)=\chi(\norm{z}_1)\norm{z}_1$. Its gradient has $L^2$ norm at most $C(1+K)^{d/2+1}$, and $\nabla\varphi=b$ almost everywhere on $\{\rho>0\}\cap Q_R$. Writing $r=\norm{z}_1$, we have, in the sense of distributions,
\begin{align*}
 \Delta\varphi=(\chi(r)+r\chi'(r))\operatorname{div}b  +d(2\chi'(r)+r\chi''(r)).
\end{align*}
The terms containing derivatives of $\chi$ are supported where $K+1\leq r\leq K+2$. By \eqref{eq:out}, we have
\begin{align*}
 \norm{r\chi'(r)u}_{L^2(\R^d)}  &\leq C(1+K)\sqrt{\frac{1+\log R}{R}},\quad \norm{\nabla(r\chi'(r)u)}_{L^2(\R^d)} \leq C(1+K).
\end{align*}
Using these bounds, the Cauchy--Schwarz inequality, and \eqref{eq:trace}, we obtain
\begin{align*}
 \left|\int_{\R^d}r\chi'(r)u\,\rmd(\operatorname{div}b)\right|  +d\left|\int_{Q_R}(2\chi'(r)+r\chi''(r))u\rmd z\right|  \leq C(1+K)^{d+1}\left(\frac{1+\log R}{R}\right)^{1/4}.
\end{align*}
Integrating by parts in \eqref{eq:resid}, and using $\int_{Q_R}\rho\rmd z=|\Rcal_{n_R-1}|/R^d$, we therefore obtain
\begin{align}
 \left|\int_{\R^d}\chi u\,\rmd(\operatorname{div}b)
 -2\theta d\,\frac{|\Rcal_{n_R-1}|}{R^d}\right|
 \leq C(1+K)^{d+1}R^{-1/8}.
 \label{eq:radial}
\end{align}

We have
\begin{align*}
 -\int_{\R^d}b\mathbin{\cdot}\nabla w\rmd z
 =\int_{\R^d}w\,\rmd(\operatorname{div}b).
\end{align*}
By \eqref{eq:defect} and \eqref{eq:cut}, we have $\norm{\chi u_-}_{L^2(\R^d)}\leq CR^{-1/4}$ and $\norm{\nabla(\chi u_-)}_{L^2(\R^d)}\leq C$.
Applying \eqref{eq:trace} to $\chi u_-$ and then the Cauchy--Schwarz inequality on $\{z_i=0\}$, we obtain
\begin{align*}
 \int_{\{z_i=0\}}\chi u_-\rmd z
 &\leq C(1+K)^{(d-1)/2}
 \left(\int_{\{z_i=0\}}\abs{\chi u_-}^2\rmd z\right)^{1/2}\leq C(1+K)^{(d-1)/2}R^{-1/8}.
\end{align*}
Since $\abs{\chi u-w}\leq\chi u_-+R^{-1/8}$ on $\{z:\norm z_1<K+2\}$ and both functions vanish outside this set, we obtain
\begin{align*}
 \left|\int_{\R^d}(\chi u-w)\,\rmd(\operatorname{div}b)\right|
 &\leq C(1+K)^{(d-1)/2}R^{-1/8}
 +CR^{-1/8}(1+K)^{d-1}\leq C(1+K)^{d+1}R^{-1/8}.
\end{align*}
Combining this bound with \eqref{eq:radial}, we obtain
\begin{align}
\left|\int_{\R^d}w\,\rmd(\operatorname{div}b)-2\theta d\,\frac{|\Rcal_{n_R-1}|}{R^d}\right|
 \leq C(1+K)^{d+1}R^{-1/8}.
 \label{eq:flux}
\end{align}

On $\Omega$, we have $\chi u>R^{-1/8}$. By \eqref{eq:defect}, we therefore obtain
\begin{align*}
 \int_\Omega(1-\rho)\rmd z  &\leq R^{1/4}\int_{Q_R}(1-\rho)u_+^2\rmd z \leq CR^{-1/4},\quad
 \abs{\Omega} \leq\frac{|\Rcal_{n_R-1}|}{R^d}+CR^{-1/4}.
\end{align*}
We have $\nabla w=\1_\Omega\nabla(\chi u)$ almost everywhere.
We apply \eqref{eq:resid} with $\varphi=w$. Using \eqref{eq:cut} to replace $\nabla u$ by $\nabla(\chi u)$, and the Cauchy--Schwarz inequality to replace $\rho b$ by $b$ on $\Omega$, we obtain
\begin{align*}
 \left|\int_\Omega\abs{\nabla w}^2\rmd z-2\theta\int_{\R^d}w\,\rmd(\operatorname{div}b)\right|
 \leq C(1+K)^{d+1}R^{-1/8}.
\end{align*}
We expand the square and apply \eqref{eq:flux} to obtain
\begin{align*}
 \left|\int_\Omega\abs{\nabla w+2\theta b}^2\rmd z
 -4\theta^2d\left(\abs{\Omega}-\frac{|\Rcal_{n_R-1}|}{R^d}\right)\right|
 \leq C(1+K)^{d+1}R^{-1/8}.
\end{align*}
Since the integral is nonnegative, we have
\begin{align*}
 -C(1+K)^{d+1}R^{-1/8}
 \leq\abs{\Omega}-\frac{|\Rcal_{n_R-1}|}{R^d}
 \leq CR^{-1/4}.
\end{align*}
Combining these bounds, we obtain
\begin{align}
 \left|\abs{\Omega}-\frac{|\Rcal_{n_R-1}|}{R^d}\right|
 +\int_\Omega\abs{\nabla w+2\theta b}^2\rmd z
 \leq C(1+K)^{d+1}R^{-1/8}.
 \label{eq:align}
\end{align}

Since $\rho\chi=\rho$ on $Q_R$, we have $\int_{Q_R}\rho\chi u\rmd z=\int_{Q_R}\rho u\rmd z$.
Splitting the integral over $\Omega$ and $Q_R\setminus\Omega$, we obtain
\begin{align*}
 \int_{\R^d}w\rmd z-\int_{Q_R}\rho u\rmd z
 =\int_\Omega(1-\rho)\chi u\rmd z-R^{-1/8}\abs{\Omega}
 -\int_{Q_R\setminus\Omega}\rho\chi u\rmd z.
\end{align*}
Since $\chi u>R^{-1/8}$ on $\Omega$, we obtain from \eqref{eq:defect} that
\begin{align*}
 0\leq\int_\Omega(1-\rho)\chi u\rmd z
 \leq R^{1/8}\int_{Q_R}(1-\rho)u_+^2\rmd z
 \leq CR^{-3/8}.
\end{align*}
On $Q_R\setminus\Omega$, we have $(\chi u)_+\leq R^{-1/8}$.
By the Cauchy--Schwarz inequality, \eqref{eq:defect}, and $|\Rcal_{n_R-1}|/R^d\leq C_0$, we therefore obtain
\begin{align*}
 \left|\int_{Q_R\setminus\Omega}\rho\chi u\rmd z\right|
 \leq R^{-1/8}\frac{|\Rcal_{n_R-1}|}{R^d}+CR^{-1/4}.
\end{align*}
Consequently,
\begin{align*}
 \left|\int_{\R^d}w\rmd z-\int_{Q_R}\rho u\rmd z\right|
 \leq R^{-1/8}\left(\abs{\Omega}+\frac{|\Rcal_{n_R-1}|}{R^d}\right)
 +CR^{-3/8}+CR^{-1/4}.
\end{align*}
By \eqref{eq:defect} and $\abs{\Omega}\leq C_0+CR^{-1/4}$, we conclude that
\begin{align}
 \left|\int_{\R^d}w\rmd z-t\right|\leq CR^{-1/8}.
 \label{eq:mass}
\end{align}

We apply Proposition~\ref{prop:cone} to $w$.
Since $\abs{\Omega}$ is bounded, \eqref{eq:align} and \eqref{eq:mass} imply
\begin{align*}
 t
 &\leq\frac{2\theta}{d+1}\varpi_d^{-1/d}\abs{\Omega}^{(d+1)/d}
 +C\left(\int_\Omega\abs{\nabla w+2\theta b}^2\rmd z\right)^{1/2}
 +CR^{-1/8}\\
 &\leq\frac{2\theta}{d+1}\varpi_d^{-1/d}
 \left(\frac{|\Rcal_{n_R-1}|}{R^d}\right)^{(d+1)/d}
 +C(1+K)^{d+1}R^{-1/16}.
\end{align*}
Since $t=2\theta \varpi_d\alpha_t^{d+1}/(d+1)$, we thus have
\begin{align*}
 \left(\varpi_d\alpha_t^d-\frac{|\Rcal_{n_R-1}|}{R^d}\right)_+ \leq C(1+K)^{d+1}R^{-1/16}.
\end{align*}

We now choose $K=C_1\log R$. By Proposition~\ref{prop:tail}, we can take $C_1$ sufficiently large that $\P(\Rcal_{n_R-1}\subset\{x:\norm{x}_1<KR\})\to1$. The condition $K+4<N/(2R)$ also holds for large $R$. By Proposition~\ref{prop:range-moment}, Lemma~\ref{cor:local-time-second-moment}, \eqref{eq:local-time-decomposition}, \eqref{eq:poisson-error-l2}, \eqref{eq:interpolation-error}, and \eqref{eq:interpolated-gradient-bound}, we have, for all sufficiently large $R$,
\begin{align*}
 &\E\frac{|\Rcal_{n_R-1}|}{R^d}  +\E\int_{Q_R}\bigl((v_R^\square)^2+\abs{\nabla\widetilde v_R}^2\bigr)\rmd z\\
 &\quad+\frac{R}{1+\log R}\E\left[  R^{-d-2}\sum_{x\in\{-2n_R,\ldots,2n_R\}^d}  \left|\ell_{n_R}(x)-v_{n_R}(x)-\frac{n_R}{N^d}\right|^2 \right]  \leq C(d,\theta,t).
\end{align*}
For every $\varepsilon>0$, Markov's inequality therefore allows us to take $C_0$ sufficiently large that all the hypotheses of Proposition~\ref{prop:trans} hold with probability at least $1-\varepsilon$ for large $R$.
Since $(1+\log R)^{d+1}R^{-1/16}=o(R^{-c})$ for every $0<c<1/16$, we obtain the asserted bound for $\Rcal_{n_R-1}$.
Finally, since $\abs{|\Rcal_{n_R-1}|-\abs{\Rcal_{n_R}}}\leq1$, we obtain the same bound for $\Rcal_{n_R}$.
\end{proof}

\subsection{Exterior local times and an outer bound for the range}

\begin{proposition}\label{prop:ext}
For every fixed $\varepsilon>0$,
\begin{align}
\max_{\substack{z\in\Z^d\\\norm{z}_1\geq(\alpha_t+\varepsilon)R}}\ell_{n_R}(z) =o_{\P}(R/\log R).
 \label{eq:local}
\end{align}
\end{proposition}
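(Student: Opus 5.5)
We start from the Green function representation \eqref{eq:green-representation} at time $n=n_R$:
\begin{align*}
 \ell_{n_R}(z)=V_{B_R}(z)+\bigl(V_{\Rcal_{n_R-1}}(z)-V_{B_R}(z)\bigr)+(G*\vartheta_{n_R})(z)+G(z)-G(z-X_{n_R}),
\end{align*}
and bound each term uniformly over $z$ with $\norm{z}_1\geq(\alpha_t+\varepsilon)R$ and $\norm{z}_1\le C_1R\log R$. Sites with $\norm{z}_1>C_1R\log R$ are irrelevant, since they are unvisited with probability tending to one by Proposition~\ref{prop:tail}. There are then at most $C(R\log R)^d$ relevant $z$.

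The three deterministic-type terms are handled as follows. By Lemma~\ref{lem:extpot}, $\abs{V_{B_R}(z)}\le C$ on this region. By \eqref{eq:Green-inq}, $\abs{G(z)-G(z-X_{n_R})}\le C\log(R\log R)$ on the event $\norm{X_{n_R}}_1\le C_1R\log R$. For the second term, Proposition~\ref{lem:green-representation} gives $\sup_z\abs{V_{\Rcal_{n_R-1}}(z)-V_{B_R}(z)}\le C\abs{\Rcal_{n_R-1}\triangle B_R}^{1/d}$. We need $\abs{\Rcal_{n_R-1}\triangle B_R}=o_{\P}((R/\log R)^d)$, and the naive bound $R^d$ is not enough.

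To get this, we combine Proposition~\ref{prop:vol} with Lemma~\ref{lem:rad}. Write $S=\Rcal_{n_R}$. Proposition~\ref{prop:vol} gives $\abs{B_R\setminus S}-\abs{S\setminus B_R}\le\abs{B_R}-\abs{S}\le O_{\P}(R^{d-c})$ for some $c>0$. Lemma~\ref{lem:rad} gives $\sum_{S\setminus B_R}\norm{x}_1-\sum_{B_R\setminus S}\norm{x}_1=O_{\P}(R^{(d+3)/2}+R^d(\log R)^d)$. Points in $S\setminus B_R$ have norm at least $\alpha_tR$, and points of $B_R\setminus S$ have norm below $\alpha_tR$. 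Hence
\begin{align*}
\sum_{x\in S\setminus B_R}(\norm{x}_1-\alpha_tR)+\sum_{x\in B_R\setminus S}(\alpha_tR-\norm{x}_1)\le \alpha_tR\,O_{\P}(R^{d-c})+O_{\P}(R^{(d+3)/2}+R^d(\log R)^d),
\end{align*}
using the count difference. The left side is a sum of nonnegative terms. For a set $E$ of a given size, the term $\sum_{x\in E}\abs{\norm{x}_1-\alpha_tR}$ is at least of order $\abs E^2/R^{d-1}$, because only $O(R^{d-1})$ lattice points lie in each $\ell^1$-shell. Therefore
\begin{align*}
\abs{S\triangle B_R}^2\le R^{d-1}\,O_{\P}\bigl(R^{d+1-c}+R^{(d+3)/2}+R^d(\log R)^d\bigr),
\end{align*}
which gives $\abs{S\triangle B_R}=O_{\P}(R^{d-c'})$ with $c'>0$ (using $d\ge2$, so $(d+3)/2<d+1$). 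Consequently $\abs{\Rcal_{n_R-1}\triangle B_R}^{1/d}=O_{\P}(R^{1-c'/d})=o_{\P}(R/\log R)$.

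The main obstacle is the martingale term, $\max_z\abs{(G*\vartheta_{n_R})(z)}$. For fixed $z$, $(G*\vartheta_{n_R})(z)=\sum_{j<n_R}\xi_j$ with $\xi_j=G(z-X_{j+1})-\E[G(z-X_{j+1})\mid\F_j]$. Then $\abs{\xi_j}\le C$, and $\E[\xi_j^2\mid\F_j]\le C(1+\norm{z-X_j}_2)^{2-2d}$ by \eqref{eq:Green-inq}. The predictable quadratic variation is therefore at most $C\sum_x\ell_{n_R}(x)(1+\norm{z-x}_2)^{2-2d}$. To bound it, we split $x$ by distance to $z$, using $\ell_{n_R}(x)\le\max\ell$ near $z$ together with $\sum\ell=n_R$ far away.

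A cleaner route is to use the expected bound \eqref{eq:occwt}, namely $\E\ell_{n_R}(x)\le CR\,\P(x\in\Rcal)$, to get an expected quadratic variation of order $R\sum_{x}(1+\norm{z-x}_2)^{2-2d}\le CR\log R$ when $d=2$, and $CR$ when $d\ge3$. We then apply Freedman's inequality on the event that the quadratic variation is at most $R(\log R)^{2}$, obtained via Markov's inequality with a union bound handled by a stopping time. Deviations of size $\delta R/\log R$ then have probability $\exp(-c\delta^2R/(\log R)^4)$. This is summable over the $C(R\log R)^d$ sites. The delicate point is making the quadratic-variation control uniform over all $z$ simultaneously. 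If needed, I would first truncate local times using the crude bound $\max_x\ell_{n_R}(x)\le R\log^2R$ from Proposition~\ref{prop:occupation} and Markov's inequality. With that truncation, the sum over $x$ is deterministically at most $R(\log R)^{3}$ for every $z$, and Freedman's inequality applies uniformly.

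Summing the four bounds yields \eqref{eq:local}.
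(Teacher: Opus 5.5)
Your overall route is the paper's: the Green function representation \eqref{eq:green-representation}, Lemma~\ref{lem:extpot} for $V_{B_R}$, a bound on $\abs{\Rcal_{n_R}\mathbin{\triangle}B_R}$ from Lemma~\ref{lem:rad} and Proposition~\ref{prop:vol} via shell counting, and Freedman's inequality with a union bound over polynomially many sites. One small point in the shell-counting step: you need the a priori bound $\abs{\Rcal_{n_R}}\le C_0R^d$, which holds with probability close to one by Proposition~\ref{prop:range-moment}. For very large $E$ the claim $\sum_{x\in E}\abs{\norm{x}_1-\alpha_tR}\gtrsim\abs E^2/R^{d-1}$ is false, because far shells contain many more than $R^{d-1}$ points. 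Lemma~\ref{lem:rear} builds in exactly this hypothesis.

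\textbf{The gap is the uniform cap on the local times}, which is what makes the quadratic-variation control uniform in $z$. Proposition~\ref{prop:occupation} together with one application of Markov's inequality gives only $\P(\ell_{n_R}(x)>R(\log R)^2)\le C/(\log R)^2$ for each $x$. Even the summed form $\P(\max_x\ell_{n_R}(x)>M)\le\E\,{\rm card}\{x:\ell_{n_R}(x)>M\}\le CR\,\E\abs{\Rcal_{n_R}}/M$ is of order $R^d/(\log R)^2$ when $M=R(\log R)^2$. Neither bound survives a union over the $\asymp R^d$ candidate sites. Your ``cleaner route'' fails for the same reason: Markov's inequality on the expected quadratic variation gives a per-site failure probability that is only an inverse power of $\log R$, and no stopping time repairs the union bound.

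The missing idea is an exponential tail for each local time. You get it by iterating the occupation bound at stopping times, which is allowed because Proposition~\ref{prop:occupation} holds at stopping times. Set $b=\lceil 2Cn_R^{1/(d+1)}\rceil\le C_tR$, and let $\tau$ be the first time $\ell_j(x)$ reaches $(k-1)b$ (capped at $n_R$). Applying Proposition~\ref{prop:occupation} at $\tau$ and then conditional Markov gives $\P(\ell_{n_R}(x)\ge kb)\le\frac12\P(\ell_{n_R}(x)\ge(k-1)b)$, hence $\P(\ell_{n_R}(x)\ge kb)\le2^{-k}$. Take $k\asymp\log R$ with a large constant. Each site then fails with probability at most a large negative power of $R$, and a union bound over the at most $C(1+n_R)^d$ reachable sites gives $\max_x\ell_{n_R}(x)\le C_2R\log R$ with probability tending to one. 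With this cap, the predictable quadratic variation is at most $CR(\log R)^2$ uniformly in $z$. Your Freedman step and the final union bound then go through as you describe, as in the paper.
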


\begin{proof}
Fix $c\in(0,1/16)$. Applying Lemma~\ref{lem:rear} with $r=\alpha_tR$ and $A=\Rcal_{n_R}$, on $\{\abs{\Rcal_{n_R}}\leq C_0R^d\}$ we have
\begin{align*}
 \abs{\Rcal_{n_R}\mathbin{\triangle}B_R}^2
 \leq CR^{d-1}\left(
 \left|\sum_{x\in\Rcal_{n_R}}\norm{x}_1-\sum_{x\in B_R}\norm{x}_1\right|
 +\left(\lceil \alpha_tR\rceil-\frac12\right)
 \bigl(\abs{B_R}-\abs{\Rcal_{n_R}}\bigr)_+\right).
\end{align*}
Notice that $\abs{B_R}=\varpi_d\alpha_t^dR^d+O(R^{d-1})$. By Lemma~\ref{lem:rad} and Proposition~\ref{prop:vol}, the expression in parentheses is $O_{\P}\left(R^{(d+3)/2}+R^d(\log R)^d+R^{d+1-c}\right)
 =O_{\P}(R^{d+1-c}).$
By Proposition~\ref{prop:range-moment}, we can make the probability $\P(\abs{\Rcal_{n_R}}>C_0R^d)$ arbitrarily small by increasing $C_0$. Consequently,
\begin{align}
 \abs{\Rcal_{n_R}\mathbin{\triangle}B_R}
 =O_{\P}(R^{d-c/2}).
 \label{eq:symrate}
\end{align}

We next bound the maximum local time. Fix $n\geq1$, $x\in\Z^d$ and $k\geq1$, and put $b=\lceil2Cn^{1/(d+1)}\rceil$, where $C$ is the constant in Proposition~\ref{prop:occupation}. Let $\tau$ be the minimum of $n$ and the first time $\ell_j(x)$ reaches $(k-1)b$. By Proposition~\ref{prop:occupation},
\begin{align*}
 \E[\ell_n(x)-\ell_\tau(x)\mid\F_\tau]\leq Cn^{1/(d+1)}.
\end{align*}
On $\{\ell_n(x)\geq kb\}$, we have $\tau<n$ and $\ell_\tau(x)=(k-1)b$. Using conditional Markov's inequality and induction, we thus get
\begin{align*}
 \P(\ell_n(x)\geq kb)
 \leq\frac12\P(\ell_n(x)\geq(k-1)b)
 \leq2^{-k}.
\end{align*}
Applying this bound with $n=n_R$, we have $b\leq C_tR$. Since $\norm{X_j}_1\leq j$, there are at most $C(1+n_R)^d$ vertices with positive local time. A union bound therefore allows us to take $C_2$ sufficiently large such that
\begin{align}
 \P\left(\max_{x\in\Z^d}\ell_{n_R}(x)\leq\lceil C_2R\log R\rceil\right)\rightarrow1.
 \label{eq:cap}
\end{align}

For each fixed $z\in\Z^d$, the process $((G*\vartheta_j)(z))_{j\geq0}$ is a martingale. By \eqref{eq:poisson-terms} and \eqref{eq:Green-inq},
\begin{align}
 \begin{aligned}
 (G*\vartheta_{j+1})(z)-(G*\vartheta_j)(z)
 &=G(z-X_{j+1})-\E[G(z-X_{j+1})\mid\F_j],\\
 \operatorname{Var}(G(z-X_{j+1})\mid\F_j)
 &\leq C(1+\norm{z-X_j}_2)^{2-2d}.
 \end{aligned}
 \label{eq:var}
\end{align}
The increments in \eqref{eq:var} are bounded in absolute value by $C$, uniformly in $j$ and $z$. On $\{\max_x\ell_{n_R}(x)\leq\lceil C_2R\log R\rceil\}$, for $\norm{z}_1\leq n_R+1$, we obtain from \eqref{eq:var} and \eqref{eq:norm.inq} that
\begin{align*}
 \sum_{j=0}^{n_R-1}\operatorname{Var}(G(z-X_{j+1})\mid\F_j)
 &\leq CR\log R\sum_{\norm{x}_1\leq n_R}(1+\norm{z-x}_2)^{2-2d}\\
 &\leq CR\log R\log(2+n_R)
 \leq CR(\log R)^2.
\end{align*}
We apply Freedman's inequality~\cite[Theorem~1.6]{Freedman} to both signs of this martingale. For every fixed $\eta>0$, uniformly in $\norm{z}_1\leq n_R+1$,
\begin{align*}
 \P\left(
 \abs{(G*\vartheta_{n_R})(z)}>\frac{\eta R}{\log R},\
 \max_x\ell_{n_R}(x)\leq\lceil C_2R\log R\rceil\right)
 \leq2\exp\left(-\frac{c_\eta R}{(\log R)^4}\right).
\end{align*}
Since $n_R\leq tR^{d+1}$, a union bound over $\{\norm{z}_1\leq n_R+1\}$ and \eqref{eq:cap} imply
\begin{align}
 \max_{\norm{z}_1\leq n_R+1}\abs{(G*\vartheta_{n_R})(z)}
 =o_{\P}(R/\log R).
 \label{eq:noise}
\end{align}

The set $\Rcal_{n_R-1}$ differs from $\Rcal_{n_R}$ by at most one vertex. Using Proposition~\ref{lem:green-representation}, Lemma~\ref{lem:extpot} and \eqref{eq:Green-inq}, we therefore obtain
\begin{align*}
 \max_{\substack{z\in\Z^d\\(\alpha_t+\varepsilon)R\leq\norm{z}_1\leq n_R+1}} \ell_{n_R}(z)  &\leq C\left(1+\abs{\Rcal_{n_R}\mathbin{\triangle}B_R}\right)^{1/d}  +\max_{\norm{z}_1\leq n_R+1}\abs{(G*\vartheta_{n_R})(z)}\\
 &\quad+C_{d,\theta,t,\varepsilon}+C\log(2+n_R)  =o_{\P}(R/\log R),
\end{align*}
where we used \eqref{eq:symrate} and \eqref{eq:noise}. Outside $\{\norm{z}_1\leq n_R+1\}$, the local time vanishes. We conclude \eqref{eq:local}.
\end{proof}

For every nonempty $J\subseteq\{1,\ldots,d\}$, define $r_J(x)=\sum_{i\in J}\abs{x_i}$.

\begin{lemma}\label{lem:level}
Let $a,q$ be positive integers and let $T\leq n$ be a stopping time. Assume that, almost surely, $\ell_T(x)\leq q$ for every $x$ with $r_J(x)\geq a$, and that, for every $j<T$, $r_J(X_j)\geq a$ implies $(X_j)_i\neq0$ for all $i\in J$. For every integer $b\geq a$, set
\begin{align*}
 F_b:=\E\big[{\rm card}\{x\in\Rcal_T:r_J(x)\geq b\}\big].
\end{align*}
Then
\begin{align}
 F_{b+1}\leq\frac{q}{q+2\theta}F_b+\frac{2\theta}{q+2\theta} \quad\text{for } b\geq a.
 \label{eq:level}
\end{align}
\end{lemma}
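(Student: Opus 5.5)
We use a drift (optional stopping) argument for a test function that sees only the level set $\{r_J\geq b+1\}$. Fix $b\geq a$ and set
\begin{align*}
g(x):=(r_J(x)-b)_+ .
\end{align*}
We apply \eqref{eq:test-function-drift} with $f=g$ and sum over $0\leq j<T$. Since $T\leq n$ is a bounded stopping time, optional stopping gives
\begin{align*}
\E g(X_T)-g(X_0)=\E\sum_{j<T}(P-I)g(X_j)+\E\sum_{j<T}\eta_j(K-P)g(X_j).
\end{align*}

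We first compute the two drifts. For $P$: the function $g$ is piecewise linear in each $\abs{x_i}$, $i\in J$. Hence $(P-I)g(x)=0$ unless some coordinate $x_i$ with $i\in J$ vanishes while $r_J(x)\geq b$, or $r_J(x)=b$. In the first case the drift is at most $1/d$. At level $r_J(x)=b$ the drift is $\frac{1}{2d}\cdot\#\{\text{outward moves}\}\leq \frac{\abs{J}}{d}\leq1$, since with all $J$-coordinates nonzero $\abs{J}$ moves increase $r_J$.

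By hypothesis, visits before $T$ to $\{r_J\geq a\}$ have all $J$-coordinates nonzero. So the only positive $P$-drift comes from time spent on $\{r_J=b\}$, where it equals $\abs{J}/(2d)$ per visit.

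For $K$ at a first visit to $x$ with $r_J(x)\geq b+1$ and $x_i\neq0$ for $i\in J$: the bias moves toward the origin by decreasing one coordinate. With probability $\abs{J}/\kappa(x)\geq\abs{J}/d$ that coordinate is in $J$, so $r_J$ drops by $1$. Thus $(K-P)g(x)=-\theta\abs{J}/\kappa(x)\leq-\theta\abs{J}/d$. At $r_J(x)=b$ the bias term removes the outward part. One checks $(K-P)g\leq0$ everywhere.

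Combining these, and using $g(X_0)=g(0)=0$, $g\geq0$:
\begin{align*}
\frac{\theta\abs{J}}{d}\,\E\,{\rm card}\{x\in\Rcal_{T-1}:r_J(x)\geq b+1\}\leq\frac{\abs{J}}{2d}\,\E\sum_{j<T}\1_{\{r_J(X_j)=b\}} .
\end{align*}
Here we bound each first visit with $\kappa(x)\leq d$. This yields
\begin{align*}
2\theta\,\E\,{\rm card}\{x\in\Rcal_{T-1}:r_J(x)\geq b+1\}\leq \E\sum_{x:r_J(x)=b}\ell_T(x).
\end{align*}

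Next we use the local-time cap. Since $b\geq a$, each $x$ with $r_J(x)=b$ has $\ell_T(x)\leq q$, and $\ell_T(x)>0$ only if $x\in\Rcal_{T-1}$. So the right side is at most
\begin{align*}
q\,\E\,{\rm card}\{x\in\Rcal_{T}:r_J(x)=b\}=q(F_b-F_{b+1}).
\end{align*}
Passing from $\Rcal_{T-1}$ to $\Rcal_T$ adds at most the vertex $X_T$, costing an additive $1$. This gives
\begin{align*}
2\theta(F_{b+1}-1)\leq q(F_b-F_{b+1}),
\end{align*}
that is, $(q+2\theta)F_{b+1}\leq qF_b+2\theta$, which is \eqref{eq:level}.

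The main obstacle is the bookkeeping of the $P$-drift of the kinked function $g$. One must verify that it is nonpositive away from $\{r_J=b\}$. This uses exactly the hypothesis that visited sites with $r_J\geq a$ have no zero $J$-coordinate, since $\abs{x_i}$ has positive Laplacian at $x_i=0$. One must also check that the positive contribution at level $b$ is at most $\abs{J}/(2d)$ times the local time there. It is important that the constants $\abs{J}/d$ cancel between the $P$ and $K$ terms. If the factor at level $b$ turns out to be cruder, I would instead use $g=(r_J-b)_+$ smoothed as $\min(\cdot)$ near $b$, or absorb the ratio into $q$.
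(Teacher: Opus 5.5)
Your proof is correct and is essentially the paper's own argument. You use the same test function $(r_J-b)_+$ and the same drift bounds: $\abs{J}/(2d)$ per visit at level $b$, $-\theta\abs{J}/\kappa(x)\leq-\theta\abs{J}/d$ at first visits above $b$, and nothing positive elsewhere thanks to the zero-coordinate hypothesis; then the cap $\ell_T\leq q$ at level $b$ and the additive $1$ for $X_T$ give $2\theta(F_{b+1}-1)\leq q(F_b-F_{b+1})$. The blemishes are only cosmetic. At a vertex with vanishing $J$-coordinates the $P$-drift is $\#\{i\in J:x_i=0\}/d$, not at most $1/d$, but this is irrelevant because such vertices are excluded before $T$, and the fallback in your last paragraph is not needed.
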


\begin{proof}
Fix an integer $b\geq a$ and set $g_b(x)=(r_J(x)-b)_+$. When $r_J(x)>b$ and all coordinates in $J$ are nonzero, \begin{align*}
 (P-I)g_b(x)=0, \quad  (K-I)g_b(x)=-\frac{\theta\abs J}{\kappa(x)}  \leq-\frac{\theta\abs J}{d}.
\end{align*} 
When $r_J(x)=b$ and all coordinates in $J$ are nonzero, we have
\begin{align*}
 (P-I)g_b(x)=\frac{\abs J}{2d},  \quad  (K-I)g_b(x)=\frac{(1-\theta)\abs J}{2d}  \leq\frac{\abs J}{2d}.
\end{align*}
When $r_J(x)<b$, we have $(P-I)g_b(x)=(K-I)g_b(x)=0$. Thus, on $\{j<T\}$, \eqref{eq:walk-transition} implies
\begin{align*}
 \E[g_b(X_{j+1})-g_b(X_j)\mid\F_j]  \leq\frac{\abs J}{2d}\1_{\{r_J(X_j)=b\}}  -\frac{\theta\abs J}{d}\eta_j\1_{\{r_J(X_j)>b\}}.
\end{align*}
We multiply by $\1_{\{j<T\}}$, sum over $0\leq j<n$, and take expectations. Since $g_b(X_0)=0$ and $g_b(X_T)\geq0$, we obtain
\begin{align*}
 2\theta\E\sum_{j<T}\eta_j\1_{\{r_J(X_j)>b\}}  \leq\E\sum_{j<T}\1_{\{r_J(X_j)=b\}}.
\end{align*}
Every vertex of $\Rcal_T$ except possibly $X_T$ is visited before time $T$. Since $b\geq1$, the sum $\sum_{j<T}\eta_j\1_{\{r_J(X_j)>b\}}$ counts each vertex with $r_J(x)>b$ that is visited before $T$ exactly once.
Together with the bound $\ell_T(x)\leq q$, we consequently obtain
\begin{align*}
 2\theta(F_{b+1}-1) &\leq2\theta\E\sum_{j<T}\eta_j\1_{\{r_J(X_j)>b\}}\leq\E\sum_{j<T}\1_{\{r_J(X_j)=b\}} \leq q(F_b-F_{b+1}).
\end{align*}
Rearranging, we obtain \eqref{eq:level}.
\end{proof}

\begin{proof}[Proof of Theorem~\ref{thm:shape}]
We prove by induction on $\abs J$ that, for every nonempty subset $J\subseteq\{1,\ldots,d\}$ and every $\varepsilon>0$,
\begin{align}
 \P\left(\max_{0\leq j\leq n_R}r_J(X_j)>(\alpha_t+\varepsilon)R\right) \rightarrow0.
 \label{eq:outer}
\end{align}
Fix $J$ and $\varepsilon>0$, and choose $a=\lceil(\alpha_t+\varepsilon/2)R\rceil+1$, $h=\lfloor\varepsilon R/2\rfloor-2$. For all sufficiently large $R$, we have $h\geq2$ and $a+h<(\alpha_t+\varepsilon)R$. By \eqref{eq:local} with $\varepsilon/2$ in place of $\varepsilon$, we can choose deterministic positive integers $q_R$ such that
\begin{align}
 q_R=o(R/\log R)\quad\text{and}\quad \P\left(\max_{\norm{x}_1\geq a}\ell_{n_R}(x)>q_R\right)\rightarrow0.
 \label{eq:qcap}
\end{align}
Define
\begin{align*}
 T:= n_R&\wedge \min\{0\leq j<n_R : r_J(X_j)\geq a,\ \ell_j(X_j)\geq q_R\}\\
 & \wedge  \min\{0\leq j<n_R:r_{J\setminus\{i\}}(X_j)\geq a, \text{ for some } i\in J \}.
 \end{align*}
where we use the convention $\min\varnothing=\infty$ and the second minimum is omitted when $\abs J=1$. Before time $T$, every visit to a vertex $x$ with $r_J(x)\geq a$ occurs at a time $j$ with $\ell_j(x)<q_R$, and thus $\ell_T(x)\leq q_R$.
For $\abs J=1$, $r_J(X_j)\geq a$ implies that the coordinate in $J$ is nonzero. For $\abs J>1$, if $j<T$ and $r_J(X_j)\geq a$, a zero coordinate $(X_j)_i$ would imply $r_{J\setminus\{i\}}(X_j)=r_J(X_j)\geq a$, contradicting $j<T$.
Thus both hypotheses of Lemma~\ref{lem:level} hold.

If $T<n_R$, $r_J(X_T)\geq a$, and $\ell_T(X_T)\geq q_R$, then
$\ell_{n_R}(X_T)\geq\ell_{T+1}(X_T)=\ell_T(X_T)+1>q_R$ and  $\norm{X_T}_1\geq r_J(X_T)\geq a$. We therefore have
\begin{align*}
 \P\bigl(T<n_R,\ r_J(X_T)\geq a,\ \ell_T(X_T)\geq q_R\bigr) \leq\P\left(\max_{\norm{x}_1\geq a}\ell_{n_R}(x)>q_R\right) \rightarrow0
\end{align*}
by \eqref{eq:qcap}.
When $\abs J>1$, the induction hypothesis with $\varepsilon/2$ in place of $\varepsilon$ implies, for every $i\in J$,
\begin{align*}
 \P\bigl(T<n_R,\ r_{J\setminus\{i\}}(X_T)\geq a\bigr)  \leq\P\left(\max_{0\leq j\leq n_R}r_{J\setminus\{i\}}(X_j)\geq a\right)  \rightarrow0,
\end{align*}
since $a>(\alpha_t+\varepsilon/2)R$.
By the definition of $T$ and the union bound, we consequently obtain
\begin{align}
 \P(T<n_R)\rightarrow0.  \label{eq:stop}
\end{align}

Let $F_b$ be defined as in Lemma~\ref{lem:level} for the stopping time $T$. By Proposition~\ref{prop:range-moment}, $F_a\leq\E\abs{\Rcal_{n_R}}\leq CR^d$. Iterating \eqref{eq:level} with $q=q_R$, we obtain, for every integer $k\geq0$,
\begin{align*}
 F_{a+k}\leq1+CR^d\left(\frac{q_R}{q_R+2\theta}\right)^k \leq1+CR^d\exp\left(-\frac{2\theta k}{q_R+2\theta}\right).
\end{align*}
Since $r_J(X_0)=0$ and $\abs{r_J(X_{j+1})-r_J(X_j)}\leq1$, reaching $a+h$ by time $T$ requires visiting a distinct vertex at each integer level between $a+\lceil h/2\rceil$ and $a+h$. Hence, by Markov's inequality,
\begin{align*}
 \P\left(\max_{0\leq j\leq T}r_J(X_j)\geq a+h\right) &\leq\frac{F_{a+\lceil h/2\rceil}}{\lfloor h/2\rfloor}\leq\frac{1+CR^d\exp\left(-\frac{\theta h}{q_R+2\theta}\right)} {\lfloor h/2\rfloor} \rightarrow0.
\end{align*}
Here we used $h\sim\varepsilon R/2$ and \eqref{eq:qcap}, which imply $h/((q_R+2\theta)\log R)\to\infty$. Since $a+h<(\alpha_t+\varepsilon)R$, we combine this bound with \eqref{eq:stop} to obtain \eqref{eq:outer}, completing the induction.

We take $J=\{1,\ldots,d\}$. Since $\operatorname{dist}(z,\alpha_t\overline{\mathcal{B}}_1)\leq(\norm{z}_1-\alpha_t)_+$, \eqref{eq:outer} implies
\begin{align*}
 \sup_{x\in\Rcal_{n_R}}\operatorname{dist}(x/R,\alpha_t\overline{\mathcal{B}}_1) \leq\left(\frac{\max_{0\leq j\leq n_R}\norm{X_j}_1}{R}-\alpha_t\right)_+\rightarrow0 \quad\text{in probability}.
\end{align*}
Every point of $D_R$ lies within Euclidean distance $\sqrt d/R$ of $R^{-1}\Rcal_{n_R}$. By \eqref{eq:inner-hausdorff-main}, we therefore obtain
\begin{align*}
 \sup_{z\in \alpha_t\overline{\mathcal{B}}_1}\operatorname{dist}(z,R^{-1}\Rcal_{n_R}) \leq\sup_{z\in \alpha_t\overline{\mathcal{B}}_1}\operatorname{dist}(z,D_R)+\frac{\sqrt d}{R}
 \rightarrow0
 \quad\text{in probability}.
\end{align*}
Combining these two limits, we conclude \eqref{eq:shape}.
\end{proof}

\begin{appendix}
\section{An integral inequality for nonnegative Sobolev functions}

Let $W^{1,1}(\mathbb{R}^d)$ denote the Sobolev space of all locally integrable functions  $u \in L^1(\mathbb{R}^d)$ whose weak partial derivatives $\partial_i u$ also belong to $L^1(\mathbb{R}^d)$ for all $1\le i\le d$. Recall that $\varpi_d:=\frac{2^d}{d!}$.

\begin{proposition}\label{prop:cone}
Let $\theta>0$, and let $w\in H^1(\R^d)$ be nonnegative. Suppose that $\Omega:=\{z\in \R^d :w(z)>0\}$ has finite Lebesgue measure. Then
\begin{align}
 \int_{\R^d}w(z)\rmd z
 \leq\frac{2\theta\varpi_d^{-1/d}}{d+1}\abs{\Omega}^{(d+1)/d}
 +\frac{\varpi_d^{-1/d}}d\abs{\Omega}^{1/2+1/d}
 \left(\int_\Omega\abs{\nabla w+2\theta\nabla\norm z_1}^2\rmd z\right)^{1/2}.
 \label{eq:cone}
\end{align}
\end{proposition}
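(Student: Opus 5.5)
Set $F:=\nabla w+2\theta b$ with $b=\nabla\norm{z}_1$, so that $\nabla w=-2\theta b+F$ almost everywhere on $\Omega$; note that $\nabla w=0$ almost everywhere on $\Omega^c$. The key identity comes from testing against the radial vector field $z$: for nonnegative $w\in W^{1,1}$ with compact support,
\[
\int_{\R^d}w\,\rmd z=-\frac1d\int_{\R^d}z\mathbin{\cdot}\nabla w\,\rmd z,
\]
since $\operatorname{div} z=d$. Because $z\mathbin{\cdot}b(z)=\norm{z}_1$, substituting $\nabla w$ gives
\[
\int w\,\rmd z=\frac{2\theta}{d}\int_\Omega\norm{z}_1\,\rmd z-\frac1d\int_\Omega z\mathbin{\cdot}F\,\rmd z .
\]
This splits the problem into two parts, matching the two terms of \eqref{eq:cone}.

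For the error term, apply Cauchy--Schwarz:
\[
\left|\int_\Omega z\mathbin{\cdot}F\right|\le\Big(\int_\Omega\norm{z}_2^2\Big)^{1/2}\norm{F}_{L^2(\Omega)} .
\]
Here the Euclidean norm is a problem, since the claimed constant $\varpi_d^{-1/d}\abs\Omega^{1/2+1/d}/d$ looks like $\abs\Omega^{1/2}\cdot\sup_{\Omega^*}\norm{z}_1$ on the $\ell^1$ ball $\Omega^*$ of the same volume. I would handle this differently: test instead with the vector field $V(z)=\norm{z}_1 b(z)/\kappa$. Alternatively, use the vector field $V=z$ but bound $\abs{z\mathbin{\cdot}F}$ pointwise with a dual-norm estimate, $\abs{z\mathbin{\cdot}F}\le\norm z_1\norm F_\infty$, which is the wrong direction for $L^2$. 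The cleanest fix is to reduce first to $\Omega$ lying inside an $\ell^1$ ball of controlled radius. More robustly, I would replace the radial argument by a \emph{layer-cake/coarea rearrangement} argument, which is what gives sharp $\ell^1$-ball constants.

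The rearrangement plan runs as follows. Write $\int w=\int_0^\infty\abs{\{w>s\}}\,\rmd s$ and $\mu(s):=\abs{\{w>s\}}\le\abs\Omega$. Bound $\mu$ through an anisotropic isoperimetric inequality for the Wulff shape of the $\ell^\infty$-dual norm. For $g=-\nabla w$, the quantity $-\mu'(s)$ is controlled by $\int_{\{w=s\}}\abs{\nabla w}^{-1}$. Along level sets, use $\nabla w\approx-2\theta b$, whose anisotropic size is measured by $\norm{b}_\infty=1$. The Wulff inequality $\int_{\partial E}\norm{\nu}_\infty\,\rmd\mathcal H^{d-1}\ge d\varpi_d^{1/d}\abs E^{(d-1)/d}$ (equality for $\ell^1$ balls) combines with coarea and Cauchy--Schwarz to produce a differential inequality for $\mu$ whose extremal solution is the cone $2\theta(c-\norm z_1)_+$. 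That cone has mass $\frac{2\theta\varpi_d}{d+1}c^{d+1}$ and support volume $\varpi_dc^d$, which gives exactly the first term of \eqref{eq:cone}. The $F$-perturbation then enters linearly through a Cauchy--Schwarz step, giving the second term with the factor $\abs\Omega^{1/2}\cdot\varpi_d^{-1/d}\abs\Omega^{1/d}/d$.

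A simpler route, worth trying first, is to compare with the Schwarz-type symmetrization $w^\star$ with respect to the $\ell^1$ norm. In the unperturbed case, $\nabla w=-2\theta b$ gives $\norm{\nabla w}_\infty$ in the dual $\ell^\infty$ sense equal to $2\theta$. Pólya--Szegő for anisotropic rearrangement preserves the bound $\norm{\nabla w^\star}_\infty\le2\theta$ in that norm, so $w^\star\le2\theta(c-\norm z_1)_+$ with $\varpi_dc^d=\abs\Omega$. Integrating gives the first term.

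The main obstacle is making the perturbation by $F$ quantitative with the exact constant. I would try Talenti's method: $\frac{d}{ds}\int_{\{w>s\}}\abs{\nabla w}$ via coarea, giving $P_\infty(\{w>s\})\le(-\mu')^{1/2}\big(-\frac{d}{ds}\int_{\{w>s\}}\abs{\nabla w}_{1}^2\big)^{1/2}$. Here the $\ell^1$ norm of $\nabla w$ is $2\theta\,\cdot$ plus an $F$-correction. Integrating in $s$ and using $\int_0^\infty(-\mu')\le\abs\Omega$ then yields the $\abs\Omega^{1/2}\norm F_2$ error term.
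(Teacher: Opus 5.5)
Your proposal never reaches a complete argument, and the route you finally commit to breaks down at the decisive step. The radial identity is a dead end for a more basic reason than the one you give. By the bathtub principle, $\frac{2\theta}{d}\int_\Omega\norm z_1\rmd z\geq\frac{2\theta}{d}\int_{c\mathcal B_1}\norm z_1\rmd z=\frac{2\theta}{d+1}\varpi_d^{-1/d}\abs\Omega^{(d+1)/d}$ with $\varpi_dc^d=\abs\Omega$, so its main term gives a \emph{lower} bound for $\int w$. The symmetrization route only treats $F=0$.

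In the Talenti step, the norm paired with $P_\infty$ must be $\norm{\nabla w}_\infty$, not $\norm{\nabla w}_1$. Since $\norm{2\theta b}_1=2\theta d$, your inequality is true but costs a factor $d$ in the leading constant. More seriously, after Talenti's Cauchy--Schwarz the gradient enters through $\Phi(s):=\int_{\{w>s\}}\norm{\nabla w}_\infty^2\rmd z$. If you then integrate in $s$ using global quantities ($\int_0^\infty(-\mu')\rmd s\leq\abs\Omega$, $\int_0^\infty(-\Phi')\rmd s\leq\Phi(0)$), the sharp constant is out of reach. Take the $\ell^1$-radial profile $w=g(\norm z_1)$ on $c\mathcal B_1$ with $-g'(r)\propto r$, normalized so that $\Phi(0)=4\theta^2\abs\Omega$, which is the cone's energy. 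It satisfies the isoperimetric and Talenti inequalities with equality, yet
\[
\int_{\R^d}w\,\rmd z=\frac{2\theta}{\sqrt{d(d+2)}}\varpi_d^{-1/d}\abs\Omega^{(d+1)/d}>\frac{2\theta}{d+1}\varpi_d^{-1/d}\abs\Omega^{(d+1)/d}.
\]
Nor do you say how a level-set inequality is turned into a bound on $\int_0^\infty\mu(s)\rmd s$.

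You have the right ingredients: the $\ell^\infty$-perimeter with its $\ell^1$-ball Wulff shape, the layer-cake formula, and $\norm b_\infty=1$. What is missing is to stay linear in the gradient and weight the levels by $\mu(s)^{1/d}$. Multiply $d\varpi_d^{1/d}\mu(s)^{(d-1)/d}\leq P_\infty(\{w>s\})$ by $\mu(s)^{1/d}$, integrate, and apply the weighted coarea formula:
\[
d\varpi_d^{1/d}\int_{\R^d}w\,\rmd z\leq\int_0^\infty\mu(s)^{1/d}P_\infty(\{w>s\})\rmd s=\int_\Omega\mu(w(z))^{1/d}\norm{\nabla w(z)}_\infty\rmd z .
\]
Now split pointwise, $\norm{\nabla w}_\infty\leq2\theta+\abs F$.

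The $2\theta$ part is handled by the rank estimate $\int_\Omega\mu(w(z))^{1/d}\rmd z\leq\frac{d}{d+1}\abs\Omega^{(d+1)/d}$. This is the statement $\int_0^\infty\mu^{1/d}\,\rmd(-\mu)\leq\int_0^{\abs\Omega}m^{1/d}\rmd m$, and it automatically accounts for jumps of $\mu$ from flat parts of $w$, which the a.e.\ derivative $\mu'$ misses. The $F$ part is handled by $\mu\leq\abs\Omega$ and Cauchy--Schwarz in $z$.

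This yields exactly both terms of \eqref{eq:cone}, and it is the paper's proof. The coarea step needs $w\in W^{1,1}(\R^d)$, which follows from $\abs\Omega<\infty$ and $\nabla w=0$ a.e.\ off $\Omega$. If you prefer your level-set form, you must separate $2\theta$ from $\abs F$ on each level set before any Cauchy--Schwarz, and then Talenti's inequality becomes unnecessary.
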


\begin{proof}
If $\abs\Omega=0$, the assertion holds because $w=0$ almost everywhere. We therefore assume $\abs\Omega>0$ and define $\mu(s):=\abs{\{w>s\}}$ for $s\geq0$.

For a measurable set $E\subset\R^d$ of finite measure, define its anisotropic perimeter by
\begin{align*}
 P_\infty(E):=\sup\left\{ \int_E\operatorname{div}F(z)\,\rmd z: F\in C_c^1(\R^d;\R^d),\ \norm{F(z)}_1\leq1\ \text{for every }z
 \right\}.
\end{align*}
We apply the anisotropic isoperimetric inequality with the $\ell^1$ unit ball \cite[Theorem~2.3]{FMP} to obtain
\begin{align}
 P_\infty(E)\geq d \varpi_d^{1/d}\abs E^{(d-1)/d}.
 \label{eq:iso}
\end{align}
We have $\nabla w=0$ almost everywhere on $\Omega^c$; see \cite[Section~9.8, Comment~4]{Brezis}. By the Cauchy--Schwarz inequality, we note that
$\norm w_1+\norm{\nabla w}_1\leq\abs\Omega^{1/2}(\norm w_2+\norm{\nabla w}_2)$. Thus $w\in W^{1,1}(\R^d)$. By the coarea formula \cite[equation~(2.22)]{FMP}, with weight $\mu(s)^{1/d}$ on the level $s$, and \eqref{eq:iso}, we obtain
\begin{align}
 \begin{aligned}
 d \varpi_d^{1/d}\int_{\R^d}w(z)\,\rmd z =d \varpi_d^{1/d}\int_0^\infty\mu(s)\rmd s&\leq\int_0^\infty\mu(s)^{1/d}P_\infty(\{w>s\})\rmd s\\
& =\int_\Omega\mu(w(z))^{1/d}\norm{\nabla w(z)}_\infty\,\rmd z.
 \end{aligned}
 \label{eq:coa}
\end{align}
We can justify the weighted coarea identity by applying the coarea formula to the Lipschitz composition $z\mapsto\int_0^{w(z)}\mu(s)^{1/d}\rmd s$.

For $0<s<\abs\Omega$, the sets $\{0<w\leq r\}$ with $\mu(r)>s$ increase with $r$, and their union is $\{z\in\Omega:\mu(w(z))>s\}$. Consequently,
\begin{align*}
 \abs{\{z\in\Omega:\mu(w(z))>s\}}
 =\sup_{\substack{r>0\\\mu(r)>s}}\bigl(\abs\Omega-\mu(r)\bigr)
 \leq\abs\Omega-s.
\end{align*}
We therefore obtain
\begin{align}
 \begin{aligned}
 \int_\Omega\mu(w(z))^{1/d}\,\rmd z  &=\frac1d\int_0^{\abs\Omega}s^{1/d-1}  \abs{\{z\in\Omega:\mu(w(z))>s\}}\rmd s\\
 &\leq\frac1d\int_0^{\abs\Omega}s^{1/d-1}(\abs\Omega-s)\rmd s=\frac{d}{d+1}\abs\Omega^{(d+1)/d}.
 \end{aligned}
 \label{eq:rank}
\end{align}

Since $\norm{\nabla\norm z_1}_\infty=1$ almost everywhere, we have
$\norm{\nabla w(z)}_\infty  \leq2\theta+\abs{\nabla w(z)+2\theta\nabla\norm z_1}$
for almost every $z\in\Omega$. Combining this inequality with \eqref{eq:coa}, \eqref{eq:rank}, $\mu\leq\abs\Omega$, and Cauchy--Schwarz inequality, we obtain
\begin{align*}
 d\varpi_d^{1/d}\int_{\R^d}w(z)\rmd z
 &\leq\frac{2\theta d}{d+1}\abs\Omega^{(d+1)/d}
 +\abs\Omega^{1/d}\int_\Omega\abs{\nabla w+2\theta\nabla\norm z_1}\rmd z\\
 &\leq\frac{2\theta d}{d+1}\abs\Omega^{(d+1)/d}
 +\abs\Omega^{1/2+1/d}
 \left(\int_\Omega\abs{\nabla w+2\theta\nabla\norm z_1}^2\rmd z\right)^{1/2}.
\end{align*}
This implies \eqref{eq:cone}.
\end{proof}

\section{Other useful results}

\begin{lemma}\label{lem:rear}
For every fixed $C_0<\infty$ there is $c=c(d,C_0)>0$ such that, for every $r\geq1$ and every finite $A\subset\Z^d$ with $\abs A\leq C_0r^d$,
\begin{align}
 \sum_{x\in A}\norm{x}_1-\sum_{x\in B}\norm{x}_1
 \geq\left(\lceil r\rceil-\frac12\right)(\abs A-\abs B)
 +\frac{c}{r^{d-1}}\abs{A\mathbin{\triangle}B}^2,
 \label{eq:rearr}
\end{align}
where $B:=\{x\in\Z^d:\norm{x}_1<r\}$.
\end{lemma}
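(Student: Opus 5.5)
Throughout, write $m:=\lceil r\rceil-\tfrac12$, and decompose by layers. Every $x$ with $\norm{x}_1<r$ satisfies $\norm{x}_1\le\lceil r\rceil-1$. Every $x\notin B$ satisfies $\norm{x}_1\ge\lceil r\rceil$. Hence
\begin{align*}
 \sum_{x\in A}\norm{x}_1-\sum_{x\in B}\norm{x}_1-m(\abs A-\abs B)
 =\sum_{x\in A\setminus B}\bigl(\norm{x}_1-m\bigr)+\sum_{x\in B\setminus A}\bigl(m-\norm{x}_1\bigr),
\end{align*}
and every summand on the right is at least $\tfrac12>0$. So the whole task is to show that this sum of positive weights is at least $c\,r^{1-d}\abs{A\mathbin{\triangle}B}^2$. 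The approach is the usual bathtub argument: to minimize a sum of distances to the level $m$ with a prescribed number of points, one packs the points into the layers closest to $m$.

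Set $p:=\abs{A\setminus B}$ and $k:=\abs{B\setminus A}$, so that $\abs{A\mathbin{\triangle}B}=p+k$. The layer $S_j:=\{x:\norm{x}_1=j\}$ has cardinality at most $C(1+j)^{d-1}$, and it suffices to treat the two sums separately.

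\emph{Points of $B\setminus A$.} These lie in layers $j\le\lceil r\rceil-1<r$, each of size at most $C r^{d-1}$ (using $r\ge1$). Hence at least $k/2$ of them lie at $\ell^1$-depth at least $\lfloor k/(2Cr^{d-1})\rfloor$ below $m$. Together with the trivial bound $\tfrac12$ per point, this gives
\[
 \sum_{x\in B\setminus A}\bigl(m-\norm{x}_1\bigr)\ge c\,k\max\{1,k/r^{d-1}\}\ge c\,k^2/r^{d-1}.
\]
When $k/(2Cr^{d-1})<1$, the bound $k/2\ge k^2/(4Cr^{d-1})$ covers that case.

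\emph{Points of $A\setminus B$.} This is where the hypothesis $\abs A\le C_0r^d$ enters, and I expect it to be the main obstacle. The outer layers $S_j$ with $j\ge\lceil r\rceil$ grow like $j^{d-1}$, so the counting above fails if $p$ is so large that points must go out to radius much bigger than $r$. The hypothesis gives $p\le\abs A\le C_0r^d$, so I restrict attention to layers $\lceil r\rceil\le j\le \lceil r\rceil+Lr$ with $L=L(d,C_0)$ fixed. These have size at most $C(1+(1+L)r)^{d-1}\le C_Lr^{d-1}$. Then the same argument gives: at least $p/2$ of the points of $A\setminus B$ either lie beyond the first $\lfloor p/(2C_Lr^{d-1})\rfloor$ layers past $m$ or outside the window, and in either case they sit at distance at least $\min\{p/(2C_Lr^{d-1}),Lr\}$. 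Since $p\le C_0r^d$, we have $p/(C_Lr^{d-1})\le C r$, so choosing $L$ large makes the minimum the first term. Thus
\[
 \sum_{x\in A\setminus B}\bigl(\norm{x}_1-m\bigr)\ge c\,p^2/r^{d-1},
\]
with $c$ depending on $C_0$ and $d$; in fact the window bound suffices with $L$ chosen as a constant times $C_0$.

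Finally, adding the two estimates and using $p^2+k^2\ge\tfrac12(p+k)^2$ yields \eqref{eq:rearr}, with $c=c(d,C_0)$.
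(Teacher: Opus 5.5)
Your proof is correct and follows essentially the paper's argument: rewrite the left side minus $(\lceil r\rceil-\frac12)(\abs A-\abs B)$ as $\sum_{x\in A\mathbin{\triangle}B}\abs{\norm{x}_1-\lceil r\rceil+\frac12}$, with every term at least $\frac12$. Then use that the layers within distance $O(r)$ of the level $\lceil r\rceil-\frac12$ have size $O(r^{d-1})$, so at least half the points lie at distance $\gtrsim\abs{A\mathbin{\triangle}B}/r^{d-1}$, and $\abs A\le C_0r^d$ keeps this distance inside the $O(r)$ window. The differences are cosmetic: the paper treats $A\mathbin{\triangle}B$ at once with a two-sided band of half-width $\abs{A\mathbin{\triangle}B}/(2Cr^{d-1})\le r$, enlarging $C$ where you choose the window parameter $L$ large, whereas you handle $B\setminus A$ and $A\setminus B$ separately and recombine via $p^2+k^2\ge\frac12(p+k)^2$.
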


\begin{proof}
For every integer $k\geq0$, we have
\begin{align*}
 {\rm card}\{x\in\Z^d:\norm{x}_1=k\}
 \leq2^d\binom{k+d-1}{d-1}
 \leq C(1+k)^{d-1}.
\end{align*}
Since $\norm{x}_1$ is an integer, we have $\abs{\norm{x}_1-\lceil r\rceil+\frac12}\geq\frac12$.
For $\frac12\leq u\leq r$, at most $2u+1\leq4u$ integer values of $\norm{x}_1$ satisfy $\abs{\norm{x}_1-\lceil r\rceil+\frac12}\leq u$, and each is at most $3r$.
For $0\leq u<\frac12$, no vertex satisfies this inequality.
Consequently, for $0\leq u\leq r$, we obtain
\begin{align}
 {\rm card}\left\{x\in\Z^d:\abs{\norm{x}_1-\lceil r\rceil+\frac12}\leq u\right\}
 \leq Cr^{d-1}u.
 \label{eq:rearband}
\end{align}
Since $\abs A\leq C_0r^d$ and $\abs B\leq(2r+1)^d$, we take $C=C(d,C_0)$ sufficiently large that \eqref{eq:rearband} holds and $\abs{A\mathbin{\triangle}B}\leq2Cr^d$. Applying \eqref{eq:rearband} at $\abs{A\mathbin{\triangle}B}/(2Cr^{d-1})\leq r$, we obtain
\begin{align*}
 {\rm card}\left\{x\in A\mathbin{\triangle}B:
 \abs{\norm{x}_1-\lceil r\rceil+\frac12}
 \leq\frac{\abs{A\mathbin{\triangle}B}}{2Cr^{d-1}}\right\}
 \leq\frac{\abs{A\mathbin{\triangle}B}}2.
\end{align*}
A vertex belongs to $B$ exactly when its norm is at most $\lceil r\rceil-1$.
We therefore obtain
\begin{align*}
 &\sum_{x\in A}\norm{x}_1-\sum_{x\in B}\norm{x}_1
 -\left(\lceil r\rceil-\frac12\right)(\abs A-\abs B)=\sum_{x\in A\mathbin{\triangle}B}  \abs{\norm{x}_1-\lceil r\rceil+\frac12}  \geq\frac{\abs{A\mathbin{\triangle}B}^2}{4Cr^{d-1}}.
\end{align*}
This implies \eqref{eq:rearr}.
\end{proof}

We write $f_\pm:=\max\{\pm f,0\}$. Unless otherwise specified, all integrals and $L^2$ norms in the next lemma are over $\R^d$.

\begin{lemma}\label{lem:conv}
Let $f\in H^1(\R^d)$, let $\rho$ be measurable with $0\leq\rho\leq1$ and $\int_{\R^d}\rho(x)\rmd x<\infty$, and suppose
\begin{align*}
 \int_{\R^d}f_-(x)^2\rmd x+\int_{\R^d}(1-\rho(x))f_+(x)^2\rmd x\leq\epsilon,
 \quad
 \int_{\R^d}\abs{\nabla f(x)}^2\rmd x\leq\gamma.
\end{align*}
For $\lambda>0$, let $\varphi_\lambda$ be a nonnegative symmetric probability density function supported in $\{y\in\R^d:\norm y_2\leq\lambda\}$, and let $f_\lambda:=\varphi_\lambda*f$ and $\rho_\lambda:=\varphi_\lambda*\rho$.
Then
\begin{align*}
 \int_{\R^d}(f_\lambda(x))_-^2\rmd x+\int_{\R^d}(1-\rho_\lambda(x))(f_\lambda(x))_+^2\rmd x
 &\leq C(\epsilon+\lambda^2\gamma),\\
 \left|\int_{\R^d}\rho_\lambda(x)f_\lambda(x)\rmd x-\int_{\R^d}\rho(x)f(x)\rmd x\right|
 &\leq C\lambda\sqrt{\gamma\int_{\R^d}\rho(x)\rmd x}.
\end{align*}
The same assertions hold on a torus $\mathbb T^d$, with $\R^d$ replaced by $\mathbb T^d$, integrals and convolution taken with respect to its volume measure, and $\varphi_\lambda$ supported in the geodesic ball of radius $\lambda$ centered at the origin.
\end{lemma}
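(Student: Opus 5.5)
Both inequalities are local smoothing estimates, and I would prove them by comparing $f_\lambda$ with $f$ in $L^2$ and then absorbing the error with elementary inequalities. The basic input is the standard mollifier bound $\norm{f_\lambda-f}_2\leq\lambda\norm{\nabla f}_2\leq\lambda\sqrt\gamma$. To obtain it, write $f(x-y)-f(x)=-\int_0^1 y\cdot\nabla f(x-sy)\rmd s$ for smooth $f$, then apply Minkowski's inequality using $\norm y_2\leq\lambda$ on the support of $\varphi_\lambda$, and conclude for $f\in H^1$ by density. On the torus the same computation works verbatim, since translations preserve the volume measure.

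\emph{First inequality.} The map $s\mapsto s_-$ is $1$-Lipschitz, so $(f_\lambda)_-\leq (f_-)_\lambda$ pointwise, because $f_\lambda=\varphi_\lambda*(f_+-f_-)\geq-\varphi_\lambda*f_-$. Young's inequality then gives $\norm{(f_\lambda)_-}_2^2\leq\norm{f_-}_2^2\leq\epsilon$, so this part needs no $\gamma$ at all. For the second term, first use $(f_\lambda)_+\leq (f_+)_\lambda$. Next, $(1-\rho_\lambda)=\varphi_\lambda*(1-\rho)$, and we set $g:=f_+$ with $\norm{\nabla g}_2^2\leq\gamma$. Write $(1-\rho_\lambda)g_\lambda^2\leq 2(1-\rho_\lambda)g^2+2(g_\lambda-g)^2$. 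The last term integrates to at most $2\lambda^2\gamma$. For the first term, $\int(\varphi_\lambda*(1-\rho))g^2=\int(1-\rho)(\varphi_\lambda*g^2)$ by symmetry of $\varphi_\lambda$. I then compare $\varphi_\lambda*g^2$ with $g^2$ pointwise through the translates $g(\cdot-y)$: using $g(x-y)^2\leq 2g(x)^2+2(g(x-y)-g(x))^2$ and averaging over $y$, we get $\int(1-\rho)\varphi_\lambda*g^2\leq 2\int(1-\rho)g^2+2\int\!\!\int\varphi_\lambda(y)(g(x-y)-g(x))^2\rmd y\rmd x\leq 2\epsilon+2\lambda^2\gamma$. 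Here I used $0\leq1-\rho\leq1$, and the translate bound $\norm{g(\cdot-y)-g}_2\leq\norm y_2\sqrt\gamma$ follows from the same fundamental theorem argument. Together these give $C(\epsilon+\lambda^2\gamma)$.

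\emph{Second inequality.} Symmetry of $\varphi_\lambda$ gives $\int\rho_\lambda f_\lambda=\int\rho\,(\varphi_\lambda*\varphi_\lambda*f)$. Then
\begin{align*}
\abs{\int\rho_\lambda f_\lambda-\int\rho f}\leq\norm{\rho}_2\norm{\varphi_\lambda*\varphi_\lambda*f-f}_2\leq 2\lambda\sqrt\gamma\,\norm\rho_2,
\end{align*}
where the last step applies the mollifier bound to the density $\varphi_\lambda*\varphi_\lambda$, which is supported in the ball of radius $2\lambda$. Finally, $\norm\rho_2^2\leq\int\rho$ because $0\leq\rho\leq1$, which is the claimed bound.

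The only delicate points are justifying the pointwise comparisons for general $H^1$ functions and checking the torus version. The first is handled by density of smooth functions and the continuity of all terms in $H^1$; note that $f_+\in H^1$ with $\abs{\nabla f_+}\leq\abs{\nabla f}$. The torus version needs only translation invariance and the fact that geodesic translations by $y$ have length $\norm y_2$. I expect no genuine obstacle beyond this bookkeeping.
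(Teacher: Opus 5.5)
Your proposal is correct and follows the paper's argument essentially step for step. The shared ingredients are the negative part via Jensen/Young, the positive part by comparison with translates $f(\cdot-y)$ using $\norm{f(\cdot-y)-f}_2\leq\norm{y}_2\norm{\nabla f}_2$, and the second inequality via $\int\rho_\lambda f_\lambda=\int\rho\,(\varphi_\lambda*\varphi_\lambda*f)$ with Cauchy--Schwarz and $\norm{\rho}_2^2\leq\int\rho$. The only difference is in the $(1-\rho_\lambda)(f_\lambda)_+^2$ term: you pass through $g=f_+$ (using $f_+\in H^1$) and compare $g_\lambda$ with $g$ and then $g$ with its translates, whereas the paper integrates $(f_\lambda(x))_+^2\leq 2f_+(x-y)^2+2\abs{f_\lambda(x)-f(x-y)}^2$ directly against $\varphi_\lambda(y)(1-\rho(x-y))$.
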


\begin{proof}
By \cite[Proposition~9.3]{Brezis}, for every $y\in\R^d$, we have
\begin{align}
 \norm{f(\cdot-y)-f}_2\leq\norm y_2\norm{\nabla f}_2.
 \label{eq:convtr}
\end{align}
Since $f_\lambda=\int_{\R^d}\varphi_\lambda(y)f(\cdot-y)\rmd y$, expanding the squared norms and using \eqref{eq:convtr}, we obtain
\begin{align}
 \begin{aligned}
 \int_{\R^d}\varphi_\lambda(y)\norm{f_\lambda-f(\cdot-y)}_2^2\rmd y
 &=\int_{\R^d}\varphi_\lambda(y)\norm{f-f(\cdot-y)}_2^2\rmd y-\norm{f_\lambda-f}_2^2\leq\lambda^2\gamma.
 \end{aligned}
 \label{eq:convvar}
\end{align}
By Jensen's inequality, we have $\int_{\R^d}(f_\lambda(x))_-^2\rmd x\leq\int_{\R^d}f_-(x)^2\rmd x$.
For the positive part, we use
\begin{align*}
 (f_\lambda(x))_+^2\leq2f_+(x-y)^2+2\abs{f_\lambda(x)-f(x-y)}^2.
\end{align*}
We multiply this inequality by $\varphi_\lambda(y)(1-\rho(x-y))$ and integrate over $(x,y)\in\R^d\times\R^d$.
Using $0\leq\rho\leq1$ and \eqref{eq:convvar}, we therefore obtain
\begin{align*}
 &\int_{\R^d}(f_\lambda(x))_-^2\rmd x+\int_{\R^d}(1-\rho_\lambda(x))(f_\lambda(x))_+^2\rmd x\\
 &\leq\int_{\R^d}f_-(x)^2\rmd x+2\int_{\R^d}(1-\rho(x))f_+(x)^2\rmd x
 +2\int_{\R^d}\varphi_\lambda(y)\norm{f_\lambda-f(\cdot-y)}_2^2\rmd y\leq2\epsilon+2\lambda^2\gamma.
\end{align*}

Since $0\leq\rho\leq1$, we have $\norm\rho_2^2\leq\int_{\R^d}\rho(x)\rmd x<\infty$.
By symmetry and Fubini's theorem, we have
\begin{align*}
 \int_{\R^d}\rho_\lambda(x)f_\lambda(x)\rmd x =\int_{\R^d}\rho(x)(\varphi_\lambda*\varphi_\lambda*f)(x)\rmd x.
\end{align*}
Since $\varphi_\lambda*\varphi_\lambda$ is a probability density function supported in $\{y\in\R^d:\norm y_2\leq2\lambda\}$, using the Cauchy--Schwarz inequality, Minkowski's inequality, and \eqref{eq:convtr}, we obtain
\begin{align*}
 \left|\int_{\R^d}\rho_\lambda(x)f_\lambda(x)\rmd x-\int_{\R^d}\rho(x)f(x)\rmd x\right|
 &\leq\norm\rho_2\norm{\varphi_\lambda*\varphi_\lambda*f-f}_2\leq2\lambda\sqrt{\gamma\int_{\R^d}\rho(x)\rmd x}.
\end{align*}

On a torus $\mathbb T^d$, we obtain the following estimate by integrating the gradient along shortest geodesics for smooth functions and then using approximation in $H^1(\mathbb T^d)$:
\begin{align*}
 \int_{\mathbb T^d}\abs{f(x-y)-f(x)}^2\rmd x
 \leq\operatorname{dist}_{\mathbb T^d}(y,0)^2\int_{\mathbb T^d}\abs{\nabla f(x)}^2\rmd x,
 \quad y\in\mathbb T^d,
\end{align*}
where $\operatorname{dist}_{\mathbb T^d}$ denotes the geodesic distance.
Since translations preserve volume and the geodesic distance satisfies the triangle inequality, all subsequent calculations remain valid with $\R^d$ replaced by $\mathbb T^d$ and with the same constants.
\end{proof}
\begin{lemma}\label{lem:inner-hausdorff}
Let $D\subset\R^d$ be nonempty, bounded and open, and let $E_j\subset\R^d$ be nonempty measurable sets. If $\abs{E_j\mathbin{\triangle}D}\to0$, then
\begin{align*}
 \sup_{z\in\overline D}\operatorname{dist}(z,E_j)\rightarrow0.
\end{align*}
The same implication holds for convergence in probability.
\end{lemma}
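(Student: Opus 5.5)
The plan is a contradiction argument exploiting the openness of $D$. Fix $\delta>0$ and suppose $\sup_{z\in\overline D}\operatorname{dist}(z,E_j)>\delta$ for infinitely many $j$. For each such $j$ we choose $z_j\in\overline D$ with $\operatorname{dist}(z_j,E_j)>\delta$. This means the open Euclidean ball $B(z_j,\delta)$ does not meet $E_j$. We then aim to show that $B(z_j,\delta)\cap D$ has volume bounded below uniformly in $j$. Once that is established, the contradiction is immediate, because $B(z_j,\delta)\cap D\subset D\setminus E_j\subset E_j\mathbin{\triangle}D$, whereas $\abs{E_j\mathbin{\triangle}D}\to0$.

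The main step is the uniform lower bound
\begin{align*}
 m(\delta):=\inf_{z\in\overline D}\abs{B(z,\delta)\cap D}>0.
\end{align*}
We get this from compactness. Because $D$ is bounded, $\overline D$ is compact. The map $z\mapsto\abs{B(z,\delta)\cap D}$ is continuous, and in fact Lipschitz: moving the center by $h$ changes the ball by a set of volume $O(\norm h_2)$, and dominated convergence gives the same conclusion. So the infimum is attained at some $z^*\in\overline D$. The ball $B(z^*,\delta)$ contains a point $y\in D$, since $z^*$ lies in the closure of $D$. As $D$ is open, some small ball around $y$ lies in $D$, and that ball meets $B(z^*,\delta)$ in an open nonempty set. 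Hence $\abs{B(z^*,\delta)\cap D}>0$. (Alternatively, we can avoid continuity: cover $\overline D$ by finitely many balls $B(w_k,\delta/2)$ with $w_k\in\overline D$. Each $B(w_k,\delta/2)\cap D$ is open and nonempty, so it has positive measure, and every ball $B(z,\delta)$ with $z\in\overline D$ contains one of them.) I would present the finite-cover version because it is the cleanest.

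Combining the two steps, we get $\abs{E_j\mathbin{\triangle}D}\geq m(\delta)>0$ for infinitely many $j$, which contradicts the hypothesis. For the version with convergence in probability, the argument above gives a deterministic implication: if $\sup_{z\in\overline D}\operatorname{dist}(z,E)>\delta$, then $\abs{E\mathbin{\triangle}D}\geq m(\delta)$. Therefore $\P(\sup_{z\in\overline D}\operatorname{dist}(z,E_j)>\delta)\leq\P(\abs{E_j\mathbin{\triangle}D}\geq m(\delta))\to0$. Measurability of the supremum, if it is needed, can be handled by restricting to a countable dense subset of $\overline D$, since $\operatorname{dist}(\cdot,E_j)$ is $1$-Lipschitz. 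I do not expect a real obstacle anywhere. The only point needing care is the uniform positivity of $m(\delta)$, which is exactly where openness of $D$ enters: it rules out points of $\overline D$ around which $D$ is thin.
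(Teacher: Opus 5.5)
Your proposal is correct and takes essentially the same route as the paper. Both arguments use a uniform positive lower bound on $\abs{D\cap B(z,\delta)}$ over $z\in\overline D$, obtained from compactness of $\overline D$ and openness of $D$, together with the inclusion $D\cap B(z,\delta)\subset D\setminus E_j$ and the same deterministic implication for the probabilistic version. Your preferred finite-cover argument is a valid replacement for the paper's continuity-plus-compactness step, which you also mention.
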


\begin{proof}
Fix $\varepsilon>0$. By dominated convergence, the function $z\mapsto\abs{D\cap\{y:\norm{y-z}_2<\varepsilon\}}$ is continuous. For every $z\in\overline D$, the set $D\cap\{y:\norm{y-z}_2<\varepsilon\}$ is nonempty and open, and thus its Lebesgue measure is positive. Since $\overline D$ is compact, we have
\begin{align}
 \inf_{z\in\overline D}
 \abs{D\cap\{y:\norm{y-z}_2<\varepsilon\}}>0.
 \label{eq:invol}
\end{align}
If $\sup_{z\in\overline D}\operatorname{dist}(z,E_j)>\varepsilon$, we choose $z\in\overline D$ with $\operatorname{dist}(z,E_j)>\varepsilon$. Then $D\cap\{y:\norm{y-z}_2<\varepsilon\}\subset D\setminus E_j$, and thus  $\abs{E_j\mathbin{\triangle}D}$ is at least the positive infimum in \eqref{eq:invol}. We therefore obtain the deterministic assertion. For random sets, we have
\begin{align*}
 \P\left(\sup_{z\in\overline D}\operatorname{dist}(z,E_j)>\varepsilon\right)
 \leq\P\left(
 \abs{E_j\mathbin{\triangle}D}
 \geq\inf_{z\in\overline D}
 \abs{D\cap\{y:\norm{y-z}_2<\varepsilon\}}
 \right)\rightarrow0.
\end{align*}
We obtain the assertion in probability.
\end{proof}

\end{appendix}

\section*{Acknowledgements}

The author was partially supported by the Australian Research Council under grant ARC DP230102209. The author would like to thank Andrea Collevecchio for literature suggestions.


\begin{thebibliography}{99}

\bibitem{ABK2008}
G. Amir, I. Benjamini and G. Kozma, Excited random walk against a wall, Probab. Theory Related Fields {\bf 140} (2008), no.~1-2, 83--102; \MR{2357671}

\bibitem{ABO}
G. Amir, N. Berger and T. Orenshtein, Zero-one law for directional transience of one dimensional excited random walks, Ann. Inst. Henri Poincar\'e{} Probab. Stat. {\bf 52} (2016), no.~1, 47--57; \MR{3449293}

\bibitem{AHR2023}
O. Angel, M.~P. Holmes and A.~F. Ramirez, Balanced excited random walk in two dimensions, Ann. Probab. {\bf 51} (2023), no.~4, 1421--1448; \MR{4597323}

\bibitem{BasS1}
A.-L. Basdevant and A. Singh, On the speed of a cookie random walk, Probab. Theory Related Fields {\bf 141} (2008), no.~3-4, 625--645; \MR{2391167}

\bibitem{BasS2}
A.-L. Basdevant and A. Singh, Rate of growth of a transient cookie random walk, Electron. J. Probab. {\bf 13} (2008), no. 26, 811--851; \MR{2399297}

\bibitem{BS2009}
A.-L. Basdevant and A. Singh, Recurrence and transience of a multi-excited random walk on a regular tree, Electron. J. Probab. {\bf 14} (2009), no. 55, 1628--1669; \MR{2525106}

\bibitem{B2011}
V. Beffara, \textit{Mécanique Statistique et Criticalité En Dimension Deux}, Habilitation Thesis, École Normale Supérieure de Lyon, Lyon, 2011.

\bibitem{BW2003}
I. Benjamini and D.~B. Wilson, Excited random walk, Electron. Comm. Probab. {\bf 8} (2003), 86--92; \MR{1987097}

\bibitem{BerestyckiCerf}
N. Berestycki and R. Cerf, The random walk penalised by its range in dimensions $d\geqslant3$, Ann. H. Lebesgue {\bf 4} (2021), 1--79; \MR{4213155}

\bibitem{Billingsley}
P.~P. Billingsley, {\it Convergence of probability measures}, second edition, 
Wiley Series in Probability and Statistics, Wiley, New York, 1999; \MR{1700749}

\bibitem{BiskupProcaccia}
M. Biskup and E.~B. Procaccia, Eigenvalue versus perimeter in a shape theorem for self-interacting random walks, Ann. Appl. Probab. {\bf 28} (2018), no.~1, 340--377; \MR{3770879}

\bibitem{Bol94}
E. Bolthausen, Localization of a two-dimensional random walk with an attractive path interaction, Ann. Probab. {\bf 22} (1994), no.~2, 875--918; \MR{1288136}

\bibitem{BPEuler}
A. Bou-Rabee and Y. Peres,
Eulerian walkers on $\Z^2$ have range exponent $2/3$,
\arXiv{2608.23545} [math.PR], 2026.

\bibitem{BP}
A. Bou-Rabee and Y. Peres, Once-reinforced random walk on $\Z^d$ has range exponent at least $d/(d+1)$, \arXiv{2610.00090} [math.PR], 2026.

\bibitem{Brezis}
H.~R. Brezis, {\it Functional analysis, Sobolev spaces and partial differential equations}, Universitext, Springer, New York, 2011; \MR{2759829}

\bibitem{CHN2021}
A. Collevecchio, K. Hamza and T.-M. Nguyen, Long range one-cookie random walk with positive speed, Stochastic Process. Appl. {\bf 142} (2021), 462--478; \MR{4321331}

\bibitem{CollevecchioTarres}
A. Collevecchio and P. Tarr\`es, Once-reinforced and self-interacting random walks beyond exchangeability, \arXiv{2509.04318} [math.PR], 2025.

\bibitem{CZ2007}
F. M. Comets and M. Zerner,  Non-classical interacting random walks, Oberwolfach Rep. {\bf 4} (2007), no.~2, 1521--1574; \MR{2408017}

\bibitem{D1990}
B. Davis, Reinforced random walk, Probab. Theory Related Fields {\bf 84} (1990), no.~2, 203--229; MR1030727

\bibitem{DP2017}
B. Davis and J.~R. Peterson, Excited random walks with non-nearest neighbor steps, J. Theoret. Probab. {\bf 30} (2017), no.~4, 1255--1284; \MR{3736173}

\bibitem{DGHS}
A. Dembo, P. Groisman, R. Huang, and V. Sidoravicius, Averaging principle and shape theorem for a growth model with memory, Comm. Pure Appl. Math. {\bf 74} (2021), no.~7, 1453--1492; \MR{4259685}

\bibitem{DFSX20}
J. Ding, R. Fukushima, R. Sun, and C. Xu, Geometry of the random walk range conditioned on survival among Bernoulli obstacles, Probab. Theory Related Fields {\bf 177} (2020), no.~1-2, 91--145; \MR{4095012}

\bibitem{DK2012}
D. Dolgopyat and E. Kosygina, Scaling limits of recurrent excited random walks on integers, Electron. Commun. Probab. {\bf 17} (2012), no. 35, 14 pp.; \MR{2965748}

\bibitem{ElboimKozma}
D. Elboim and G. Kozma, Once-reinforced random walk in high dimensions, \arXiv{2601.17972} [math.PR], 2026.

\bibitem{FMP}
A. Figalli, F. Maggi and A. Pratelli, A mass transportation approach to quantitative isoperimetric inequalities, Invent. Math. {\bf 182} (2010), no.~1, 167--211; \MR{2672283}

\bibitem{Freedman}
D.~A. Freedman, On tail probabilities for martingales, Ann. Probability {\bf 3} (1975), 100--118; \MR{0380971}

\bibitem{LBG92}
G.~F. Lawler, M.~D. Bramson and D. Griffeath, Internal diffusion limited aggregation, Ann. Probab. {\bf 20} (1992), no.~4, 2117--2140; \MR{1188055}

\bibitem{LawlerLimic}
G.~F. Lawler and V. Limic, {\it Random walk: a modern introduction}, Cambridge Studies in Advanced Mathematics, 123, Cambridge Univ. Press, Cambridge, 2010; \MR{2677157}

\bibitem{LN}
D.-B. Le and T.-M. Nguyen,
Once-excited random walks on general trees,
\arXiv{2602.16934} [math.PR], 2026.

\bibitem{LP09}
L. Levine and Y. Peres, Strong spherical asymptotics for rotor-router aggregation and the divisible sandpile, Potential Anal. {\bf 30} (2009), no.~1, 1--27; \MR{2465710}

\bibitem{LP10}
L. Levine and Y. Peres, Scaling limits for internal aggregation models with multiple sources, J. Anal. Math. {\bf 111} (2010), 151--219; \MR{2747064}


\bibitem{Hofstad2010}
R. van~der~Hofstad and M.~P. Holmes, Monotonicity for excited random walk in high dimensions, Probab. Theory Related Fields {\bf 147} (2010), no.~1-2, 333--348; \MR{2594356}

\bibitem{KSU2026}
R. Kaiser, E. Sava-Huss, and J. {\"U}berbacher, Limit shape, avalanches, and stabilisation of abelian sandpiles on comb lattices, preprint, 2026, \arXiv{2609.21359} [math.PR].

\bibitem{KM2011}
E. Kosygina and T.~S. Mountford, Limit laws of transient excited random walks on integers, Ann. Inst. Henri Poincar\'e{} Probab. Stat. {\bf 47} (2011), no.~2, 575--600; \MR{2814424}

\bibitem{KMP2022}
E. Kosygina, T.~S. Mountford and J.~R. Peterson, Convergence of random walks with Markovian cookie stacks to Brownian motion perturbed at extrema, Probab. Theory Related Fields {\bf 182} (2022), no.~1-2, 189--275; \MR{4367948}

\bibitem{KP2017}
E. Kosygina and J.~R. Peterson, Excited random walks with Markovian cookie stacks, Ann. Inst. Henri Poincar\'e{} Probab. Stat. {\bf 53} (2017), no.~3, 1458--1497; \MR{3689974}

\bibitem{KZ2013}
E. Kosygina and M.~P.~W. Zerner, Excited random walks: results, methods, open problems, Bull. Inst. Math. Acad. Sin. (N.S.) {\bf 8} (2013), no.~1, 105--157; \MR{3097419}

\bibitem{Kozma}
G. Kozma, Reinforced random walk, in {\it European Congress of Mathematics}, 429--443, Eur. Math. Soc., Z\"urich, 2014; \MR{3469136}

\bibitem{Menshikov2012}
M.~V. Menshikov et al., On a general many-dimensional excited random walk, Ann. Probab. {\bf 40} (2012), no.~5, 2106--2130; \MR{3025712}

\bibitem{N2022}
T.-M. Nguyen, Speed of excited random walks with long backward steps, J. Stat. Phys. {\bf 188} (2022), no.~1, Paper No. 3, 29 pp.; \MR{4419336}

\bibitem{P2012}
J.~R. Peterson, Large deviations and slowdown asymptotics for one-dimensional excited random walks, Electron. J. Probab. {\bf 17} (2012), no. 48, 24 pp.; \MR{2946155}

\bibitem{V2003}
S.~E. Volkov, Excited random walk on trees, Electron. J. Probab. {\bf 8} (2003), no. 23, 15 pp.; \MR{2041824}

\bibitem{Z2006}
M.~P.~W. Zerner, Recurrence and transience of excited random walks on $\mathbb Z^d$ and strips, Electron. Comm. Probab. {\bf 11} (2006), 118--128; \MR{2231739}

\bibitem{Z2005}
M.~P.~W. Zerner, Multi-excited random walks on integers, Probab. Theory Related Fields {\bf 133} (2005), no.~1, 98--122; \MR{2197139}

\end{thebibliography}
\end{document}